\documentclass[12pt]{amsart}
\usepackage{amsmath, amssymb, amsthm,latexsym,overpic}
\usepackage{graphicx}
\usepackage{enumerate,mathtools}
\usepackage{enumitem}
%
\usepackage[pdftex]{hyperref}

\title[Uniformly branching  trees]{Uniformly branching  trees}%
\author{Mario Bonk}\thanks{M.B.\ was partially supported by NSF grant DMS-1808856.}
\author{Daniel Meyer}
\address{Department of Mathematics, University of California, 
Los Angeles, CA 90095, USA}
\email{mbonk@math.ucla.edu}

\address{Department of Mathematical Sciences, University of Liverpool,
Mathematical Sciences Building,  Liverpool L69 7ZL,  United Kingdom}
\email{dmeyermail@gmail.com}

\date{\today}

\setenumerate{itemsep=3pt,topsep=3pt}
\setenumerate[1]{label=\upshape(\roman*)}

\newcounter{mylistnum}


\newcommand{\mybar}[1]{\makebox[0pt]{$\phantom{#1}\overline{\phantom{#1}}$}#1}

\newcommand\C{{\mathbb C}}

\newcommand\N{{\mathbb N}}

\newcommand\dist{\operatorname{dist}}
\newcommand\diam{\operatorname{diam}}
\newcommand\id{\operatorname{id}}
\newcommand\inte{\operatorname{int}}

\renewcommand\:{\colon}
\newcommand\sub {\subset}
\newcommand\ra {\rightarrow}

\def\length{\mathop{\mathrm{length}}}

\newcommand\la{\lambda}

\newcommand\no{\noindent} 
 
\newcommand{\be}{e}

\providecommand{\abs}[1]{\lvert#1\rvert}

%

\newcommand{\X} {\mathbf{X}}
\newcommand{\Y} {\mathbf{Y}}

\newcommand{\V} {\mathbf{V}}


\newcommand{\qT} {\mathbf{T}}

\newcommand{\cT} {\mathbb{T}}


\newcommand{\cX} {\mathbb{X}}

\newcommand{\cV} {\mathbb{V}}

\newcommand{\A}{\mathcal{A}}

\newtheorem{theorem}{Theorem}[section]
\newtheorem{problem}[theorem]{Problem}

\newtheorem{proposition}[theorem]{Proposition}
\newtheorem{cor}[theorem]{Corollary}

\newtheorem{lemma}[theorem]{Lemma}

\theoremstyle{definition}

\newtheorem{definition}[theorem]{Definition}

\theoremstyle{remark}

\numberwithin{equation}{section}

\begin{document}

\abstract{A  quasiconformal tree $T$ is a (compact) metric tree that is
  doubling and of bounded turning. We call $T$ trivalent
  if every branch point of  $T$ has exactly three branches. If
  the set of branch points is uniformly relatively separated and
  uniformly relatively dense, we say that  $T$ is uniformly
    branching. We prove that a metric space $T$ is
  quasisymmetrically equivalent to the continuum
    self-similar tree   if and only if it is a trivalent
 quasiconformal tree that is uniformly branching. In particular, 
 any  two
   trees of this type are
  quasisymmetrically equivalent.}
%
  
\endabstract

\maketitle

\tableofcontents

\section{Introduction}\label{s:Intro}

\no 
This paper can be seen as part of  a program promoted by the present authors and other researchers to 
understand the geometry of low-dimen\-sio\-nal fractals, in particular fractals that arise in some natural dynamical settings such as 
limits sets of Kleinian groups, Julia sets of rational maps, or attractors of iterated function systems. One wants to characterize these spaces up to a natural  equivalence given by homeomorphisms with good geometric control. 

 A relevant class
 in this context are homeomorphisms that distort relative distances in a controlled way, namely {\em quasisymmetric} homeomorphisms (for a review of the relevant 
 definition see Section~\ref{sec:appr-subd}; general background on quasisymmetries and related concepts can be found in \cite{He}). We call two metric spaces $S$ and $T$ {\em quasisymmetrically equivalent} if there exists a quasisymmetric homeomorphisms  
 from  $S$ onto $T$. 
In this case, the spaces $S$ and $T$ are obviously homeomorphic.  Quasisymmetric equivalence gives a stronger notion of equivalence of metric spaces that respects not only topological properties
of the spaces, but their {\em quasiconformal geometry}. This term 
\ refers to geometric properties of a space that are of a robust scale-invariant nature. Mostly, these conditions  do not involve 
{\em absolute} distances, but rather {\em relative} distances, i.e., 
 ratios of distances.  Typically, such conditions are invariant under quasisymmetries.

The problem of quasisymmetric equivalence   has been studied for various types of spaces (see \cite{Bo06} for a general overview). For example, David and Semmes \cite{DS}  gave a  characterization of the standard $1/3$-Cantor set
up to quasisymmetric equivalence. A similar characterzation for the unit interval $[0,1]$ or the unit circle is due to Tukia and V\"ais\"al\"a \cite{TV} (see 
 also \cite{Ro, Me11, HM}). 
The problem of characterizing the standard $2$-sphere up to quasisymmetric equivalence is particularly interesting because of the connection of this problem to questions in complex dynamics and geometric group theory (see \cite{BM}  for more discussion on  the background  and references to the literature).

The goal of the present paper is to investigate the quasiconformal geometry of a particular 
 tree-like space with a self-similar structure and  very regular branching behavior, name\-ly 
the {\em continuum self-similar tree} $\cT$ (abbreviated as CSST). 
Under this name,   it was introduced   in \cite{BT}
and defined    as the  attractor of an iterated function system
in the complex plane (see Figure~\ref{fig:CSST} for an illustration). 
Sets homeomorphic to $\cT$ were considered  in the literature  before in various contexts (see  \cite{BT} for more discussion and references).

The major novelty of this paper is the insight that in  order to give a characterization of the CSST up to quasisymmetric equivalence, a new condition is relevant that gives quantitative control for how  branch points in a (metric) tree are  
 spread out. We call such trees {\em uniformly branching} (see Definition~\ref{def:unif_branching} below, which is based on Definitions~\ref{def:unif_sepa} and~\ref{def:unif_dense}).
\begin{figure}
 \begin{overpic}[ scale=0.7
 , clip=true, trim=0mm 15mm 0mm 0mm
    ]{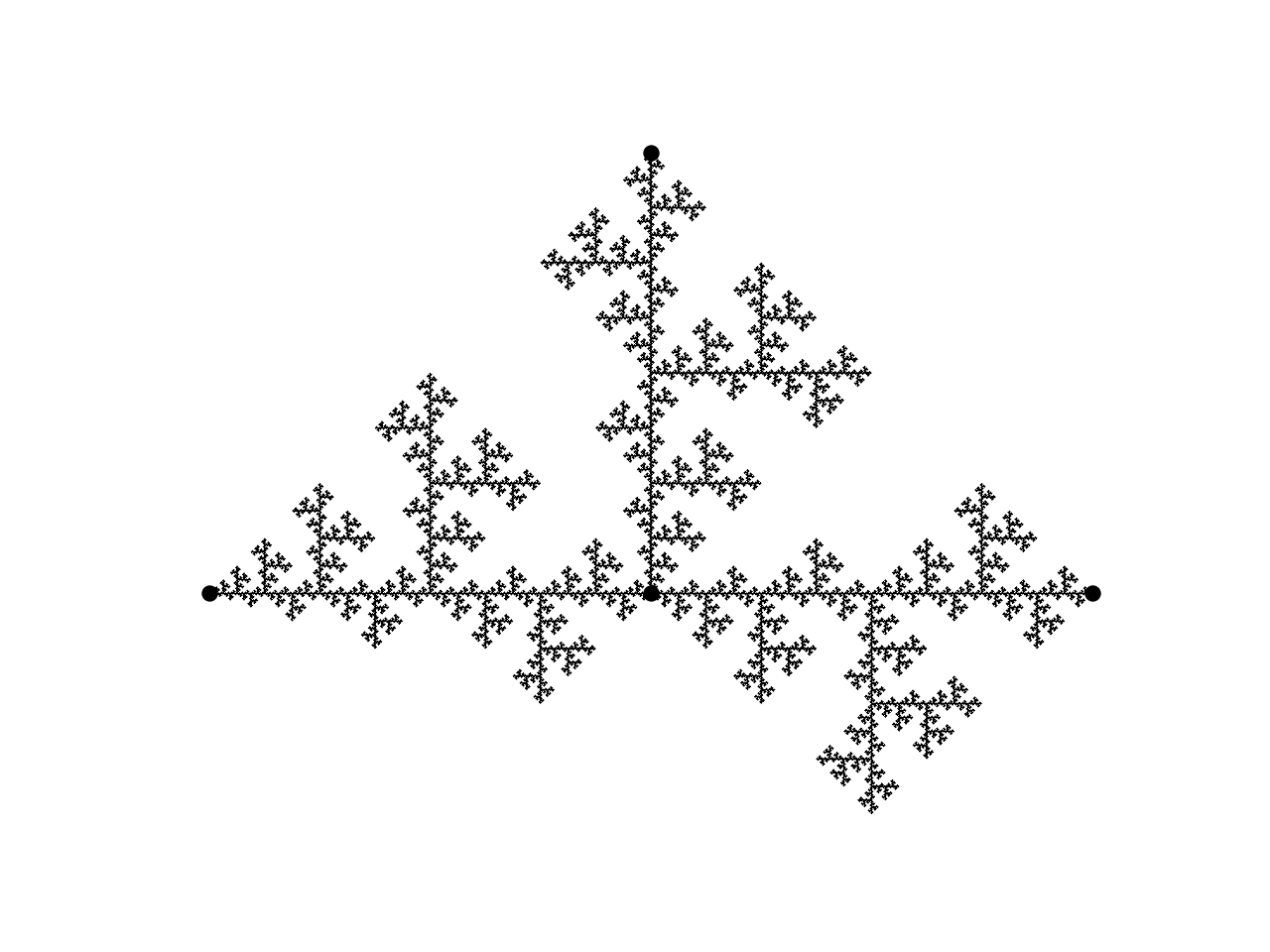}
    \put(50,56){ $i$}
    \put(12,15){ $-1$}
     \put(84,15){ $1$}
       \put(49,15){ $0$}
           \end{overpic}
\caption{The continuum self-similar tree $\cT$.}
\label{fig:CSST}
\end{figure}
In order to formulate our main result, we will  now discuss the relevant concepts
in more detail.  We start with the CSST $\cT$. 

Up to bi-Lipschitz equivalence, one can define an abstract 
 version of $\cT$ as follows. We start with a  line segment $J_0$ of length $2$. Its  midpoint $c$ subdivides  $J_0$ into two line segments of length $1$. We glue 
to $c$ one of the endpoints of another line segment $s$ of the same length. Then we obtain a tripod-like set  $J_1$ consisting of three line segments of length $1$. The set $J_1$ carries the natural  path metric. We now repeat this procedure inductively. At the $n$-th step we obtain a simplicial tree $J_n$ consisting of 
$3^n$ line segments of length $2^{1-n}$. To pass to $J_{n+1}$, each of these line segments $s$ is subdivided by its midpoint $c_s$ into two line segment of length $2^{-n}$ and we glue to $c_s$ one endpoint of another line segment of length
 $2^{-n}$.  One can actually realize $J_n$ as a subset of the complex plane (equipped with the induced path metric).
 See \cite{BT} for more discussion and Figure~\ref{fig:J5} for an illustration of $J_5$.

In this way, we obtain  an ascending sequence $J_0\sub J_1\sub \ldots $  of (simplicial)  trees equipped with  a geodesic metric
(i.e., a metric satisfying \eqref{eq:geodmetr} below). 
The union $J=\bigcup_{n\in \N_0}J_n $ carries a natural path metric $\varrho$ that agrees with the metric on $J_n$ for each $n\in \N_0$. As an abstract space  one can now define $\cT$  as the completion of the metric 
space $(J,\varrho)$. For the equivalent definition of $\cT$ as the attractor of an iterated function system see Section~\ref{sec:cont-self-simil}.

\begin{figure}
 \begin{overpic}[ scale=0.7
    ]{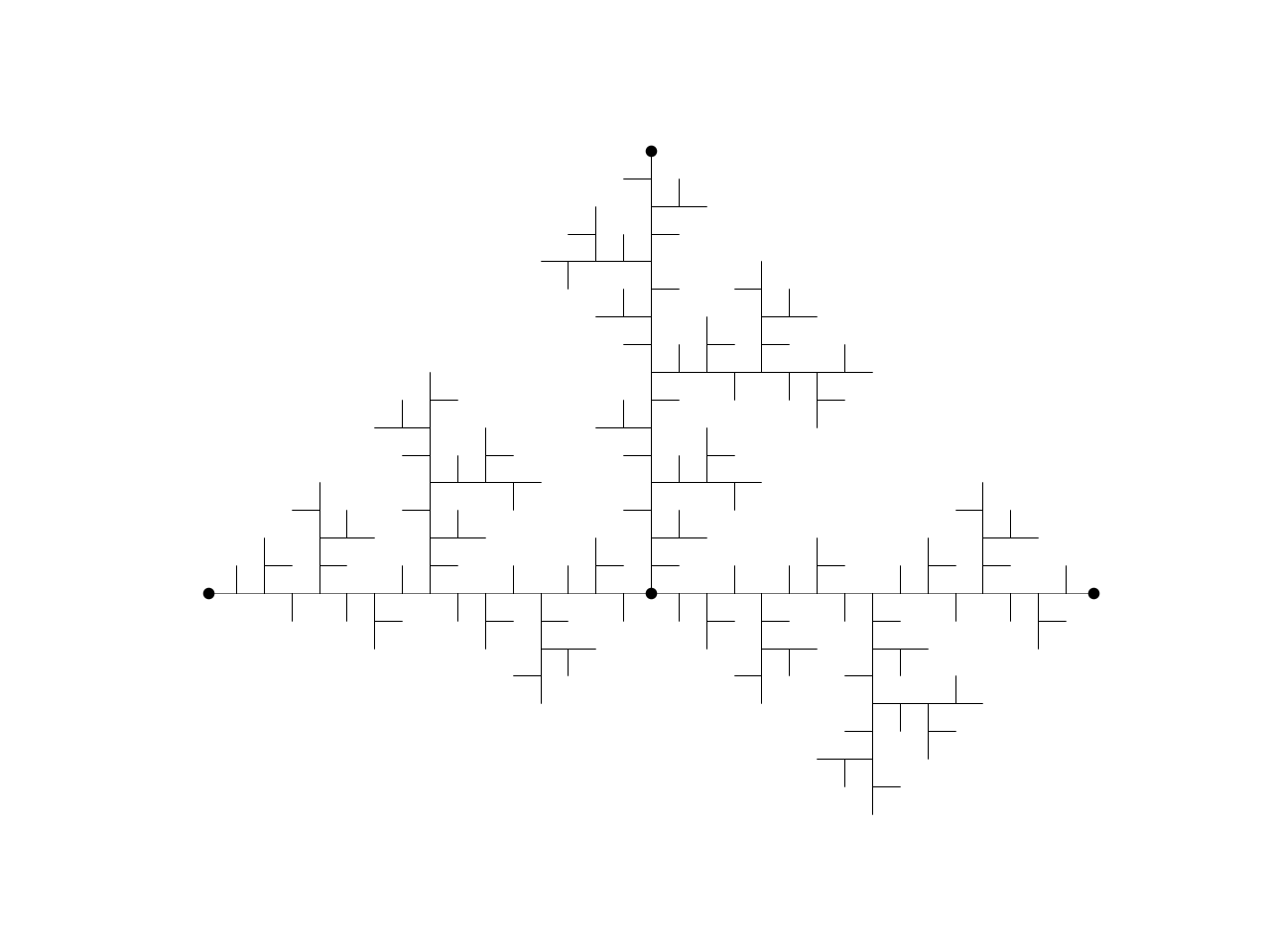}
    \put(52,63){ $i$}
    \put(14,25){ $-1$}
     \put(85,25){ $1$}
       \put(50,25){ $0$}
           \end{overpic}
  \vspace{-1cm}
\caption{The set $J_5$.}
\label{fig:J5}
\end{figure}

In order to describe the topological properties of $\cT$, we have to recall some terminology.
 By definition a {\em metric tree} is a compact, connected, and locally
connected metric  space $T$ that contains  at least two
distinct points and has  the following property: if $x,y\in T$, then
there exists a unique (possibly degenerate) arc in $T$ with endpoints $x$ and
$y$. We denote this unique arc in $T$ by $[x,y]$.

 Let $T$ be a metric tree,  and  $p\in T$. The closure $B$ of a connected component 
 $U$ of $T\setminus\{p\}$ has the form $B=\overline U=U\cup \{p\}$ and is called a {\em branch} of $p$ in $T$. A point $p\in T$ can have at most countably many branches. If there are at least three branches of $p$ in $T$, then $p$ is called a 
 {\em branch point} and a {\em triple point} if there exactly three branches. 
 We say that the metric tree $T$ is {\em trivalent} if every branch point of $T$ is a triple point. 
 
 In \cite{BT} it was shown that the CSST is a trivalent metric tree with a dense set of triple points.  This seems obvious from the abstract definition of $\cT$ outlined above, but is  harder to show if one introduces $\cT$ as the attractor of an iterated function system. The CSST $\cT$ is characterized  by these properties up to homeomorphism \cite[Theorem~1.7]{BT}: {\em a metric space $T$ is homeomorphic to $\cT$ if and only if $T$ is a trivalent metric tree with a dense set of triple points}. 
 
 In a sense, this gives an essentially complete understanding of the {\em topological}
 properties of the CSST. In contrast, in the present paper we are interested in the  {\em geometric} properties of $\cT$, in particular its {\em quasiconformal geometry}. More specifically, 
 the goal of the present paper is to give a characterization of the CSST up to quasisymmetric equivalence. Here it does not matter whether we use the abstract 
version of the CSST (whose construction was outlined above) or its representation 
$\cT\sub \C$ as
an  attractor of an  iterated function system, because both versions of the CSST are bi-Lipschitz and hence quasisymmetrically equivalent.  For concreteness we will actually use the latter representation of the  CSST (see Section~\ref{sec:cont-self-simil}). 

It is clear that the requirements  relevant for this characterization of the CSST have to go beyond mere topology. One would expect scale-invariant conditions involving relative distances. Some of the  necessary conditions will not come as a surprise to the informed reader. We recall some relevant terminology. 
    
We say that a metric tree $T$ is of {\em bounded turning} if
there exists a constant $K\ge 1$ such that
$$ \diam  [x,y]\le K |x-y|$$ 
for all $x,y\in T$. Here and elsewhere in this paper we use  Polish notation $|x-y|$ to denote the distance of two points $x$ and $y$ in  a space with a given underlying metric. 

A metric space $T$ is {\em doubling} if there
exists a constant $N\in \N$ such that every ball in $T$ of radius
$r>0$ can be covered by $N$ or fewer balls of radius $r/2$.

The CSST has these properties (see Section~\ref{sec:cont-self-simil}), and they are invariant under quasisymmetries.
Accordingly, they are necessary conditions for  a tree $T$ to be  quasisymmetrically equivalent to the CSST. It is useful to  introduce a name for such trees. 
We call a metric tree $T$ a {\em  quasiconformal tree} if
it is doubling and  of bounded turning. 

Quasiconformal trees were considered before in our paper \cite{BM19}. There we showed
that every quasiconformal tree is quasisymmetrically equivalent to a tree with a geodesic metric. 


We have seen above that a  necessary condition  for a metric space $T$ to be quasisymmetrically equivalent to the CSST is that $T$ is a trivalent quasiconformal tree  with a dense set of branch points. It is easy to see that  
this is only necessary, but in general not sufficient.  For a sufficient condition one would expect a more quantitative  version of the density of branch points.  As we will see momentarily,  one also has to stipulate that branch points are quantitatively separated in a suitable sense. 
 
To formulate this precisely, we need a quantitative measure of
the size of a branch point $p$ in a (not
necessarily trivalent) metric tree $T$.  Only finitely many of the
branches of $p$ can have a diameter exceeding a given positive
number (see \cite[Section~3]{BT} for more details). This implies
that we can label the branches $B_n$ of $p$ by numbers
$n=1,2,3,\dots$ so that
\begin{equation*}
  \diam(B_1)
  \ge
  \diam(B_2)\ge \diam(B_3)
  \ge \dots\, . 
\end{equation*}
Then we  define the \emph{height} $H_T(p)$ of $p$ in $T$ as
\begin{equation}\label{eq:HT}
  H_{T}(p)=\diam(B_3). 
\end{equation} 
So $H_T(p)$ is the diameter of the third largest branch of $p$.

Note that when $T$ is trivalent, we have
\begin{equation}
  \label{eq:def_weight}
  H_T(p)
  =
  \min\{\diam(B) : B \text{ is a branch of $p$ in $T$}\}.  
\end{equation}



 We can now define the relevant concepts. 
 \begin{definition}[Uniform relative  separation of branch points]
  \label{def:unif_sepa}
  A 
  tree $T$ is said to have \emph{uniformly relatively separated}
  branch points if
  \begin{equation*}
    \label{eq:unif_sepa}
    \abs{p-q}\gtrsim \min\{H_T(p), H_T(q)\}
  \end{equation*}
  for all distinct branch points $p,q\in T$ with $C(\gtrsim)$
  independent of $p$ and $q$.    
\end{definition}
The reference  $C(\gtrsim)$ to the implicit multiplicative constant here is explained in the subsection on  notation on page \pageref{sec:notation}. 

\begin{definition}[Uniform relative density of branch points]
  \label{def:unif_dense}
  A
  tree $T$ is said to have \emph{uniformly relatively 
    dense} branch points if  for all $x,y\in T$, $x\ne y$,  there exists  a branch point $p\in
  [x,y]$ with 
  \begin{equation*}
    \label{eq:unif_dense}
    H_T(p) \gtrsim \abs{x-y},
  \end{equation*}
  where  $C(\gtrsim)$ independent of $x$ and $y$. 
\end{definition}

These definitions lead to the most  important concept of this paper. 
\begin{definition}[Uniformly branching trees]
  \label{def:unif_branching}
A 
tree  is called \emph{uniformly branching} if its
branch points are uniformly relatively separated and uniformly relatively dense.
\end{definition}  
Now our main result can be formulated as follows. 

\begin{theorem}
  \label{thm:CSST_qs}
  A metric space $T$ is quasisymmetrically equivalent 
  to the continuum  self-similar tree $\cT$ if and only if
  $T$ is a trivalent quasiconformal tree that is uniformly branching. 
\end{theorem}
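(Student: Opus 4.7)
The plan is to prove the two implications separately. For the necessity direction, I would first verify that the CSST itself is a trivalent quasiconformal tree that is uniformly branching. The trivalence and the doubling/bounded turning properties are known from the iterated function system representation and from \cite{BT}. For the uniform branching property, I would exploit the explicit construction: each branch point $p$ of $\cT$ first appears as a triple point of some $J_n$ at which a new edge of length $2^{-n}$ is attached, so $H_\cT(p)\asymp 2^{-n}$ while its three branches have diameters $\asymp 2^{-n}$. Two distinct branch points at scale $2^{-n}$ are separated by at least a definite multiple of $2^{-n}$, which yields uniform relative separation, and any two points at distance $\asymp 2^{-n}$ have on the arc between them a triple point of scale $\asymp 2^{-n}$, which yields uniform relative density. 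One then checks that being a trivalent quasiconformal tree and being uniformly branching are quasisymmetric invariants (with quantitative control); the key point is that heights $H_T(p)$ transform under quasisymmetries in the same controlled way as diameters, so the inequalities in Definitions~\ref{def:unif_sepa} and~\ref{def:unif_dense} are preserved up to a change of constants.

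For sufficiency, let $T$ be a trivalent quasiconformal tree that is uniformly branching. After replacing the metric on $T$ by a quasisymmetrically equivalent geodesic one using \cite{BM19}, I would construct a nested sequence of finite simplicial subtrees $T_0\subset T_1\subset T_2\subset\dots$ mimicking the $J_n$ construction. Here $T_0$ is a geodesic arc between two endpoints realizing roughly the diameter of $T$, and $T_{n+1}$ is obtained from $T_n$ by selecting, inside each edge $e$ of $T_n$, a triple point $p_e\in T$ with $H_T(p_e)$ comparable to $\diam(e)$ (available by uniform relative density), and attaching to $T_n$ the new branch of $p_e$ not yet represented. The uniform relative separation ensures that the $p_e$ are quantitatively in the middle of the edges, the bounded turning makes the combinatorial edges close to geodesic arcs, and doubling bounds the number of pieces at each scale. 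The result is a sequence of triangulation-like decompositions of $T$ into $3^n$ subtrees of diameter $\asymp 2^{-n} \diam(T)$, each intersecting its neighbors in single triple points, with combinatorics canonically isomorphic to the analogous decomposition of $\cT$ by the $J_n$.

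With these matched combinatorial models I would define $\phi\colon \bigcup_n T_n \to \bigcup_n J_n$ as the bijection of vertices inherited from the combinatorial isomorphism, extend it by continuity to the completions, and check quasisymmetry. The main obstacle is precisely this quantitative quasisymmetry verification: one must show that for any pair $x,y\in T$ the ratio $|x-y|/|z-w|$ is controlled in terms of $|\phi(x)-\phi(y)|/|\phi(z)-\phi(w)|$. The strategy is to locate $x,y$ in pieces of the decomposition at the appropriate scale $n$ with $2^{-n}\asymp |x-y|$; the uniform branching, doubling, and bounded turning hypotheses together imply that the diameters of these pieces in $T$ and in $\cT$ are all comparable, and that nested pieces have diameters that shrink by a definite factor at each level. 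This two-sided control on scales at every level gives the quasisymmetry modulus in a standard way. A clean packaging is to establish an abstract criterion: a homeomorphism between two spaces admitting compatible multi-resolution decompositions with uniform scale control is quasisymmetric. The hardest step in executing this plan is ensuring that the choices $p_e$ can be made coherently enough that the resulting decompositions behave like a dyadic subdivision system, which is where the full strength of both uniform relative separation and uniform relative density is used simultaneously.
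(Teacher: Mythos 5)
Your necessity argument is essentially the paper's: verify that $\cT$ is a uniformly branching trivalent quasiconformal tree and that all the defining properties are quasisymmetric invariants (the key point being that heights transform like diameters, which is exactly Corollary~\ref{cor:weight_qs} and Lemmas~\ref{lem:unif_sepa_qs} and~\ref{lem:unif_dense_qs}). Your "abstract criterion" for upgrading matched multi-resolution decompositions to a quasisymmetry is also precisely the paper's notion of isomorphic quasi-visual subdivisions (Proposition~\ref{prop:qv_f_qs}), so that packaging is sound.

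The gap is in the sufficiency direction, in your claim that the construction yields "$3^n$ subtrees of diameter $\asymp 2^{-n}\diam(T)$". This is false in general. When you select a triple point $p_e$ in an edge $e$ of $T_n$ with $H_T(p_e)\asymp\diam(e)$, the height only controls the diameter of the \emph{smallest} branch of $p_e$; the "new branch not yet represented" can have diameter vastly larger than $\diam(e)$. For instance, a uniformly branching tree may contain a branch of diameter comparable to $\diam(T)$ attached at a point lying at distance $2^{-10}\diam(T)$ from a leaf of your initial diameter arc $T_0$: neither uniform relative separation nor uniform relative density forbids this, since the height of that attachment point is still only $\asymp 2^{-10}\diam(T)$. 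The level-$n$ piece of $T$ hanging off such a point then has diameter incomparable to $2^{-n}\diam(T)$, the decomposition is not quasi-visual, and the map to the corresponding tile $\cT_w$ (of diameter exactly $2^{-|w|+1}$) cannot be quasisymmetric. This is exactly why the paper does not force the combinatorics of $T$ to match the dyadic subdivision of $\cT$: instead it cuts $T$ at \emph{all} branch points with $H_T\ge\delta^n$ (Proposition~\ref{prop:decomp}), which guarantees every tile has diameter $\asymp\delta^n$ (uniform density prevents any tile from being too large, since a large tile would contain an uncut high branch point), and then accepts that the resulting combinatorics are non-canonical. The price is that one must \emph{build} an isomorphic quasi-visual subdivision of $\cT$ by the inductive tile-homeomorphism construction of Section~\ref{sec:decomp}, with the delicate normalization $\ell(F(P))=2$ for tiles meeting the boundary to keep neighboring image tiles of comparable size. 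That inductive replication on the $\cT$ side is the genuinely hard step, and it is absent from your plan.
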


An immediate consequence is the following fact. 

\begin{cor}
  \label{cor:unif_qs}
  Let $S$ and $T$ be two trivalent quasiconformal trees that are
  uniformly branching. Then $S$ and $T$  are quasisymmetrically equivalent. 
\end{cor}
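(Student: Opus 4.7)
The plan is to deduce this corollary immediately from Theorem~\ref{thm:CSST_qs} by using the continuum self-similar tree $\cT$ as a common model for both $S$ and $T$. Since $S$ is a trivalent quasiconformal tree that is uniformly branching, Theorem~\ref{thm:CSST_qs} furnishes a quasisymmetric homeomorphism $f\colon S\to \cT$. Applying the theorem a second time to $T$, one obtains a quasisymmetric homeomorphism $g\colon T\to \cT$.

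The candidate map will then be $h = g^{-1}\circ f \colon S \to T$. To conclude, I would invoke two standard facts about quasisymmetric homeomorphisms between arbitrary metric spaces: the inverse of a quasisymmetric homeomorphism is again quasisymmetric, and the composition of two quasisymmetric homeomorphisms is quasisymmetric (both with distortion functions controlled in a completely explicit way by the originals). Both facts are recorded, for instance, in \cite{He}. Together they imply that $h$ is a quasisymmetric homeomorphism from $S$ onto $T$, so that $S$ and $T$ are quasisymmetrically equivalent as claimed.

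There is essentially no obstacle in this argument: the entire content of the statement is absorbed into Theorem~\ref{thm:CSST_qs}, and the corollary reduces to a purely formal transitivity-of-equivalence argument for the relation of quasisymmetric equivalence. The substantive work lies in the proof of the main theorem itself, namely the construction of a quasisymmetric homeomorphism from an arbitrary trivalent uniformly branching quasiconformal tree onto the model space~$\cT$; once that is done, the corollary follows with no further effort.
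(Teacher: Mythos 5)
Your proposal is correct and coincides with the paper's own proof: both apply Theorem~\ref{thm:CSST_qs} twice to obtain quasisymmetries $\varphi\colon S\to \cT$ and $\psi\colon T\to \cT$ and then take $\psi^{-1}\circ\varphi$, using the standard closure of quasisymmetries under inversion and composition. Nothing is missing.
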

\begin{proof}
  Indeed, by  Theorem~\ref{thm:CSST_qs} there exist quasisymmetries 
  $\varphi\: S\ra \cT$ and $\psi\: T\ra \cT$. Then $\psi^{-1}\circ \varphi$ is a quasisymmetry of $S$ onto $T$, and so $S$ and $T$ are quasisymmetrically equivalent.   \end{proof} 
  
  One can outline the proof of Theorem~\ref{thm:CSST_qs} as follows. The ``only if'' part  of the statement is not hard to show, because the CSST is 
  a trivalent  quasiconformal tree that is uniformly branching (see Proposition~\ref{prop:CSST_unif}).  Moreover, the relevant  conditions are  invariant under quasisymmetries.
  This is well-known for the doubling and the bounded turning conditions which are the basis for the definition of a quasiconformal tree. It is also true 
   that uniform relative separation and uniform relative density
   of branch points are invariant  under quasisymmetries (see
   Lemmas~\ref{lem:unif_sepa_qs}
   and~\ref{lem:unif_dense_qs}). This is not  surprising, because 
   these are conditions in terms of relative distances: 
  heights of branch points versus distances of points, both of which are measured in  ``units''  of length. A quasisymmetry gives good control
  for the distortion of such relative distances. As we will see in 
  Section~\ref{sec:unif-separ-dens}, it is completely straightforward to implement these ideas rigorously.   
  
 It is much more difficult to prove the ``if'' part of  Theorem~\ref{thm:CSST_qs}.
 The basic strategy is very similar to ideas in the recent papers
 \cite{BT} and \cite{BM19}. For the given tree $T$ one wants to create a ``subdivision'' by finer and finer decompositions of $T$. This subdivision is represented  by a sequence 
  $\{\X^n\}_{n\in \N_0}$ of decompositions 
  of $T$ given  by finite covers    $\X^n$, $n\in \N_0$,  of $T$   by compact subsets. We call the sets $X\in \X^n$  the {\em tiles} of level $n$ of the decomposition. We will obtain these tiles by cutting $T$ 
  at the set $\V^n$ of all branch points  with heights $\ge \delta^n$, where $\delta\in (0,1)$ is a small parameter. More precisely, the tiles  in  $ \X^n$ are the closures of the components of $T\setminus \V^n$.  Since $\V^n\sub \V^{n+1}$, the tiles of level $n$ are subdivided into smaller  tiles of level  $n+1$. 
  
  If we assume that $T$ is a uniformly branching trivalent tree,
  then the tiles in our subdivision have very good geometric 
  properties. For example, if $X$ is a tile of level $n$, then $\diam(X)\asymp \delta^n$.  Moreover, $\X^n$ for $n\in \N_0$ is an {\em edge-like} decomposition of $T$ in the sense that each tile $X\in \X^n$ has at most  two boundary points (see Proposition~\ref{prop:decomp}). This implies 
  that the incidence relations  of the tiles $X\in \X^n$ are  the same as   the incidence relations  of the edges in a suitable finite trivalent simplicial tree.

One wants to realize a similar subdivision with the same combinatorics for the CSST 
$\cT$. Once this is achieved, then under some mild extra
conditions there exists a homeomorphism $F\: T\ra \cT$ that maps
the tiles   in the subdivision of $T$ to corresponding tiles in
the subdivision  of $\cT$. One can show that this homeomorphism
is a quasisymmetry if the tiles in the corresponding subdivisions
satisfy suitable geometric conditions. We condense the relevant conditions into the concept of a {\em quasi-visual approximation} and {\em quasi-visual subdivision} of a space (see Section~\ref{sec:appr-subd} for precise definitions). 

Roughly speaking, if we have quasi-visual approximations for two spaces that correspond under a homeomorphism, then this homeomorphism is a quasisymmetry. 
Related ideas have been used by us and other authors  before to prove that a homeomorphism
is a quasisymmetry. We formulate this in a general setting in 
 Section~\ref{sec:appr-subd}. We hope that it clarifies  some of the earlier approaches and may be of independent interest. 

 In view of this, one would like our subdivision $\{\X^n\}$ of
 $T$ to be quasi-visual. This is always the case under the given
 hypotheses (see
 Proposition~\ref{prop:decomp}~\ref{item:decomp5}).  In order to
 find a quasi-visual subdivision of $\cT$ corresponding to  the subdivision $\{\X^n\}$ of $T$, one uses an
 inductive process.  The decomposition of $\cT$ on level $n$ is
 given by the image sets $F^n(X)$, $X\in \X^n$, of an auxiliary
 homeomorphism $F^n\: T\ra \cT$. Note that we use the  superscript
 $n$ here to indicate  the level (and not the $n$-th iterate of some map $F$). For the inductive step, we
 subdivide the tile $F^n(X)$ of $\cT$ for each $X\in \X^n$  in the same way as $X$ is
 subdivided by tiles in $\X^{n+1}$. 

 In this way, one obtains a   subdivision  $\{F^n(\X^n)\}$ of
 $\cT$ with the same combinatorics as the subdivision $\{\X^n\}$
 of $T$. As we will see, one can realize the isomorphism between
 $\{\X^n\}$ and $\{F^n(\X^n)\}$ by a single homeomorphism
 $F\: T \ra\cT$ such that $F(X)=F^n(X)$ for all $X\in \X^n$.
 Then $\{\X^n\}$ and $\{F^n(\X^n)\}=\{F(\X^n)\}$ are in a suitable sense isomorphic subdivisions
 of $T$ and $\cT$, respectively. The
 subdivision $\{\X^n\}$ is quasi-visual, and one can show that
 with careful choices in the inductive process $\{F(\X^n)\}$ is a
 quasi-visual subdivision as well (for more details see
 Proposition~\ref{prop:exqs} and its proof). Hence the
 homeomorphism $F$ is a quasisymmetry, and $T$ and $\cT$ are
 quasisymmetrically equivalent.


\subsection*{Organization of this paper}
\label{sec:organ-this-paper}

We first  introduce
\emph{quasi-visual approximations} and \emph{quasi-visual subdivisions} in Section~\ref{sec:appr-subd}. These are
sequences of decompositions of a space that approximate it in a
geometrically controlled way. They are closely connected to
quasisymmetries (see Proposition~\ref{prop:qv-qs} and  Proposition~\ref{prop:qv_f_qs}). 
In Section~\ref{sec:topology-trees} we collect some facts about the  topology of
trees and their decompositions. 
 In Section~\ref{sec:unif-separ-dens} we show that quasisymmetries
preserve the property of a trivalent tree to be uniformly branching.

The CSST and some of its properties are reviewed in Section~\ref{sec:cont-self-simil}.
There we show that the CSST is uniformly branching (Proposition~\ref{prop:CSST_unif}). 
In Section~\ref{sec:subdivisions-trees} we consider specific
subdivisions of trees and introduce
\emph{edge-like subdivisions}. We will show that 
each uniformly branching quasiconformal tree admits edge-like subdivisions with good geometric control (see Proposition~\ref{prop:decomp}).  

 Section~\ref{sec:decomp} is the technical core of the paper. There we show that if a uniformly branching quasiconformal tree $T$ has  
 an edge-like subdivision as in Proposition~\ref{prop:decomp},
 then   there exists a subdivision of the CSST with the same combinatorics (see Proposition~\ref{prop:exqs}).
 For the proof we use an inductive process based on an existence result for homeomorphisms $F\: T\ra \cT$ with good properties (see 
 Lemma~\ref{lem:decomp}). 
 
 Our argument is  wrapped up in Section~\ref{sec:proof}, where we assemble the considerations of the earlier sections 
for the proof of   Theorem~\ref{thm:CSST_qs}.

\subsection*{Notation}
\label{sec:notation}

We denote by $\N=\{1,2,\dots\}$ the set of natural numbers and set
$\N_0=\N\cup \{0\}$. We write $\#X\in \N_0$ for the cardinality of a finite set $X$.


Given two non-negative quantities $a$ and $b$, we write $a\asymp
b$ if there is a constant $C>0$ depending on some ambient parameters such that $a/C \leq b \leq C
a$. In this case, we refer to the constant as
$C=C(\asymp)$. Similarly, we write $a\lesssim b$ or $b\gtrsim a$
if there is a constant $C>0$ such that $a\leq Cb$. We refer to the
constant as $C=C(\lesssim)=C(\gtrsim)$.

Let  $(X,d)$ be a metric space.    If $A\subset X$, we denote by $\overline{A}$ the
closure, by $\inte(A)$ the interior, and by $\partial A$ the boundary
of $A$. Moreover, 
$$\diam(A)=\sup\{d(x,y): x,y\in X\}$$
is the diameter of $A$. If $B\sub X$ is another set, then we use the notation 
$$ \dist(A,B)=\inf\{d(x,y): x\in A,\, y\in B\}$$ 
for the distance of $A$ and $B$. 

Let $T$ be a metric tree, $x,y\in T$, and $[x,y]$ be the unique arc in $T$ joining $x$ and $y$.  Then we write $\diam [x,y]$ instead of $\diam([x,y])$
for the diameter of the arc $[x,y]$; so we omit the 
parentheses in our notation  for better readability.  Similarly, we denote by $\length  [x,y]$ the length of $[x,y]$ with respect to the given metric on $T$.


If $F\colon X\to Y$ is a map between sets $X$, $Y$, and $A\subset
X$ is a subset of $X$, then we write $F|A$ for the restriction of $F$ to $A$. The identity map on $X$ is denoted by $\id_X$, or simply by $\id$ if $X$ is understood.

\subsection*{Acknowledgment} We thank Angela Wu for a remark that led to 
 Lemma~\ref{lem:qv_qs} in its present form.

\section{Quasi-visual approximations and subdivisions}
\label{sec:appr-subd}

In this section, we provide a general framework to describe a
space by {\em approximations} and \emph{subdivisions}. Our main  result here gives 
 a method of how to prove quasisymmetric equivalence  of two metric spaces based on these concepts (see
Proposition~\ref{prop:qv_f_qs}). Very similar ideas have recently been formulated by Kigami \cite{Ki}. 
We will first recall the definition 
of a  quasisymmetric homeomorphism and then consider  \emph{quasi-visual approximations} of metric spaces.

Let $(S,d)$ and $(T,\rho)$ be metric spaces. A homeomorphism
$F\colon S\to T$ is called a {\em quasisymmetric homeomorphism}
or a {\em quasisymmetry}  if
there  exists a homeomorphism  $\eta\: [0,\infty)\ra [0,\infty)$ such that for all $t>0$ and all 
$x,y,z\in S$ the following implication holds:
\begin{equation}\label{eq:defqs}
  d(x,y)\leq td(x,z) 
  \Rightarrow
  \rho(F(x),F(y))\leq \eta(t) \rho(F(x),F(z)).  
\end{equation}
If we want to emphasize the distortion function $\eta$ here, then we call $F$ an $\eta$-{\em quasisymmetry}. 


If a bijection $F\colon S\to T$ satisfies the implication
\eqref{eq:defqs}, then $F$ is continuous, and $F^{-1}$ satisfies a similar implication. Hence $F^{-1}$ is also continuous, and $F$ is a homeomorphism. So once we have \eqref{eq:defqs} for a bijection
$F\:S\ra T$,  it is a quasisymmetric homeomorphism.

As we will see, the following concept is relevant in connection with quasisymmetries. 

\subsection*{Quasi-visual approximations}
\label{sec:quasi-visu-appr}

\begin{definition}[Quasi-visual approximations]
  \label{def:qv_approx}
  Let $S$ be a bounded metric space.  A \emph{quasi-visual
    approximation} of $S$ is a sequence
  $\{\X^n\}_{n\in\N_0}$ of finite covers $\X^n$, $n\in \N_0$,  of $S$ by some of its subsets. Here we assume $\X^0=\{S\}$ and make the following requirements 
  for all $n\in \N_0$ with implicit constants independent of $n$, $X$, $Y$: 
    \begin{enumerate}
  \item 
    \label{item:qv_approx1}
       $\diam(X) \asymp \diam(Y)$
  for all  $X,Y\in \X^n$ with $X\cap Y\ne \emptyset$.
  \item 
    \label{item:qv_approx2}
      $\dist(X,Y) \gtrsim \diam(X)$
      for all  $X,Y\in \X^n$ with 
      $X\cap Y=\emptyset$.
     \item 
    \label{item:qv_approx3}
     $\diam(X)\asymp \diam(Y)$
for all $X\in \X^n$, $Y\in \X^{n+1}$ with 
      $X\cap Y\ne \emptyset$.
  \item 
    \label{item:qv_approx4}
    For some constants $k_0\in \N_0$ and $\lambda \in (0,1)$ independent of $n$ we have $ \diam(Y) \leq \lambda \diam(X)$
    for all $X\in \X^n$ and $Y\in \X^{n+k_0}$ 
     with $X\cap Y\neq \emptyset$.   \end{enumerate}
\end{definition}


For simplicity, we will usually just write $\{\X^n\}$ instead of 
$\{\X^n\}_{n\in\N_0}$  with the index set $\N_0$ for $n$ understood.
We call the elements of $\X^n$ the {\em tiles of level $n$}, or simply the {\em $n$-tiles} of  the sequence  $\{\X^n\}$.

Condition~\ref{item:qv_approx4} in the previous definition can be expressed in a slightly
different form.

\begin{lemma}
  \label{lem:sub_shrink}
  Let $\{\X^n\}$ be a sequence of finite covers  of a bounded metric space  $S$ satisfying condition~\ref{item:qv_approx3} in Definition~\ref{def:qv_approx}. Then
  $\{\X^n\}$ satisfies \ref{item:qv_approx4} if and only 
  if there
  exist  constants $C>0$ and $\rho \in (0,1)$ such that
  \begin{gather}
    \label{eq:qv_approx4p}
    \tag{iv'} 
  \diam(X^{n+k}) \leq C \rho^k \diam(X^n).
  \end{gather}
  for all $k,n\in \N_0$,  $X^n\in \X^n$, $X^{n+k}\in\X^{n+k}$ with
  $X^n\cap X^{n+k}\neq \emptyset$. 
  \end{lemma}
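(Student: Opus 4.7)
The plan is to prove both directions, noting that (iv') $\Rightarrow$ (iv) is essentially a choice of constants, while (iv) $\Rightarrow$ (iv') is the substantive direction that requires iterating (iv) $k_0$ levels at a time and filling in the remainder using (iii).

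First, for the easy direction (iv') $\Rightarrow$ (iv): given constants $C$ and $\rho \in (0,1)$ from (iv'), I would choose $k_0\in\N$ large enough that $C\rho^{k_0} < 1$ and simply set $\lambda := C\rho^{k_0}$. Then (iv') applied with $k=k_0$ immediately yields (iv).

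For the main direction (iv) $\Rightarrow$ (iv'), fix $n,k\in\N_0$ and tiles $X^n \in \X^n$ and $X^{n+k}\in\X^{n+k}$ with $X^n\cap X^{n+k}\neq\emptyset$. Pick a point $p\in X^n\cap X^{n+k}$, write $k = m k_0 + r$ with $0\le r< k_0$, and for each $j=1,\dots,m$ choose a tile $X^{n+jk_0}\in \X^{n+jk_0}$ with $p\in X^{n+jk_0}$ (such a tile exists since $\X^{n+jk_0}$ covers $S$). Then consecutive tiles in this chain share the point $p$, so repeated application of (iv) gives
\begin{equation*}
  \diam(X^{n+mk_0}) \le \lambda^m \diam(X^n).
\end{equation*}
To close the remaining gap of length $r<k_0$, I would select intermediate tiles $X^{n+mk_0+j}\in \X^{n+mk_0+j}$ containing $p$ for $j=1,\dots,r-1$, with the last link of the chain being our given tile $X^{n+k} = X^{n+mk_0+r}$. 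Each consecutive pair in this short chain meets at $p$, so condition (iii) applied $r$ times gives $\diam(X^{n+k}) \le C_3^r \diam(X^{n+mk_0})$, where $C_3 = C(\asymp)$ is the multiplicative constant from (iii). Since $r<k_0$, this bound is absorbed into $C_3^{k_0}$. Combining,
\begin{equation*}
  \diam(X^{n+k}) \le C_3^{k_0} \lambda^m \diam(X^n).
\end{equation*}
Setting $\rho := \lambda^{1/k_0}\in(0,1)$, the exponent satisfies $\lambda^m = \rho^{mk_0} = \rho^{k-r} \le \rho^{-k_0}\rho^k$, so (iv') holds with $C := C_3^{k_0}\rho^{-k_0}$ and $\rho$ as above.

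I expect no serious obstacle: the one point to be careful about is that we must use the given $X^n$ and $X^{n+k}$ as the endpoints of our chain, rather than arbitrary tiles containing $p$. This is handled automatically at the top (since (iv) is stated for all pairs, we are free to start the chain at our specific $X^n$) and at the bottom (we declare $X^{n+mk_0+r}$ to be our specific $X^{n+k}$ and apply (iii) rather than (iv) for the final $r$ steps, which is permissible as (iii) holds for any intersecting pair). The only other mild care needed is that $\X^{n+j}$ covers $S$ so that intermediate tiles through $p$ exist.
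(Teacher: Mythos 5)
Your proof is correct and follows essentially the same route as the paper: write $k=mk_0+r$, chain intermediate tiles through a common point of $X^n\cap X^{n+k}$, apply (iv) across the $m$ blocks of length $k_0$ and (iii) for the remaining $r<k_0$ steps, then absorb the remainder into the constant via $\rho=\lambda^{1/k_0}$; the converse is the same choice of $k_0$ with $C\rho^{k_0}<1$. No gaps.
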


\begin{proof}
  Assume first that condition \ref{item:qv_approx4} in Definition~\ref{def:qv_approx} holds with  
   $k_0\in \N$ and $\lambda\in (0,1)$. Let  $k,n\in \N_0$,  
    $X^n\in \X^n$, and  $X^{n+k}\in\X^{n+k}$ with
  $X^n\cap X^{n+k}\neq \emptyset$ be arbitrary. 
  We pick a point $x\in X^n\cap X^{n+k}$. For  $i=n+1, \dots, n+k-1$, we choose 
  $X^i\in \X^i$  with $x\in X^i$. 
  We write $k$  in the form $k=jk_0 + \ell$ with  $ j\in \N_0$ and
  $\ell\in \{0,\dots,k_0-1\}$. Finally,  let $C_1=C(\asymp)\ge 1$ be  the constant in condition \ref{item:qv_approx3} of
  Definition~\ref{def:qv_approx}. Then by 
 repeated applications of  \ref{item:qv_approx3} and  \ref{item:qv_approx4} 
   we  see 
  that
  \begin{align*}
    \diam(X^{n+k})
    &=
    \diam(X^{n+jk_0 +\ell}) 
    \leq 
    C_1^{\ell}\diam(X^{n+jk_0})
    \\
    &\leq
    C_1^{\ell}\lambda^j\diam(X^n)     
    \le  
    C_1^{k_0-1}(\lambda^{1/k_0})^{jk_0}\diam(X^n). 
   \end{align*}  
If we define $\rho \coloneqq\lambda^{1/k_0}\in (0,1)$ 
      and $C\coloneqq C_1^{k_0-1}{\lambda}^{-(k_0-1)/k_0}\ge C_1^{k_0-1}\rho^{-\ell} $,
      we can write this as 
       \begin{align*}
    \diam(X^{n+k})
  &\le  C_1^{k_0-1} \rho^{\,jk_0}\diam(X^n)
    \le C\rho^{\,jk_0 + \ell} \diam(X^n)
    \\
    &= C\rho^k \diam(X^n),
   \end{align*}  
  as desired.

  To shown the reverse implication, assume that
  \eqref{eq:qv_approx4p} holds for constants $C>0$ and 
  $\rho\in (0,1)$. We can then choose $k_0\in \N$ sufficiently large
  such that $\lambda\coloneqq C\rho^{k_0}<1$. With
  these choices,  condition~\ref{item:qv_approx4} 
  in Definition~\ref{def:qv_approx} is clearly satisfied.
\end{proof}

A very similar argument shows that  condition~\ref{item:qv_approx3} in Definition~\ref{def:qv_approx} implies an inequality  opposite to   \eqref{eq:qv_approx4p}. Namely, there exists a constant $\tau\in (0,1) $
such that
  \begin{equation}
    \label{eq:qv_approx8}
    \diam(X^{n+k})\ge   \tau^k \diam(X^n)
    \end{equation}
  for all $k,n\in \N_0$,  $X^n\in \X^n$, $X^{n+k}\in\X^{n+k}$ with
  $X^n\cap X^{n+k}\neq \emptyset$.

The previous lemma implies that in a quasi-visual approximation
the diameter of tiles tend to $0$ uniformly with their level.

\begin{cor}
  \label{cor:qv_tiles_shrink}
  Let $\{\X^n\}$ be a quasi-visual approximation of a bounded
  metric space $S$. Then
  \begin{equation*}
    \max\{\diam(X) : X\in \X^n\} \to 0
    \text{ as } n\to \infty. 
  \end{equation*}
\end{cor}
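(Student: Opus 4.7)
The plan is to derive this as an immediate corollary of Lemma~\ref{lem:sub_shrink}, exploiting the normalization $\X^0=\{S\}$ built into Definition~\ref{def:qv_approx}.

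Specifically, since $\X^0=\{S\}$, the unique $0$-tile is $X^0=S$ itself. By definition, every tile $X^n\in\X^n$ is a (nonempty) subset of $S$, so
\[
X^0\cap X^n=S\cap X^n=X^n\ne\emptyset.
\]
Hence Lemma~\ref{lem:sub_shrink} applies to the pair $(X^0,X^n)$ and yields constants $C>0$ and $\rho\in(0,1)$, independent of $n$ and of the choice of $X^n$, such that
\[
\diam(X^n)\le C\rho^n\diam(S).
\]
Taking the maximum over all $X^n\in\X^n$ gives
\[
\max\{\diam(X):X\in\X^n\}\le C\rho^n\diam(S).
\]
Since $S$ is bounded, $\diam(S)<\infty$, and since $\rho\in(0,1)$, the right-hand side tends to $0$ as $n\to\infty$, which proves the claim.

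There is no real obstacle to the argument; the whole content of the corollary is the observation that the assumption $\X^0=\{S\}$ provides a single ``common ancestor'' for every tile at every level, so the geometric-shrinking estimate of Lemma~\ref{lem:sub_shrink} automatically becomes uniform over $\X^n$. If one wished to avoid invoking the reformulation (iv'), one could equivalently iterate condition~\ref{item:qv_approx4} from Definition~\ref{def:qv_approx} directly, choosing intermediate tiles $X^{jk_0}\in\X^{jk_0}$ for $j=0,1,\dots$ containing a fixed point of $X^n$; this produces the same geometric decay $\diam(X^n)\lesssim \lambda^{\lfloor n/k_0\rfloor}\diam(S)\to 0$.
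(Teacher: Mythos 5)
Your proof is correct and takes essentially the same route as the paper: both apply Lemma~\ref{lem:sub_shrink} with the single $0$-tile $X^0=S$ (which meets every tile of every level) to obtain $\diam(X^n)\le C\rho^n\diam(S)\to 0$. Nothing further is needed.
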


\begin{proof}
  Indeed, by Lemma~\ref{lem:sub_shrink} we know that there exist
  constants $C>0$ and $\rho\in (0,1)$ with
  \begin{equation*}
    \diam(X^n) \leq C \rho^n \diam(X^0)
  \end{equation*}
  for all  $n\in \N_0$ and $X^n\in \X^n$. Here $X^0=S$ is the
  only $0$-tile. The statement follows. 
\end{proof}

We record a special situation when a
  sequence of coverings forms a quasi-visual approximation.
 
 \begin{lemma}\label{lem:visual} 
 Let $S$ be a bounded metric space, and let  
  $\{\X^n\}_{n\in\N_0}$  be a sequence of finite covers $\X^n$,
  $n\in\N_0$, of $S$ by some of its subsets. 
  Suppose  that $\X^0=\{S\}$, and that there exists a constant $\delta\in (0,1)$ such that the following conditions are true: 
  \begin{enumerate}
  \item 
    \label{item:visual1}
       $\diam(X) \asymp \delta^n$
  for all $n\in \N_0$ and  $X \in \X^n$.
  
  \item 
    \label{item:visual2}
      $\dist(X,Y) \gtrsim \delta^n$
      for all  $n\in \N_0$ and $X,Y\in \X^n$ with 
      $X\cap Y=\emptyset$.
      \end{enumerate}  
      Here we require  that the implicit constants are independent of $n$, $X$, and $Y$. 
      Then  $\{\X^n\}_{n\in\N_0}$ is a quasi-visual approximation of $S$. 
   \end{lemma}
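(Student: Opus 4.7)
The plan is to verify each of the four conditions in Definition~\ref{def:qv_approx} directly from the two hypotheses of the lemma. The key observation is that under the assumptions, every tile $X\in\X^n$ satisfies $\diam(X)\asymp\delta^n$ with an implicit constant independent of $n$, so comparisons between diameters of tiles reduce to comparisons between powers of $\delta$.

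First I would dispatch conditions~\ref{item:qv_approx1} and~\ref{item:qv_approx2} of Definition~\ref{def:qv_approx}. For~\ref{item:qv_approx1}, if $X,Y\in\X^n$ with $X\cap Y\neq\emptyset$, then hypothesis~\ref{item:visual1} gives $\diam(X)\asymp\delta^n\asymp\diam(Y)$, so $\diam(X)\asymp\diam(Y)$ with a constant depending only on $C(\asymp)$ from~\ref{item:visual1}. For~\ref{item:qv_approx2}, if $X,Y\in\X^n$ are disjoint, hypothesis~\ref{item:visual2} yields $\dist(X,Y)\gtrsim\delta^n$, while~\ref{item:visual1} gives $\diam(X)\lesssim\delta^n$; combining these inequalities produces $\dist(X,Y)\gtrsim\diam(X)$.

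Next I would handle condition~\ref{item:qv_approx3}. For $X\in\X^n$ and $Y\in\X^{n+1}$, hypothesis~\ref{item:visual1} shows both $\diam(X)\asymp\delta^n$ and $\diam(Y)\asymp\delta^{n+1}$. Since $\delta^{n+1}=\delta\cdot\delta^n$ and $\delta$ is a fixed constant in $(0,1)$, this yields $\diam(X)\asymp\diam(Y)$ regardless of whether $X\cap Y=\emptyset$. Finally, for condition~\ref{item:qv_approx4}, let $C_1\geq 1$ denote the constant from~\ref{item:visual1}, so that $\diam(X)\geq \delta^n/C_1$ and $\diam(Y)\leq C_1\delta^{n+k_0}$ whenever $X\in\X^n$ and $Y\in\X^{n+k_0}$. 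Then
\begin{equation*}
\diam(Y)\leq C_1^2\delta^{k_0}\diam(X).
\end{equation*}
I would simply choose $k_0\in\N$ large enough that $\lambda\coloneqq C_1^2\delta^{k_0}<1$, which is possible because $\delta\in(0,1)$ and $C_1$ does not depend on $n$. This verifies~\ref{item:qv_approx4} with the constants $k_0$ and $\lambda$ just chosen.

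Since all the estimates only invoke the two hypotheses and the fixed constants $\delta$ and $C_1$, all implicit constants are independent of $n$, $X$, and $Y$, as required. There is no genuine obstacle here: the proof is a short, direct verification. The only mildly delicate point is the selection of $k_0$ for~\ref{item:qv_approx4}, which must be made explicit to ensure $\lambda<1$.
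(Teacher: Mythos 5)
Your proof is correct and follows essentially the same route as the paper: conditions (i)--(iii) are verified identically, and for (iv) you simply inline the choice of $k_0$ with $C_1^2\delta^{k_0}<1$ instead of appealing to the equivalent condition \eqref{eq:qv_approx4p} from Lemma~\ref{lem:sub_shrink} as the paper does; the content is the same.
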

   It is natural to call  $\{\X^n\}$ as in this lemma  a {\em visual approximation} of $S$, because its tiles have properties that are analogous to  properties of tiles of an expanding Thurston map with respect to a {\em visual metric} (see 
   \cite[Proposition 8.4]{BM}).  We chose the term 
   {\em quasi-visual approximation} in 
   Definition~\ref{def:qv_approx}, because  there the conditions   are more relaxed in comparison to the more stringent ones  for  a visual approximation. 

\begin{proof}[Proof of Lemma~\ref{lem:visual}]
We have to verify conditions 
\ref{item:qv_approx1}--\ref{item:qv_approx4} in Definition~\ref{def:qv_approx}.  In the following, all implicit constants are independent of the tiles under consideration and their levels.

Fix $n\in \N_0$,  
 and  consider  arbitrary tiles $X,Y\in \X^n$. Then
  \begin{equation*}
    \diam(X)
    \asymp
    \delta^n
    \asymp
    \diam(Y),
  \end{equation*}
  by \ref{item:visual1}. Thus condition \ref{item:qv_approx1} in 
  Definition~\ref{def:qv_approx} is satisfied.

  If $X\cap Y=\emptyset$, then by   \ref{item:visual2} we have 
  $$ \dist (X,Y)\gtrsim \delta^n \asymp \diam (X),$$ 
 and so condition \ref{item:qv_approx2} in Definition~\ref{def:qv_approx}
  is satisfied.

  Let $Z\in \X^{n+1}$ be
  arbitrary. Then
  \begin{equation*}
    \diam(Z)
    \asymp
    \delta^{n+1}
    \asymp
    \delta \diam(X),
  \end{equation*}
  by \ref{item:visual1}. Condition ~\ref{item:qv_approx3}
  in 
  Definition~\ref{def:qv_approx} immediately follows (with a constant depending on $\delta$ in addition to the other ambient parameters).

  Similarly, if $Z\in \X^{n+k}$ for some $k\in \N_0$, then
  \begin{equation*}
    \diam(Z)
    \asymp
    \delta^{n+k}
    \asymp
    \delta^k \diam(X)
  \end{equation*}
  by  \ref{item:visual1}.
 This implies that condition~\eqref{eq:qv_approx4p} in Lemma~\ref{lem:sub_shrink} is true, which is equivalent to \ref{item:qv_approx4} in 
  Definition~\ref{def:qv_approx}. 

The statement follows.
\end{proof}

The covers $\X^n$ in a quasi-visual approximation provide a
discrete approximation of $S$ that improves with larger $n$. We
introduce a quantity that records
 at which level this approximation allows us to distinguish
two given points for the first time.

\begin{definition}
  \label{def:mxy} 
  Let $\{\X^n\}_{n\in\N_0}$ be a quasi-visual approximation of 
  the bounded metric space $S$. For two distinct points $x,y\in S$ we define
  \begin{align*} 
    m(x,y)
    \coloneqq 
    \max\{n\in \N_0: {}&\text{there exist sets $X,Y\in \X^n$}
    \\ 
                       &\text{with $x\in X$, $y \in
                         Y$, and $X\cap Y\ne \emptyset$}\}. 
  \end{align*} 
 \end{definition}

Note that by Corollary~\ref{cor:qv_tiles_shrink}
we have 
$$ \lim_{n\to\infty} \max_{X\in \X^n} \diam(X)=0,$$ which implies that  $m(x,y)\in \N_0$ is well defined for $x,y\in S$ with $x\ne y$.

\begin{lemma}
  \label{lem:qv_metric}
  Let $\{\X^n\}_{n\in\N_0}$ be a quasi-visual approximation of the
 bounded  metric space $(S,d)$. Then there is a constant $C(\asymp)$ such
  that for all distinct $x,y\in S$ we have
  \begin{equation*}
    d(x,y) \asymp \diam(X^m),
  \end{equation*}
  where $m=m(x,y)$ and $X^m\in \X^m$ is an arbitrary $m$-tile
  that contains $x$.
\end{lemma}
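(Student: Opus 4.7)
The plan is to prove $d(x,y)\asymp \diam(X^m)$ by separately establishing the $\lesssim$ and $\gtrsim$ directions, each obtained by unwinding the definition of $m=m(x,y)$ against the four axioms of a quasi-visual approximation. In both directions the comparison to the specific (arbitrary) tile $X^m$ containing $x$ will be routed through the point $x$ itself, so that the freedom in the choice of $X^m$ is absorbed into the implicit constants.

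For the upper bound I would use the existence clause in the definition of $m$: there are tiles $A,B\in \X^m$ with $x\in A$, $y\in B$, and $A\cap B\neq\emptyset$. Choosing any $z\in A\cap B$, the triangle inequality gives $d(x,y)\leq \diam(A)+\diam(B)$. Condition \ref{item:qv_approx1} applied to the intersecting pair $A,B$ yields $\diam(B)\asymp\diam(A)$, and the same condition applied to $A$ and $X^m$ (both of which contain $x$) gives $\diam(A)\asymp \diam(X^m)$. Chaining these yields $d(x,y)\lesssim \diam(X^m)$.

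For the lower bound I would exploit the maximality of $m$: no pair of tiles from $\X^{m+1}$ containing $x$ and $y$ respectively can intersect, for otherwise $m$ could be increased by $1$. Since $\X^{m+1}$ covers $S$, I can pick $A'\in \X^{m+1}$ with $x\in A'$ and $B'\in \X^{m+1}$ with $y\in B'$; by maximality $A'\cap B'=\emptyset$, and condition \ref{item:qv_approx2} then gives
\[
   d(x,y)\;\geq\;\dist(A',B')\;\gtrsim\;\diam(A').
\]
Finally, since $x\in A'\cap X^m$ and $A',X^m$ live on consecutive levels $m+1$ and $m$, condition \ref{item:qv_approx3} turns this into $\diam(A')\asymp \diam(X^m)$, and hence $d(x,y)\gtrsim \diam(X^m)$.

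I do not anticipate a genuine obstacle: the entire argument is direct bookkeeping with the structural axioms, using Corollary~\ref{cor:qv_tiles_shrink} only implicitly to guarantee that $m(x,y)$ is well defined. The only mildly delicate step is making the correct level switch from $m$ to $m+1$ at the right moment of the lower bound, so that maximality (not mere existence) is what licenses the disjointness needed to invoke \ref{item:qv_approx2}.
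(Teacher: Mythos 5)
Your proposal is correct and follows essentially the same route as the paper's proof: the upper bound via the triangle inequality through a common point of the two intersecting $m$-tiles together with condition \ref{item:qv_approx1}, and the lower bound by passing to level $m+1$, invoking the maximality of $m$ to get disjointness, and then applying conditions \ref{item:qv_approx2} and \ref{item:qv_approx3}. No gaps.
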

 
\begin{proof}
  Let $x,y\in S$ with $x\ne y$ be  arbitrary,  and
  $m=m(x,y)\in \N_0$ be given as in Definition~\ref{def:mxy}. This
  means that there are $m$-tiles $X,Y\in \X^m$ with $x\in X$, $y\in Y$,
  and $X\cap Y\neq \emptyset$. Let $X^m\in \X^m$ be an arbitrary 
  $m$-tile that contains $x$. Then 
  \begin{equation*}
    d(x,y) \leq \diam(X) + \diam(Y) \asymp \diam(X)\asymp
    \diam (X^m),
  \end{equation*}
  where we used condition~\ref{item:qv_approx1} in 
  Definition~\ref{def:qv_approx}.

  On the other hand, we can find $X',Y'\in \X^{m+1}$ with $x\in X'$ and
  $y\in Y'$. The definition of $m(x,y)$ now implies that $X'$ and
  $Y'$ must be disjoint. Thus
  \begin{equation*}
    d(x,y)\geq \dist(X',Y') \gtrsim \diam(X') \asymp \diam(X^m).
  \end{equation*}
 Here we used   \ref{item:qv_approx2} and \ref{item:qv_approx3} in 
  Definition~\ref{def:qv_approx}. The statement follows.
\end{proof}

%
%
%

\subsection*{Quasi-visual approximations and quasisymmetries}
\label{sec:quasi-visu-quas}

The following proposition  relates the concept of a quasi-visual approximation with quasisymmetries.   This is the main reason we consider such
approximations. 

\begin{proposition}
  \label{prop:qv-qs}
  Let $(S,d)$ and $(T, \rho)$ be bounded metric spaces, the map $F\: S\ra T$ be a
  bijection, and $\{\X^n\}_{n\in \N_0}$ be a quasi-visual approximation of
  $(S,d)$. Then $F\: S\ra T$ is a quasisymmetry if and only if
  $\{F(\X^n)\}_{n\in \N_0}$ is a quasi-visual approximation of
  $(T,\rho)$. \end{proposition}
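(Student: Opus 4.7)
The plan is to prove both implications. The ``only if'' direction is a routine verification that the $F$-image of a quasi-visual approximation is again quasi-visual, using only standard distortion properties of quasisymmetries. The ``if'' direction is the substantive content; I would use Lemma~\ref{lem:qv_metric} to reconstruct the metrics of $S$ and $T$ from the combinatorial data of the two approximations, and translate the quasisymmetry condition into a comparison of combinatorial levels.

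For the ``only if'' direction, assume $F$ is an $\eta$-quasisymmetry and verify conditions \ref{item:qv_approx1}--\ref{item:qv_approx4} for $\{F(\X^n)\}$. The workhorse is the standard fact that if $A\cap B\ne \emptyset$ and $\diam(A)\le C\diam(B)$, then $\diam(F(A))\le 2\eta(2C)\diam(F(B))$, obtained by fixing a common point $p\in A\cap B$ and applying \eqref{eq:defqs} to triples $(p,a,q)$ where $a\in A$ varies and $q\in B$ nearly realizes $\diam(F(B))$. Conditions \ref{item:qv_approx1} and \ref{item:qv_approx3} follow immediately. For \ref{item:qv_approx2}, given disjoint $X,Y\in \X^n$ with $\dist(X,Y)\gtrsim \diam(X)$, pick $x,x'\in X$ with $\rho(F(x),F(x'))$ close to $\diam(F(X))/2$ and any $y\in Y$; since $|x-x'|\lesssim |x-y|$, applying \eqref{eq:defqs} to $(x,x',y)$ yields $\rho(F(x),F(y))\gtrsim \diam(F(X))$. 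For \ref{item:qv_approx4}, combine \eqref{eq:qv_approx4p} with the diameter bound above, choosing $k_0$ large enough that $2\eta(2C\rho^{k_0})<1$; this is possible because $\eta(0)=0$.

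For the ``if'' direction, assume that $\{F(\X^n)\}$ is also a quasi-visual approximation, this time of $(T,\rho)$. Since $F$ is a bijection sending each $\X^n$ onto $F(\X^n)$ and preserving intersection patterns, the integer $m(x,y)$ from Definition~\ref{def:mxy} computed in $S$ agrees with the corresponding integer for $\{F(\X^n)\}$ evaluated at $(F(x),F(y))$. Now suppose $d(x,y)\le t\,d(x,z)$; set $m=m(x,y)$, $n=m(x,z)$, and pick $X^m\in \X^m$, $X^n\in \X^n$ both containing $x$. Lemma~\ref{lem:qv_metric} in $(S,d)$ gives $\diam(X^m)\lesssim t\,\diam(X^n)$. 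Combining \eqref{eq:qv_approx4p} with \eqref{eq:qv_approx8} applied to the intersecting pair $X^m,X^n$ produces a two-sided control of the level difference: in the regime $m\le n$ one obtains an upper bound $n-m\le K_1\log(1/t)+O(1)$, while in the regime $m>n$ one obtains a lower bound $m-n\ge K_2\log(1/t)-O(1)$. Lemma~\ref{lem:qv_metric} in $(T,\rho)$ together with the corresponding bounds for $\{F(\X^n)\}$ then translates each regime back into an estimate $\rho(F(x),F(y))\le \eta(t)\,\rho(F(x),F(z))$ with $\eta(t)\asymp t^\gamma$ for some $\gamma>0$ depending only on the constants of the two approximations. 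Since $F$ is a bijection satisfying \eqref{eq:defqs}, the discussion following the definition of quasisymmetry shows that $F$ is then a quasisymmetric homeomorphism.

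The main technical point is that \emph{both} the upper bound \eqref{eq:qv_approx4p} and the lower bound \eqref{eq:qv_approx8} are needed: the former forces the level gap to be small for small $t$ in the regime $m\le n$, while the latter forces it to be large for small $t$ in the regime $m>n$. Together they ensure $\eta(t)\to 0$ as $t\to 0^+$, which is exactly what is required for $\eta$ to be a homeomorphism of $[0,\infty)$.
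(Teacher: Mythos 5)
Your argument is correct in substance and is essentially the paper's proof: the paper merely packages the same computations differently, pulling the metric back to $S$ via $\tilde d(x,y)\coloneqq\rho(F(x),F(y))$ so that both directions reduce to Lemma~\ref{lem:qv_qs_inv} and Lemma~\ref{lem:qv_qs} for the identity map, whereas you inline those two lemmas directly for $F$. The ingredients are identical: the diameter-distortion estimate of Lemma~\ref{lem:qs_diam}, the observation that $m(x,y)$ for $\{\X^n\}$ equals $m(F(x),F(y))$ for $\{F(\X^n)\}$, Lemma~\ref{lem:qv_metric} on both sides, and the two-case comparison of levels via \eqref{eq:qv_approx4p} and \eqref{eq:qv_approx8}.

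Two small points to tighten. First, in your verification of condition~\ref{item:qv_approx2} the auxiliary point $x'$ must be chosen \emph{after} an arbitrary $x\in X$ is given (with $d(x,x')\asymp\diam(X)$, so that $\rho(F(x),F(x'))\asymp\diam(F(X))$ uniformly in $x$ by Lemma~\ref{lem:qs_diam}); fixing one favorable pair $x,x'$ in advance does not control $\dist(F(X),F(Y))$, which is an infimum over all $x\in X$. Second, in the regime $m\le n$ the correct conclusion is $m-n\ge K_1\log(1/t)-O(1)$, i.e.\ $n-m\le -K_1\log(1/t)+O(1)$, not $n-m\le K_1\log(1/t)+O(1)$; the sign matters, since it is precisely the lower bound on $m-n$ (in both regimes) that forces $\tau^{m-n}\to 0$ and hence $\eta(t)\to 0$ as $t\to 0^+$.
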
 Here we use the obvious notation
$F(\X^n)\coloneqq \{F(X): X\in \X^n\}$.

%

The proof of Proposition~\ref{prop:qv-qs} requires some
preparation. We first remind the reader of the following
well-known fact. 

\begin{lemma}
  \label{lem:qs_diam}
  Let $(S,d)$ and $(T,\rho)$ be bounded metric spaces, and the map
  $F\colon S\to T$ be an $\eta$-quasisymmetry. Let
  $A\subset S$ be a subset of $S$, and $x,y\in A$ be points satisfying
  $d(x,y) \asymp \diam(A)$ with  a constant
  $C_1=C(\asymp)$. Then
  \begin{equation*}
    \diam(F(A)) \asymp \rho (F(x),F(y)),
  \end{equation*}
  where $C(\asymp)=C(C_1,\eta)$. 
\end{lemma}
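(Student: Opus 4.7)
The plan is to prove the two estimates $\rho(F(x),F(y))\lesssim \diam(F(A))$ and $\diam(F(A))\lesssim \rho(F(x),F(y))$ separately, where the implicit constants will depend only on $C_1$ and $\eta$. The first inequality is immediate (and in fact holds with constant $1$): since $F(x),F(y)\in F(A)$, we have $\rho(F(x),F(y))\le \diam(F(A))$.

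For the reverse inequality, I would take arbitrary points $u,v\in F(A)$, write $u=F(a)$ and $v=F(b)$ with $a,b\in A$, and bound $\rho(u,v)$ in terms of $\rho(F(x),F(y))$ via the triangle inequality
\[
  \rho(F(a),F(b))\le \rho(F(a),F(x))+\rho(F(x),F(b)).
\]
To estimate each summand, observe that the hypothesis $d(x,y)\asymp \diam(A)$ with constant $C_1$ gives $\diam(A)\le C_1 d(x,y)$, and since $a,b,y\in A$, both $d(x,a)$ and $d(x,b)$ are at most $C_1 d(x,y)$. Applying the defining implication \eqref{eq:defqs} of an $\eta$-quasisymmetry (with the roles of $y$ and $z$ in that implication played by $a$ and $y$, respectively, and then by $b$ and $y$) yields
\[
  \rho(F(x),F(a))\le \eta(C_1)\rho(F(x),F(y)),\qquad \rho(F(x),F(b))\le \eta(C_1)\rho(F(x),F(y)).
\]
Combining these with the triangle inequality gives $\rho(F(a),F(b))\le 2\eta(C_1)\rho(F(x),F(y))$, and taking the supremum over $a,b\in A$ yields $\diam(F(A))\le 2\eta(C_1)\rho(F(x),F(y))$, as required.

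There is no real obstacle here: the argument is a textbook application of the quasisymmetry definition, and the constant $C(\asymp)=2\eta(C_1)$ depends only on $C_1$ and $\eta$ as claimed. The only minor point to note is that the first half of the hypothesis $d(x,y)\asymp \diam(A)$, namely $d(x,y)\le C_1\diam(A)$, is not needed; only the bound $\diam(A)\le C_1 d(x,y)$ is used.
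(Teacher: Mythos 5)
Your proof is correct and follows essentially the same argument as the paper: the paper also uses $d(x,z)\le\diam(A)\le C_1 d(x,y)$, applies the quasisymmetry implication to get $\rho(F(x),F(z))\le\eta(C_1)\rho(F(x),F(y))$, and concludes via $\diam(F(A))\le 2\sup_{z}\rho(F(x),F(z))$, which is precisely your triangle-inequality step. Your closing observation that only the bound $\diam(A)\le C_1 d(x,y)$ is used is also accurate.
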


\begin{proof} Under the given assumptions, let $z\in A$ be arbitrary. Then
  \begin{equation*}
    d(x,z) \leq \diam(A) \leq C_1d(x,y).
  \end{equation*}
  We write $x'=F(x)$,  $y'=F(y)$, $z'=F(z)$, and $A'=F(A)$. Since
  $F$ is an  $\eta$-quasisymmetry, we have
  $    \rho(x',z') \leq \eta(C_1)\rho (x',y')$.
  It follows that 
  \begin{equation*}
    \diam(A') \leq 2\sup_{z'\in A'}\rho(x',z') \leq
    2\eta(C_1)\rho(x',y'). 
  \end{equation*}
  On the other hand, $\rho(x',y') \leq \diam(A')$. The statement follows  with $C_2=C(\asymp)=\max\{1, 2\eta(C_1)\}$.
\end{proof}

 Quasi-visual approximations are 
invariant under ``quasisymmetric cha\-nges'' of the metric.

\begin{lemma}
  \label{lem:qv_qs_inv}
  Let $d_1$ and $d_2$ be bounded metrics on a set $S$. Suppose
  that a sequence of finite covers $\{\X^n\}$ of $S$ is a
  quasi-visual approximation of the metric space $(S,d_1)$ and
  that the identity map $\id\colon(S,d_1) \to (S,d_2)$ is a
  quasisymmetric homeomorphism. Then $\{\X^n\}$ is also a
  quasi-visual approximation of $(S,d_2)$.
  \end{lemma}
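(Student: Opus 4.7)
The plan is to verify the four conditions of Definition~\ref{def:qv_approx} for the cover $\{\X^n\}$ with respect to $d_2$, using Lemma~\ref{lem:qs_diam} as the main tool to convert the $d_1$-assumptions into $d_2$-conclusions. Write $\diam_i$ and $\dist_i$ for the diameter and distance with respect to $d_i$. A useful preliminary observation: for any bounded set $A\subset S$ with $\diam_1(A)>0$ and any $x\in A$, the triangle inequality forces $\sup_{x'\in A} d_1(x,x')\ge \diam_1(A)/2$, so one can pick $x'\in A$ with $d_1(x,x')\asymp \diam_1(A)$ (constant $C_1=2$). Lemma~\ref{lem:qs_diam} then yields $\diam_2(A)\asymp d_2(x,x')$. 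This lets us systematically replace $d_1$- and $d_2$-diameters of individual tiles by honest distances between suitable pairs of points.

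For conditions \ref{item:qv_approx1} and \ref{item:qv_approx3}, fix tiles $X,Y$ (from $\X^n$ or from $\X^n$ and $\X^{n+1}$) with $X\cap Y\ne\emptyset$, pick $z\in X\cap Y$, and then choose $x\in X$, $y\in Y$ with $d_1(x,z)\asymp \diam_1(X)$ and $d_1(y,z)\asymp \diam_1(Y)$. The $d_1$-hypothesis gives $d_1(x,z)\asymp d_1(y,z)$; applying the quasisymmetry inequality \eqref{eq:defqs} in both directions to the triples $(z,x,y)$ and $(z,y,x)$ yields $d_2(x,z)\asymp d_2(y,z)$, and the preliminary observation transports this to $\diam_2(X)\asymp \diam_2(Y)$. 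For condition \ref{item:qv_approx4}, it is more convenient to use the equivalent reformulation from Lemma~\ref{lem:sub_shrink}: $\diam_1(X^{n+k})\le C\rho^k\diam_1(X^n)$ whenever $X^n\cap X^{n+k}\ne\emptyset$. The same point-selection argument produces $d_1(y,z)\lesssim \rho^k d_1(x,z)$, and the $\eta$-quasisymmetry converts this into $d_2(y,z)\le \eta(C'\rho^k)d_2(x,z)$. Since $\eta$ is a homeomorphism of $[0,\infty)$ with $\eta(0)=0$, the factor $\eta(C'\rho^k)$ can be made arbitrarily small by choosing $k$ large, which via \eqref{eq:qv_approx4p} produces condition \ref{item:qv_approx4} for $d_2$ with suitable new constants.

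The main obstacle is condition \ref{item:qv_approx2}, because it requires transferring a \emph{distance-between-sets} lower bound from $d_1$ to $d_2$, whereas Lemma~\ref{lem:qs_diam} only speaks about diameters. To handle this, fix disjoint $X,Y\in\X^n$ and arbitrary $x\in X$, $y\in Y$; the goal is $d_2(x,y)\gtrsim \diam_2(X)$. Choose $x'\in X$ with $d_1(x,x')\asymp \diam_1(X)$. The $d_1$-version of \ref{item:qv_approx2} then gives
\[
d_1(x,y)\ge \dist_1(X,Y)\gtrsim \diam_1(X)\gtrsim d_1(x,x'),
\]
so the $\eta$-quasisymmetry applied to the triple $(x,x',y)$ yields $d_2(x,x')\lesssim d_2(x,y)$. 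Since $d_2(x,x')\asymp \diam_2(X)$ by the preliminary observation, we conclude $\diam_2(X)\lesssim d_2(x,y)$. Taking the infimum over $x\in X$, $y\in Y$ produces condition \ref{item:qv_approx2} for $d_2$, completing the verification.
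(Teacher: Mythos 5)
Your proposal is correct and follows essentially the same route as the paper: conditions \ref{item:qv_approx1}--\ref{item:qv_approx4} are verified one by one by choosing points realizing the diameters up to a fixed factor and pushing the resulting distance inequalities through the quasisymmetry, with exactly the paper's trick for \ref{item:qv_approx2} (comparing $d_1(x,x')$ to $d_1(x,y)$ and taking the infimum) and the same ``make $\eta(\cdot)$ small by iterating'' step for \ref{item:qv_approx4}. The only cosmetic point is that the supremum in your preliminary observation need not be attained, so the constant there should be taken slightly larger than $2$ (the paper uses $3$); this changes nothing.
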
 

\begin{proof}
 Under the given assumptions,  we need to show that
  $\{\X^n\}$ is a quasi-visual approximation of $(S,d_2)$, meaning that it satisfies
  the conditions \ref{item:qv_approx1}--\ref{item:qv_approx4} in
  Definition~\ref{def:qv_approx}.
For $i=1,2$ we denote the diameter of a set $M\sub S$ with respect to the metric $d_i$   by
  $\diam_i(M)$. 
  
  \smallskip
  \ref{item:qv_approx1}
  Let $n\in \N_0$ and $X,Y\in \X^n$ with $X\cap Y\ne \emptyset $ be arbitrary. Then there are points $z\in X\cap Y$, 
  $x\in X$,  and $y\in Y$   with $d_1(x,z)\asymp \diam_1(X)$
  and $d_1(y,z) \asymp \diam_1(Y)$, where $C(\asymp)=3$. Since
  condition  \ref{item:qv_approx1} of
  Definition~\ref{def:qv_approx} is satisfied for $d_1$, we know
  that
  \begin{equation*}
    d_1(x,z) \asymp \diam_1(X) \asymp \diam_1(Y)\asymp d_1(y,z).
  \end{equation*}
  The quasisymmetric equivalence of $d_1$ and $d_2$ together with
  Lemma~\ref{lem:qs_diam} now implies  that
  \begin{equation*}
    \diam_2(X)\asymp d_2(x,z) \asymp d_2(y,z) \asymp \diam_2(Y).
  \end{equation*}
  So $\{\X^n\}$ satisfies \ref{item:qv_approx1} in 
  Definition~\ref{def:qv_approx}
  for $(S,d_2)$. 

  \smallskip
  \ref{item:qv_approx3}
  The verification of this condition  is very similar to the one  for
  \ref{item:qv_approx1} and so we will skip the details.  

  \smallskip 
  \ref{item:qv_approx2}
  Let $n\in \N_0$ and $X,Y\in \X^n$  with $X\cap Y= \emptyset$, as well as 
  $x\in X$ and $y\in Y$ be arbitrary. We denote  the distance  of $X$ and $Y$   with respect to the metric $d_i$   by
  $\dist_i(X,Y)$ for $i=1,2$.  
 We can choose  $x'\in X$ with
  $d_1(x,x') \asymp \diam_1(X)$, where $C(\asymp)=3$. Since
  \ref{item:qv_approx2} holds for $d_1$, we know that
  \begin{equation*}
    d_1(x,x') 
    \leq 
    \diam_1(X) 
    \lesssim
    \dist_1(X,Y) 
    \leq d_1(x,y).
  \end{equation*}
  Quasisymmetric equivalence of $d_1$ and $d_2$ together with Lemma~\ref{lem:qs_diam} now implies
  \begin{equation*}
    \diam_2(X) \asymp d_2(x,x') \lesssim d_2(x,y).
  \end{equation*}
  Taking the infimum over $x\in X$ and $y\in Y$ yields
  $$\diam_2(X) \lesssim \dist_2(X,Y),$$ which is condition 
  \ref{item:qv_approx2} for $d_2$.  

  \smallskip
  \ref{item:qv_approx4}
  Let $\lambda\in (0,1)$ and $k_0\in \N$ be  constants as in 
  condition~\ref{item:qv_approx4} of
  Definition~\ref{def:qv_approx}
    for the metric space $(S,d_1)$.  Let $n,\ell\in \N_0$, $X^n\in \X^n$ and $X^{n+\ell k_0}\in \X^{n+\ell k_0}$ with $X^n \cap X^{n+\ell k_0}\ne \emptyset$ be arbitrary.
    Then we can choose  $x\in  X^n \cap X^{n+\ell k_0} $ and 
    $x'\in X^n$ with $\diam (X^n)\le 3 d_1(x,x')$. 
  Finally,  let $y\in X^{n+\ell k_0}$ be arbitrary.
    
 By repeated application of   \ref{item:qv_approx4} in 
  Definition~\ref{def:qv_approx}  for $(S,d_1)$, we see that 
  \begin{align*}
    d_1(x,y) &\leq \diam_1(X^{n+\ell k_0})\\
    &\le \la^\ell \diam_1(X^{n})\le 3 \la^\ell d_1(x,x').
    \end{align*}
If we assume that  $\id\colon (S,d_1) \to (S,d_2)$ is an $\eta$-quasisymmetry, then it follows that 
  \begin{align*}
    \diam_2 (X^{n+\ell k_0})&\le 2 \sup\{d_2(x,y): y\in  
    X^{n+\ell k_0}\}\\
    & \le 2 \eta (3\la^\ell) d_2(x,x')\le  2 \eta (3\la^\ell)\diam_2(X^n).
    \end{align*}
  Since $\eta\colon [0,\infty) \to [0,\infty)$ is a
  homeomorphism, there exist $\ell_0\in \N$ such that
  $\widetilde{\lambda}\coloneqq 2 \eta (3\la^{\ell_0}) <1$. It follows 
  that with this number $\widetilde{\lambda}$  and  $\widetilde{k}_0 \coloneqq \ell_0 k_0$  
 condition \ref{item:qv_approx4} in 
 Definition~\ref{def:qv_approx} holds for
  $(S,d_2)$. 
\end{proof}

The following result is a converse of the previous lemma. 

\begin{lemma}
  \label{lem:qv_qs}
  Let $d_1$ and $d_2$ be bounded metrics on a set $S$. Suppose that a
  sequence of finite covers $\{\X^n\}$ of $S$ is a quasi-visual
  approximation of  both metric spaces $(S,d_1)$ and
  $(S,d_2)$. Then the identity map $\id\colon(S,d_1) \to (S,d_2)$
  is a quasisymmetric homeomorphism.
\end{lemma}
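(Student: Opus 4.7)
The plan is to verify that the identity map $\id \colon (S, d_1) \to (S, d_2)$ satisfies the quasisymmetry condition \eqref{eq:defqs} with a power-type distortion function $\eta$. The crucial observation is that the quantity $m(x,y)$ from Definition~\ref{def:mxy} is purely combinatorial: it depends only on the sequence $\{\X^n\}$ and on the points $x, y$, not on which metric is placed on $S$. Consequently, $m(x,y)$ takes the same value whether one works in $(S, d_1)$ or in $(S, d_2)$.

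Applying Lemma~\ref{lem:qv_metric} in each metric, for distinct $x, y \in S$ and any $m(x,y)$-tile $X^{m(x,y)}$ containing $x$, one has
\[ d_i(x,y) \asymp \diam_i(X^{m(x,y)}) \text{ for } i = 1, 2. \]
Given three distinct points $x, y, z$, set $m := m(x,y)$ and $n := m(x,z)$, and pick tiles $X^m \in \X^m$ and $X^n \in \X^n$ both containing $x$. Since $X^m \cap X^n \ni x$, the shrinking estimate \eqref{eq:qv_approx4p} and its companion lower bound \eqref{eq:qv_approx8} apply in each metric. With the respective constants written as $C_i, \rho_i, \tau_i$, these yield, for $m \ge n$, the two-sided bound
\[ \tau_i^{m-n} \le \frac{\diam_i(X^m)}{\diam_i(X^n)} \le C_i\, \rho_i^{m-n}, \]
and analogous bounds when $m < n$ with $X^m$ and $X^n$ interchanged.

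From here the proof proceeds by a case analysis on the sign of $m - n$. Suppose $d_1(x,y) \le t\, d_1(x,z)$. If $m \ge n$, combining Lemma~\ref{lem:qv_metric} for $d_1$ with the lower bound $\tau_1^{m-n} \lesssim t$ forces $m - n \gtrsim \log(1/t)$ when $t$ is small; inserting this into the upper bound $\rho_2^{m-n}$ and again invoking Lemma~\ref{lem:qv_metric} for $d_2$ gives $d_2(x,y)/d_2(x,z) \lesssim t^{\alpha}$ with $\alpha := \log(1/\rho_2)/\log(1/\tau_1) > 0$. The case $m < n$ is symmetric and can only arise when $t$ is bounded away from $0$: the upper bound in $d_1$ controls $n - m$ by a constant multiple of $\log t$, and the lower bound in $d_2$ then produces a power estimate $d_2(x,y)/d_2(x,z) \lesssim t^{\beta}$ of the same flavour, with $\beta := \log(1/\tau_2)/\log(1/\rho_1) > 0$.

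Assembling the two cases produces a single distortion function $\eta(t) = C(t^{\alpha} + t^{\beta})$, which is a homeomorphism of $[0, \infty)$, witnessing that $\id$ is $\eta$-quasisymmetric. The substantive content of the argument is packaged in the opening remark that $m(x,y)$ is metric-independent together with Lemma~\ref{lem:qv_metric}; the only delicate step is the bookkeeping needed to combine the estimates for small and large $t$ into a single genuine, monotone homeomorphism valid for all $t > 0$.
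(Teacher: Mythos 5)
Your proposal is correct and follows essentially the same route as the paper's proof: it uses the metric-independence of $m(x,y)$ (which the paper uses implicitly), Lemma~\ref{lem:qv_metric} in each metric, the two-sided shrinking estimates \eqref{eq:qv_approx4p} and \eqref{eq:qv_approx8}, and the same case split on the sign of $m-n$ to produce a power-type distortion function. The only difference is cosmetic (separate constants $C_i,\rho_i,\tau_i$ for the two metrics and $\eta(t)=C(t^\alpha+t^\beta)$ instead of a maximum), so nothing further is needed.
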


\begin{proof}  We have to find  a  homeomorphism  
 $\eta\: [0,\infty) \ra [0,\infty)$ such that
  for all  $t>0$ and $x,y,z\in S$ we have the implication
  \begin{equation} \label{eq:qsimpl}
     d_1(x,y) \leq t d_1(x,z)
    \Rightarrow
    d_2(x,y) \leq \eta(t) d_2(x,z).
  \end{equation}

 So let $t>0$ and $x,y,z\in S$ with $d_1(x,y) \leq t d_1(x,z)$
  be arbitrary. We may assume that $x\neq y$. Then also $x\ne z$.
  
  In the following, we denote by $X^k\in \X^k$ $k$-tiles that
  contain the point $x$ for various $k\in\N_0$.  We also use the
  notation $\diam_1$ and $\diam_2$ for the diameters of sets for
  the metrics $d_1$ and $d_2$, respectively.

By condition \eqref{eq:qv_approx4p} in
  Lemma~\ref{lem:sub_shrink} and by \eqref{eq:qv_approx8} 
  there are constants $\rho,\tau \in (0,1)$ and $C>0$ independent
  of $t,x,y,z$ such that
  \begin{equation}
    \label{eq:quantver}
    \diam_i(X^{k+\ell}) 
    \le
    C \rho^\ell \diam_i(X^{k})
 \end{equation}
 and 
   \begin{equation}
    \label{eq:quantvertau}
    \diam_i(X^{k+\ell}) 
    \ge
     \tau^\ell \diam_i(X^{k})
 \end{equation} for all $k,\ell\in \N_0$ and $i\in \{1,2\}$.

Now let $m=m(x,y)\in \N_0$ and
  $n=m(x,z)\in \N_0$ be as in Definition~\ref{def:mxy}.
    By  Lemma~\ref{lem:qv_metric} there exists a constant 
  $C(\asymp)>0$ independent of $x,y,z$ such that
  \begin{equation}\label{eq:Kimpl}
d_i(x,y) \asymp   \diam_i(X^m)  \text { and } 
d_i(x,z) \asymp   \diam_i(X^n) 
  \end{equation}
  for $i=1,2$.  This means that by  enlarging the original constant $C$ in \eqref{eq:quantver} if necessary (and thus avoiding introducing new constants) we may assume that 
   \begin{equation}\label{eq:Kimpl1}
\frac 1Cd_i(x,y) \le \diam_i(X^m)\le C  d_i(x,y) 
\end{equation}
and
\begin{equation}\label{eq:Kimpl2}
  \frac 1Cd_i(x,z)\le   \diam_i(X^n)\le C  d_i(x,z)
\end{equation}
for $i=1,2$. Together with our assumption $d_1(x,y) \leq t d_1(x,z)$,  this implies in
particular that
\begin{equation}
  \label{eq:diamXmXn}
  \diam_1(X^m) \leq t C^2  \diam_1(X^n).
\end{equation}

   The idea of the proof is now to translate the previous inequalities
  for the metric $d_1$ to  a relation between  $m$, $n$, and  $t$. Using  this for the metric $d_2$, we will obtain  a bound for  $\diam_2(X^m)$ in terms of  $\diam_2(X^n)$. This will lead to the desired estimate by \eqref{eq:Kimpl1} and   \eqref{eq:Kimpl2}. We consider two cases.


  \smallskip
  \emph{Case 1:} $m\leq n$. Then by
  \eqref{eq:diamXmXn} and \eqref{eq:quantver} we
  have \begin{align*} \diam_1(X^m) &\leq tC^2 \diam_1(X^n) = tC^2
    \diam_1(X^{m+(n-m)})
         \\
                                   &\leq tC^3\rho^{n-m}
                                     \diam_1(X^m),
  \end{align*}
 and so  $t C^3\rho^{n-m}\geq 1$. This implies
  \begin{equation}\label{eq:s1}
    m-n
    \ge 
    r(t)
    \coloneqq
   -\frac{\log(tC^3)}{\log(1/\rho)}.
  \end{equation}
  Then it follows that
    \begin{align*}
   d_2(x,y)& \le C \diam_2(X^m) &&    \text{by \eqref{eq:Kimpl1}}\\
    &\leq C \tau^{-(n-m)}\diam_2(X^{m+(n-m)}) &&\text{by \eqref{eq:quantvertau}}\\
    &=C \tau^{m-n}\diam_2(X^{n}) && \\
    &\le C \tau^{r(t)}\diam_2(X^n) &&\text{by \eqref{eq:s1}}\\
       &\le C^2\tau^{r(t)} d_2(x,z) &&\text{by \eqref{eq:Kimpl2}}\\
    &=\eta_1(t) d_2(x,z), &&
     \end{align*}
where we set $\eta_1(t)= C^2\tau^{r(t)}$. If we define $\eta_1(0)=0$, then $\eta_1\:[0,\infty)\ra [0,\infty)$ is a homeomorphism as follows from the expression 
for  $r(t)$ in \eqref{eq:s1}.

   \smallskip
  \emph{Case 2:} $m> n$. 
 Then  by  \eqref{eq:diamXmXn} and  \eqref{eq:quantvertau} we have 
   \begin{align*}
   tC^2 \diam_1(X^n)  &\geq 
    \diam_1(X^m) = \diam_1(X^{n+(m-n)}) \\
    &\geq 
   \tau^{m-n} \diam_1(X^n), 
  \end{align*}
and so $tC^2\ge \tau^{m-n}$. This implies
\begin{equation}\label{eq:s0}
   m-n
    \ge 
    s(t)
    \coloneqq
  - \frac{\log(tC^2)}{\log(1/\tau)}.
\end{equation}

Then  it follows  that 
 \begin{align*}
   d_2(x,y)& \le C \diam_2(X^m) =C  \diam_2(X^{n+(m-n)})
   &&\text{by \eqref{eq:Kimpl1}}
    \\
    &\leq  C^2\rho^{m-n}\diam_2(X^{n}) &&\text{by  \eqref{eq:quantver}}
    \\
    &\le  C^2 \rho^{s(t)} \diam_2(X^n)
    && \text{by \eqref{eq:s0}}\\
    &\le C^3 \rho^{s(t)} d_2(x,z)
    && \text{by  \eqref{eq:Kimpl2}}\\
    &=\eta_2(t) d_2(x,z), &&
     \end{align*}
where we set $\eta_2(t)=C^3 \rho^{s(t)}$. If we define 
$\eta_2(0)=0$, then again $\eta_2\:[0,\infty) \ra [0,\infty)$ is a homeomorphism.

If we now set $\eta(t)=\max\{\eta_1(t), \eta_2(t)\}$ for $t\ge 0$, then the previous considerations show that  
 $ \eta\:[0,\infty)\ra [0, \infty)$
is a homeomorphism for which implication \eqref{eq:qsimpl} is true.
The statement follows. 
\end{proof}

Parts of the  previous  argument follow ideas from the proof of 
\cite[Lemma~18.10]{BM} which go back to
\cite[Theorem~4.2]{Me02}. In an earlier version of this paper, we only claimed {\em weak quasisymmetry} of the identity map  (corresponding to Case 1 in the previous proof).  Angela Wu observed  that one can actually show that it is a quasisymmetry. 

If one carefully traces through the proof, then one can see 
that the distortion function $\eta$ can be chosen to have 
the form $$\eta(t)=K\max\{t^\alpha, t^{1/\alpha}\}$$ with a constant $K\ge 1$ and 
$\alpha=\log(\rho)/\log(\tau) \in (0,1]$ (note that $0<\tau\le \rho$ as follows from 
  \eqref{eq:quantver} and 
  \eqref{eq:quantvertau}).
 Quasisymmetries  with such a distortion function are called
\emph{power quasisymmetries}. It is no surprise that our map is of this type; essentially, this is  implied by
\cite[Theorem~11.3]{He}.

Combining Lemma~\ref{lem:qv_qs_inv} and Lemma~\ref{lem:qv_qs}, we
can now establish the main result of this subsection.

\begin{proof}[Proof of Proposition~\ref{prop:qv-qs}] We define a  distance function  $\tilde d$ on $S$ by setting 
$$ \tilde d(x,y)=\rho(F(x), F(y))$$ 
for $x,y\in S$. Since $F\: S\ra T$ is a bijection and $\rho$ is a bounded metric on $T$, it is clear that $\tilde d$ is a bounded metric on $S$. Moreover, the map $F\: (S, \tilde d)\ra (T,\rho)$ is an isometry.

Now suppose   $F$ is a quasisymmetry from $(S,d)$ onto $(T,\rho)$.
If we postcompose this with the isometry $F^{-1}\:   (T,\rho)
\ra  (S,\tilde d)$, then we obtain the identity map
$\id\:  (S,d)\ra  (S,\tilde d)$ and see that it is a quasisymmetry. 
 Lemma~\ref{lem:qv_qs_inv}  implies that $\{\X^n\}$ is a quasi-visual approximation of 
 $(S, \tilde d)$. If we map $\{\X^n\}$
 to $T$ by the isometry $F\: (S, \tilde d)\ra (T,\rho)$,
 then it follows that $\{F(\X^n)\}$ is a quasi-visual approximation
 of  $(T, \rho)$.

 Conversely, suppose $\{F(\X^n)\}$ is a quasi-visual
 approximation of $(T, \rho)$. Applying the isometry
 $F^{-1}\: (T,\rho) \ra (S,\tilde d)$, we see that
 $\{\X^n\}=\{F^{-1}(F(\X^n))\}$ is a quasi-visual approximation
 of both metric spaces $(S, \tilde d)$ and $(S,d)$.  By
 Lemma~\ref{lem:qv_qs} the identity map
 $\id \: (S,d) \ra (S, \tilde d)$ is a quasisymmetry. If we
 compose this with the isometry $F\: (S, \tilde d)\ra (T,\rho)$,
 then we obtain the map $F\: (S, d)\ra (T,\rho)$ and it follows
 that this map is a quasisymmetry.
\end{proof}

\subsection*{Subdivisions}
\label{sec:subdivisions}
We now consider quasi-visual approximations of different metric spaces $S$ and $T$ that are ``combinatorially isomorphic'' in a suitable sense and hope to construct a quasisymmetry $F\: S\ra T$ that establishes the  correspondence between these approximations.  In order to obtain a positive result in this direction, we  have  to impose somewhat stronger assumptions on the  approximations. This is the motivation for the following concept.

\begin{definition}[Subdivisions]
  \label{def:subdiv}
  A \emph{subdivision} of a compact metric space $S$ is a sequence
  $\{\X^n\}_{n\in \N_0}$ with the following properties:
  \begin{enumerate}
  \item 
    \label{item:subdiv1}
  $\X^0=\{S\}$,  and    $\X^n$ is a finite collection of   
    compact subsets of $S$   for each $n\in \N$.
   \item 
    \label{item:subdiv2}
    For each  $n\in \N_0$ and $Y\in \X^{n+1}$, there exists $X\in
    \X^{n}$ with $Y\subset X$. 
  \item 
    \label{item:subdiv3}
    For each  $n\in \N_0$ and $X\in \X^{n}$, we have
    \begin{equation*}
      X= \bigcup\{Y\in \X^{n+1} : Y\subset X\}.
    \end{equation*}
  \end{enumerate}
\end{definition}

Let $\{\X^n\}$ be a subdivision of $S$. Since $\X^0=\{S\}$, it follows from induction based on 
 \ref{item:subdiv3} that
\begin{equation*}
  S= \bigcup_{X\in \X^n}X,
\end{equation*}
for each $n\in \N$.  In particular, each $\X^n$ is a finite cover of $S$ by some of its compact subsets. An element $X$ of  any of the collections $\X^n$, $n\in \N_0$, is called a \emph{tile}
of the subdivision,  and a {\em tile of level $n$} or an $n$-{\em tile} if $X\in \X^n$.

If  $k,n\in \N_0$ and $X\in \X^n$, then \ref{item:subdiv3} also implies that 
\begin{equation}
  \label{eq:subdivXnXnk}
  X = \bigcup\{Y\in \X^{n+k} : Y\subset X\}.
\end{equation}
So each $n$-tile $X$ is ``subdivided''
by tiles  of a higher level $n+k$. 

The following concept is of key importance  for this paper. 
\begin{definition}[Quasi-visual subdivisions]
  \label{def:qvsub}
A subdivision $\{\X^n\}$ of a compact metric space $S$ is called a
\emph{quasi-visual subdivision} of $S$ if it is a quasi-visual approximation
according to Definition~\ref{def:qv_approx}.   
\end{definition} 



A sequence $\{X^n\}_{n\in\N_0}$ of tiles in a subdivision $\{\X^n\}$ is called {\em descending} if $X^n\in \X^n$ 
for  $n\in \N_0$ and 
\begin{equation}
  \label{eq:desc_ntiles}
  X^0\supset X^1 \supset X^2\supset \dots \,.
\end{equation}
It easily follows from \eqref{eq:subdivXnXnk} that for each $x\in S$ there is a descending sequence
$\{X^n\}_{n\in \N_0}$ of tiles such that  $x\in \bigcap_n
X^n$. Note that this  sequence is not unique in general.

Let  $\{\X^n\}$ and $\{\Y^n\}$ be  subdivisions of compact 
  metric spaces $S$ and $T$, respectively. We say 
  $\{\X^n\}$ and $\{\Y^n\}$ are
\emph{isomorphic subdivisions} if there exist bijections 
 $F^n\colon\X^n\to \Y^n$,  $n\in \N_0$,  such that for all $n\in \N_0$, $X,Y\in
\X^n$, and $X'\in \X^{n+1}$ we have
\begin{align}
  \label{eq:isosubdiv_cap}
  X\cap Y \neq \emptyset
  \quad\text{if and only if}\quad
  F^n(X) \cap F^n(Y) \neq \emptyset
  \intertext{and}
  \label{eq:isosubdiv_incl}
    X'\subset X 
  \quad\text{if and only if}\quad
  F^{n+1}(X') \subset F^n(X).
\end{align}
We say that the
isomorphism between $\{\X^n\}$ and $\{\Y^n\}$ is
 \emph{given
  by} the family $\{F^n\}$. We say that an isomorphism $\{F^n\}$ is {\em induced} by a 
 homeomorphism $F\: S\ra T$ if $F^n(X)=F(X)$ for all $n\in \N_0$ and $X\in \X^n$. 

\begin{proposition}
  \label{prop:qv_f_qs}
  Let $S$ and $T$ be compact metric spaces with 
  quasi-visual subdivisions $\{\X^n\}$ and $\{\Y^n\}$, respectively. 
If  $\{\X^n\}$ and $\{\Y^n\}$ are isomorphic, then there exists  a 
unique quasisymmetric homeomorphism 
  $F\colon S\ra  T$ that induces the isomorphism
  between $\{\X^n\}$ and $\{\Y^n\}$.
\end{proposition}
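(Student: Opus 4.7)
The plan is to build $F$ pointwise from the isomorphism $\{F^n\}$ and then invoke Proposition~\ref{prop:qv-qs} to upgrade the resulting homeomorphism to a quasisymmetry. For each $x\in S$, I would pick a descending sequence of tiles $X^0\supset X^1\supset\cdots$ with $x\in\bigcap_n X^n$---such sequences exist by \eqref{eq:subdivXnXnk} as noted after Definition~\ref{def:qvsub}---and define $F(x)$ to be the unique point of $\bigcap_n F^n(X^n)$. The image sequence is descending by \eqref{eq:isosubdiv_incl}; Corollary~\ref{cor:qv_tiles_shrink} applied to $\{\Y^n\}$ forces $\diam(F^n(X^n))\to 0$; and compactness of $T$ makes the intersection a single point. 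Independence of the choice of sequence follows from \eqref{eq:isosubdiv_cap}: any two descending sequences through $x$ meet at every level, hence so do their images, and the vanishing diameters pin both limits to the same point.

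I would next show that $F$ realizes the isomorphism on tiles, that is, $F(X)=F^n(X)$ for every $X\in\X^n$, not merely containment. The inclusion $F(X)\subset F^n(X)$ follows by extending $X$ to a descending sequence through any prescribed $x\in X$ (upward via Definition~\ref{def:subdiv}\ref{item:subdiv2}, downward via \eqref{eq:subdivXnXnk}). For the reverse inclusion, given $y\in F^n(X)$ I would build a descending sequence $\{Y^k\}$ in $T$ through $y$ with $Y^n=F^n(X)$, pull it back via $X^k=(F^k)^{-1}(Y^k)$ to a descending sequence in $S$, and select $x$ in its nonempty compact intersection; by construction $x\in X$ and $F(x)=y$. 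Running the same argument with the inverse isomorphism $\{(F^n)^{-1}\}$ produces an inverse map to $F$, so $F$ is a bijection with $F(\X^n)=\Y^n$ for all $n$.

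Continuity comes from the quasi-visual geometry. If $x_k\to x$ in $S$ and $m_k=m(x_k,x)$, then Lemma~\ref{lem:qv_metric} gives $d(x_k,x)\asymp\diam(X^{m_k})$ for any $m_k$-tile $X^{m_k}$ containing $x$, forcing $m_k\to\infty$. The definition of $m_k$ supplies intersecting $m_k$-tiles containing $x$ and $x_k$ respectively; by the tile-preservation step and \eqref{eq:isosubdiv_cap} their images intersect in $T$, contain $F(x)$ and $F(x_k)$, and have diameters tending to $0$ by Corollary~\ref{cor:qv_tiles_shrink}. Hence $\rho(F(x_k),F(x))\to 0$, so $F$ is continuous; the analogous argument for $F^{-1}$ shows $F$ is a homeomorphism. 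Since $\{F(\X^n)\}=\{\Y^n\}$ is a quasi-visual approximation of $T$ by assumption, Proposition~\ref{prop:qv-qs} upgrades $F$ to a quasisymmetry. Uniqueness is then immediate: any homeomorphism inducing $\{F^n\}$ must send $x$ into $\bigcap_n F^n(X^n)=\{F(x)\}$ for every descending sequence through $x$.

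The step I expect to be most delicate is the tile equality $F(X)=F^n(X)$, because subdivisions do not force unique parents and one must invoke the isomorphism in both directions---pulling descending sequences back from $T$ to $S$ via $\{(F^k)^{-1}\}$---to produce the required preimage points inside $X$. Once this identification is secure, everything else reduces to routine bookkeeping together with a single appeal to Proposition~\ref{prop:qv-qs}.
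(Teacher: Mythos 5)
Your proposal is correct and follows essentially the same route as the paper: construct $F$ pointwise via descending tile sequences, verify $F(X)=F^n(X)$, and then invoke Proposition~\ref{prop:qv-qs} to conclude quasisymmetry. The only difference is that the paper delegates the existence and uniqueness of the inducing homeomorphism to \cite[Proposition~2.1]{BT} (sketching exactly the argument you carry out), whereas you supply those details directly.
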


\begin{proof} 
Since $\{\X^n\}$ and $\{\Y^n\}$ are quasi-visual subdivisions of $S$ and $T$, respectively, by  Corollary~\ref{cor:qv_tiles_shrink} we know that 
\begin{equation}\label{eq:shr}
\lim_{n\to \infty} \max_{X\in \X^n} \diam(X)=0 \text{ and } \lim_{n\to \infty} \max_{Y\in \Y^n} \diam(Y)=0.
\end{equation} 
Now if the isomorphism between $\{\X^n\}$ and $\{\Y^n\}$ is given by the sequence $\{F^n\}$ of bijections $F^n\: \X^n\ra \Y^n$, then
there exists a unique homeomorphism 
$F\:S\ra T$ such that $F(X)=F^n(X)$ for all $n\in \N_0$ and $X\in \X^n$. 
This follows from 
  \cite[Proposition~2.1]{BT}. The idea for the proof is straightforward: for each $x\in S$ one finds a descending sequence $\{X^n\}$ with $X^n\in \X^n$ for $n\in \N_0$ and $\{x\}= \bigcap_n X^n$. Then $\{F^n(X^n)\}$ is a descending sequence
  so that $\bigcap_n F^n(X^n)$ contains a single point $y\in T$. Here \eqref{eq:shr} is important, because it guarantees that the intersection of a descending sequence of 
tiles is a singleton set. 
  One now sets $F(x)=y$ and shows that $F$ is a well-defined homeomorphism that  induces the isomorphism given by $\{F^n\}$.  
   
We then have $\Y^n=F^n(\X^n)=F(\X^n)$ for $n\in \N_0$, and so 
$\{\Y^n\}=\{F(\X^n)\}$. Our hypotheses and       
 Proposition ~\ref{prop:qv-qs} now imply that $F$ is a quasisymmetry. 
\end{proof}

\section{Topological facts about trees}
\label{sec:topology-trees}

In this section we collect some general facts about metric trees that will be useful later on.   There is a rich literature on the underlying topological spaces, usually called dendrites (a metric space is a tree if and only if it  is a non-degenerate dendrite; see \cite[Proposition 2.2] {BM19}). We refer to \cite[Chapter V]{Wh}, \cite[Section \S 51 VI]{Ku68},  \cite[Chapter~X]{Na}, and the references in these sources for more on the subject. 

Let $T$ be a (metric) tree (as defined in the introduction), and  $x,y\in T$. Then $[x,y]$ denotes the unique arc in $T$ with
endpoints $x$ and $y$. If $x=y$, then this arc is degenerate and
$[x,y]=\{x\}$.  We also consider the half-open and open arcs
joining $x$ and $y$ in $T$ defined as
\begin{equation*}
  (x,y] \coloneqq [x,y]\setminus\{x\}, 
  \quad 
  [x,y) \coloneqq [x,y]\setminus\{y\},
  \quad
  (x,y) \coloneqq [x,y]\setminus\{x,y\}.  
\end{equation*}

The given metric on the tree $T$ (or $T$ itself)  is called {\em geodesic} if 
\begin{equation}\label {eq:geodmetr} 
 |x-y|= \length[x,y]
 \end{equation}
 for all $x,y\in T$. This is a strong assumption that is in general {\em not} true for the trees we consider. 
\subsection*{Subtrees}
\label{sec:subtrees}

A subset $X$ of a  tree $T$ is called a {\em subtree} of $T$ if $X$ equipped with the restriction of the metric on $T$ is also a tree.
 One can show that  $X\sub T$  is a subtree of $T$ if and
 only if $X$ contains at least two points and is  closed and
 connected (see \cite[Lemma~3.3]{BT}).  If $X$ is a subtree of $T$, then $[x,y]\sub X$ for all $x,y\in X$.

 The following statement is \cite[Lemma 2.3]{BM19}. 

\begin{lemma}
  \label{lem:top_T}
  Let $T$ be a tree and  $\V\subset T$ be a finite set. Then
  the following statements are true:

  \begin{enumerate}
    \item 
    \label{item:top_T1}
    Two points $x,y\in T\setminus \V$ lie in the same
    component of $T\setminus \V$ if and only if $[x,y]\cap
    \V=\emptyset$.  
    \item 
    \label{item:top_T2} If $U$ is a component of $T\setminus \V$, then $U$ is an open set and 
    $\overline{U}$ is a subtree of $T$ with  $\partial \overline U\sub \partial U\sub \V$. 
  \item
    \label{item:top_T3}
    If $U$ and $W$  are  two distinct components of $T\setminus
    \V$, then $\overline{U}$ and $\overline{W}$ 
    have at most one point in common. Such a common point belongs to $\V$, and is a boundary point of both  $\overline{U}$ and $\overline{W}$. 
  \end{enumerate}
\end{lemma}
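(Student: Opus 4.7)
The proof plan proceeds by exploiting two standard properties of (metric) trees: that trees are locally connected (so that components of open sets are open) and that a connected subset of a tree containing two points $x,y$ must contain the entire arc $[x,y]$.

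\textbf{Part \ref{item:top_T1}.} First I would handle the ``if'' direction: if $[x,y] \cap \V = \emptyset$, then $[x,y]$ is a connected subset of $T\setminus \V$ containing both $x$ and $y$, so $x$ and $y$ lie in a common component. For the ``only if'' direction, suppose $x,y$ lie in the same component $U$ of $T\setminus \V$. Since $U$ is connected and contains $\{x,y\}$, the dendrite property forces $[x,y] \subset U \subset T\setminus \V$, i.e., $[x,y]\cap \V = \emptyset$.

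\textbf{Part \ref{item:top_T2}.} The set $T\setminus \V$ is open (as $\V$ is finite, hence closed), and $T$ is locally connected; therefore every component $U$ of $T\setminus \V$ is open. The closure $\overline U$ is then compact and connected. To see that $\overline U$ contains at least two points (and hence is a subtree), I would note that $T$ has no isolated points (being connected with at least two points) and pick a point $p \in U$; local connectedness gives a connected neighborhood of $p$ contained in $T\setminus \V$, which therefore lies in $U$ and is non-degenerate. For the inclusion $\partial U \subset \V$: if $p \in \partial U$ and $p \notin \V$, then the component $U'$ of $T\setminus \V$ containing $p$ is an open neighborhood of $p$ meeting $U$, forcing $U' = U$ and hence $p \in U$, contradicting $p \in \partial U$. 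Finally, $\partial \overline U \subset \partial U$ follows from the general topological identity $\partial \overline U = \overline U \setminus \operatorname{int}(\overline U) \subset \overline U \setminus U = \partial U$, where we used $U \subset \operatorname{int}(\overline U)$ since $U$ is open.

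\textbf{Part \ref{item:top_T3}.} This is the step I expect to require the most care. Suppose $U \neq W$ are distinct components of $T\setminus \V$ and $p,q \in \overline U \cap \overline W$. Both $\overline U$ and $\overline W$ are subtrees by \ref{item:top_T2}, so each contains the arc $[p,q]$. For any $r \in [p,q]$, I claim $r \in \V$: if $r \in U$, then $U$ is an open neighborhood of $r$ meeting $W$ (since $r \in \overline W$), contradicting $U \cap W = \emptyset$; symmetrically $r \notin W$, and likewise $r$ cannot lie in any other component of $T\setminus \V$ without contradicting $r \in \overline U$. Hence $[p,q] \subset \V$. Since $\V$ is finite while an arc with two distinct endpoints is uncountable, we must have $p = q$. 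This shows $\overline U \cap \overline W$ has at most one point. If it has one point $p$, the same argument shows $p \in \V$; moreover $p \notin U$ (else $U$ would be a neighborhood of $p$ meeting $W$, contradiction) and similarly $p \notin W$, so $p \in \partial U \cap \partial W$.

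The lemma is attributed to \cite{BM19} in the excerpt, so the exposition can be kept brief; the only genuinely non-formal ingredient is the dendrite property ``connected sets contain their spanning arcs,'' which is what makes the pigeonhole step in \ref{item:top_T3} succeed via the finiteness of $\V$.
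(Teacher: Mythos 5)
The paper itself does not prove this lemma; it is quoted verbatim from \cite[Lemma~2.3]{BM19}, so there is no in-paper argument to compare against. Your proof is correct and follows the standard route: local connectedness makes components of the open set $T\setminus\V$ open, the dendrite property ``a connected set containing $x$ and $y$ contains $[x,y]$'' drives part~\ref{item:top_T1}, and the pigeonhole step (an arc is uncountable, $\V$ is finite) forces $\overline U\cap\overline W$ to be a single point in part~\ref{item:top_T3}. All of these ingredients are legitimately standard for dendrites.

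One small imprecision at the end of part~\ref{item:top_T3}: you conclude $p\in\partial U\cap\partial W$, whereas the statement asserts $p$ is a boundary point of the \emph{closures}, i.e.\ $p\in\partial\overline U\cap\partial\overline W$. Since $\partial\overline U\subset\partial U$ (as you note in part~\ref{item:top_T2}), the latter is not implied by the former. The fix is one line: $W$ is open and disjoint from $\overline U$ (any point of $W\cap\overline U$ would have $W$ as an open neighborhood meeting $U$), and $p\in\overline W$, so every neighborhood of $p$ meets $T\setminus\overline U$; together with $p\in\overline U$ this gives $p\in\partial\overline U$, and symmetrically for $\overline W$. With that addition the argument is complete.
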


Let $T$ be a tree and $p\in T$. If $U$ is a component of $T\setminus\{p\}$, then
$$B\coloneqq\overline U=U\cup \{p\}$$
is called a {\em branch} of $p$ (in $T$).
This is a subtree of
$T$ (see \cite[Lemma~3.2~(ii) and Lemma~3.4]{BT}). 
 In particular, only the point $p$ is added as we pass from $U$ to $\overline U$. 


The set 
$T\setminus \{p\}$ has at least one and at most
countably many distinct complementary components
 (see
\cite[Lemma~3.8 and the discussion after its proof]{BT}). We denote the number of these components by $\deg_T(p)\in \N\cup\{\infty\}$, where we set 
$\deg_T(p)=\infty$ if $T\setminus \{p\}$ has infinitely many components. Note  that $\deg_T(p)$ is equal to the number of branches of $p$ in $T$. 

If $\deg_T(p)=1$, then we call $p$ a \emph{leaf} of $T$; so $p$
is a leaf of $T$ precisely when $T\setminus\{p\}$ has one
component or, equivalently, if $T\setminus\{p\}$ is connected.
  
If $\deg_T(p)\ge 3$, or equivalently, if $T\setminus\{p\}$ has at
least three components, we call $p$ a \emph{branch point} of
$T$. If $\deg_T(p) =3$, or equivalently, if $T\setminus\{p\}$ has
exactly three components, $p$ is called a \emph{triple point} of
$T$.
Finally, we call $T$ \emph{trivalent}, if each branch point
of $T$ is a triple point, or equivalently, if $\deg_T(p)\leq 3$
for all $p\in T$. 


\begin{lemma}  \label{lem:subt}
Let $T$ be a tree, $S\sub T$ be a subtree of $T$, and $p\in
  S$. Then the following statements are true:
  \begin{enumerate} 
   \item 
    \label{item:subt0}  Every branch $B$ of $p$ in $S$ is contained in a unique branch $B'$ of $p$ in $T$. The  
 assignment $B\mapsto B'$ is an injective map between the  sets of branches of $p$  in $S$ and in $T$. If $p$ is an interior point of $S$, then this map is a bijection. 
 
 In particular, 
    $\deg_S(p)\le \deg_T(p)$ with equality if $p\in \inte(S)$.

  \item 
    \label{item:subt1} If $p$ is a leaf of $T$, then $p$ is a leaf of $S$. Conversely, 
if  $p$ is a leaf of $S$ with $p\not \in  \partial S$, then 
$p$ is a leaf of $T$. 

\item \label{item:subt2}    
 If $B\sub S$ is a branch of $p$ in $S$ with $B\cap \partial S=\emptyset$, then $B$ is a branch of $p$ in $T$.       \end{enumerate} 
\end{lemma}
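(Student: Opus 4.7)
The unifying idea is that since $S$ is a subtree of $T$, the arc $[x,y]$ in $T$ equals the arc $[x,y]$ in $S$ whenever $x,y\in S$. Combined with Lemma~\ref{lem:top_T}\ref{item:top_T1} applied to $\V=\{p\}$ in both $S$ and $T$, this will let me compare the component structures of $S\setminus\{p\}$ and $T\setminus\{p\}$ cleanly.

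For part~\ref{item:subt0}, every component $U$ of $S\setminus\{p\}$ is connected inside $T\setminus\{p\}$, hence lies in a unique component $U'$ of $T\setminus\{p\}$; setting $B'\coloneqq U'\cup\{p\}$ yields the inclusion $B=U\cup\{p\}\subset B'$. Injectivity of $U\mapsto U'$ is the essential point: if two distinct components $U_1,U_2$ of $S\setminus\{p\}$ landed in the same $U'$, I would pick $x\in U_1$, $y\in U_2$ and derive a contradiction, since the arc $[x,y]$ would lie in $U'$ and hence avoid $p$, whereas Lemma~\ref{lem:top_T}\ref{item:top_T1} applied in $S$ forces $p\in[x,y]$. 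For surjectivity when $p\in\inte(S)$, I use a neighborhood of $p$ contained in $S$: for any $q\in U'$, a sufficiently short initial segment of the arc $[p,q]$ produces a point $q'\in U'\cap(S\setminus\{p\})$, and the component of $S\setminus\{p\}$ containing $q'$ maps to $U'$. The degree statement is then immediate from the bijection.

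Part~\ref{item:subt1} will follow by counting branches. If $p$ is a leaf of $T$, so $\deg_T(p)=1$, then part~\ref{item:subt0} gives $\deg_S(p)\le 1$; since $S$ has at least two points, $S\setminus\{p\}$ is nonempty and $\deg_S(p)\ge 1$, forcing equality. Conversely, if $p$ is a leaf of $S$ with $p\in\inte(S)$, the bijective case of~\ref{item:subt0} yields $\deg_T(p)=\deg_S(p)=1$.

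The main obstacle is part~\ref{item:subt2}. Write $B=U\cup\{p\}$ with $B\cap\partial S=\emptyset$; in particular $p\in\inte(S)$, so~\ref{item:subt0} provides a unique component $U'$ of $T\setminus\{p\}$ containing $U$, and $B\subset B'=U'\cup\{p\}$. I need to upgrade this to $U=U'$. Suppose for contradiction that some $y\in U'\setminus U$ exists, and fix any $x\in U$; the arc $[y,x]$ lies in $U'$ and hence avoids $p$. If $y\in S$, then Lemma~\ref{lem:top_T}\ref{item:top_T1} applied in $S$ would place $y\in U$, a contradiction. If $y\notin S$, then closedness of $S$ together with $x\in S$ guarantees a first point $q\in\partial S\cap[y,x]$ as one travels from $y$ to $x$. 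Since $p\notin[y,x]$, we have $q\ne p$, so $q$ lies in some component $V$ of $S\setminus\{p\}$; because $q\in V\cap U'$ and $U\subset U'$, the injectivity established in~\ref{item:subt0} forces $V=U$. Hence $q\in U\cap\partial S\subset B\cap\partial S$, contradicting the hypothesis. This yields $B=B'$.
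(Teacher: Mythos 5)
Your proof is correct, but it diverges from the paper's in two places worth noting. For part~\ref{item:subt0} the paper simply cites \cite[Lemma~3.5]{BT}, whereas you prove it directly; your argument (connected sets land in unique components, injectivity via the fact that the arc $[x,y]$ is the same in $S$ and $T$ together with Lemma~\ref{lem:top_T}~\ref{item:top_T1}, surjectivity via a short initial segment of $[p,q]$ when $p\in\inte(S)$) is sound and makes the lemma self-contained. For part~\ref{item:subt2} the paper's proof is pure point-set topology: it shows that $U=B\setminus\{p\}$ is open in $T$ (relative openness in $S$ upgrades to openness in $T$ precisely because $U\cap\partial S=\emptyset$) and relatively closed in $T\setminus\{p\}$ (because $S$ is closed), so the connected set $U$ must be an entire component of $T\setminus\{p\}$. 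You instead argue combinatorially along arcs: you take $y\in U'\setminus U$, use Lemma~\ref{lem:top_T}~\ref{item:top_T1} in $S$ to rule out $y\in S$, and for $y\notin S$ locate a first entry point $q\in\partial S\cap[y,x]$, which the injectivity from part~\ref{item:subt0} forces into $U$, contradicting $B\cap\partial S=\emptyset$. Both routes are valid; the paper's is shorter and avoids invoking part~\ref{item:subt0}, while yours stays entirely within the arc-based toolkit of Lemma~\ref{lem:top_T} and makes the role of the hypothesis $B\cap\partial S=\emptyset$ visible as "no boundary point of $S$ can be reached from $U$ without crossing $p$".
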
 

\begin{proof}   \ref{item:subt0} This is \cite[Lemma 3.5]{BT}. 

 \smallskip
  \ref{item:subt1} Suppose $p$ is a leaf of $T$. Then by  \ref{item:subt0} we have $1\le \deg_S(p)\le \deg_T(p)=1$. Hence $\deg_S(p)=1$ and $p$ is a leaf of  $S$.

Conversely, suppose $p$ is a leaf of $S$ with $p\not \in \partial S$.
Then $p\in S \setminus \partial S=\inte(S)$, and so
$\deg_T(p)=\deg_S(p)=1$ by  \ref{item:subt0}. Hence $p$ is a leaf of $T$. 

%

  \smallskip 
  \ref{item:subt2}
  The set $U=B\setminus\{p\}$ is a connected subset of
  $S\setminus\{p\}\sub T\setminus\{p\}$. This set is relatively
  open in $S$, and hence an open subset of $T$, because
  $U\cap \partial S=\emptyset$.  
  
  The set
  $U=\overline U\cap S\setminus\{p\}$ is also relatively closed
  in $S\setminus\{p\}$.  Since $S$ is closed, every limit point
  of $U$ in $T\setminus\{p\}$ belongs to $S\setminus \{p\}$, and
  hence to $U$. This shows that $U$ is relatively closed
  in $T\setminus\{p\}$. Since $U$ is also open and connected, $U$
  must be a component of $T\setminus\{p\}$, i.e., a maximal
  connected subset of $T\setminus\{p\}$. Indeed, if
  $U\sub M\sub T\setminus\{p\}$ with $U\ne M$, then $M$ has $U$
  as a non-trivial open and relatively closed subset, and so $M$
  cannot be connected. It follows that $B=\overline U=U\cup\{p\}$ is a branch of 
  $p$ is $T$.
 \end{proof}

 \subsection*{Decompositions and subdivisions of trees}
 \label{sec:decomp-subd-trees}
 We now assume that $T$ is a tree such that each branch point $p$
 of $T$ has only finitely many branches (i.e.,  $\deg_T(p)< \infty$ 
 or, equivalently, $T\setminus \{p\}$
 has only  finitely many components). 
 
 Let $\V$ be
 a finite (possibly empty) set of points in $T$ that does not
 contain any leaf of $T$. We want to decompose $T$ into pieces by
 ``cutting'' $T$ at the points in $\V$.
 A {\em tile} $X$
 (in the decomposition induced by $\V$) is the closure of a
 component of $T\setminus \V$. It follows from
 Lemma~\ref{lem:top_T}~\ref{item:top_T2} that each tile is a
 subtree of $T$. The set of all tiles obtained in this way from
 $\V$ is denoted by $\X$ and called the \emph{decomposition
   induced by $\V$}. We will see momentarily that the set $\X$
 is always a  finite cover of $T$ (see
 Lemma~\ref{lem:vt}~\ref{item:vt8}).  
   
Most of the following statements  are intuitively clear,  but we will include full proofs for the sake of completeness.

%

\begin{lemma}
  \label{lem:vt} Let $T$ be a tree such that every branch point of $T$ has only finitely many branches, and let $\V\sub
  T$ be a finite (possibly empty)  subset of $T$ that contains no leaf of $T$.
  Let $\X$ denote the decomposition of $T$ induced by $\V$.  
  Then the  following statements are true: 
  \begin{enumerate}
  \item
    \label{item:vt1}
    If $X\in \X$ and $v\in \V$, then $X$ is contained in the
    closure of one of the components of $T\setminus \{v\}$ and
    disjoint from the other components of $T\setminus \{v\}$.
    
  \item 
    \label{item:vt2}
    Each tile $X\in \X$ is a subtree of $T$ with
    $ \partial X= \V\cap X$ and $\inte(X)=X\setminus \V$. The
    closure of $\inte(X)$ is equal to $X$.
      
  \item
    \label{item:vt3}
    If $\V\ne \emptyset$ and  $X\in \X$, then $\partial X\neq \emptyset$ and each point
    $v\in \partial X$ is a leaf of the subtree $X$.

  \item 
    \label{item:vt5}
    Two distinct tiles $X,Y\in \X$ have at most one point in
    common. Such a common point of $X$ and $Y$ belongs to $\V$
    and is a boundary point of both $X$ and $Y$.
    
  \item
    \label{item:vt6}
    If $v\in \V$ and $n=\deg_T(v)\in \N$, $n\ge 2$, is the number of
    branches of $v$ in $T$, then $v$ is contained in precisely
    $n$ distinct tiles in $\X$.  Each  branch of $v$ in $T$ contains precisely one of these tiles.  

  \item
    \label{item:vt7}
    If $X\in \X$ and $\partial X=\{v\}\sub \V$ is a singleton
    set, then $X$ is a branch of $v$ in $T$.
   
  \item
    \label{item:vt8}
    The set of tiles $\X$ is a finite cover of $T$.
    
     \item
    \label{item:vt4}
    Suppose, in addition, that $T$ is a trivalent tree with a
    dense set of branch points. Then each $X\in \X$ is also a
    trivalent tree with a dense set of branch points.

\end{enumerate}
\end{lemma}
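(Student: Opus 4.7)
My plan is to establish the eight statements in the given order, since later parts build on earlier ones. The essential inputs are Lemma~\ref{lem:top_T} on the topology of $T\setminus \V$, Lemma~\ref{lem:subt} relating branches in a subtree to branches in the ambient tree, and the uniqueness of arcs in a tree (which reduces connectedness questions to questions about arcs).

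For~\ref{item:vt1}, I would write $X=\overline U$ for a component $U$ of $T\setminus \V$ and fix $v\in \V$; then $U$ is connected and disjoint from $\{v\}$, so it lies in a single component $W$ of $T\setminus \{v\}$, whence $X\sub \overline W$, and Lemma~\ref{lem:top_T}~\ref{item:top_T3} shows that $X$ meets no other component of $T\setminus \{v\}$. Parts~\ref{item:vt2}, the ``$\partial X\neq \emptyset$'' half of~\ref{item:vt3}, and~\ref{item:vt5} then follow directly from Lemma~\ref{lem:top_T}~\ref{item:top_T2} and~\ref{item:top_T3}. For the ``leaf'' half of~\ref{item:vt3}, I would observe that~\ref{item:vt1} places $X$ inside some branch $\overline W$ of $v$ in $T$, and that $v$ is a leaf of $\overline W$ since $\overline W\setminus\{v\}=W$ is connected; given any two points $x,y\in X\setminus\{v\}$, both lie in $W$, and the unique arc $[x,y]$ is contained in the subtree $\overline W$ and hence avoids the leaf $v$, so $X\setminus\{v\}$ is arc-connected and $\deg_X(v)=1$.

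For~\ref{item:vt6}, let $\overline{W_1},\dots,\overline{W_n}$ denote the branches of $v$ in $T$. By~\ref{item:vt1}, every tile containing $v$ lies in exactly one $\overline{W_i}$, so it suffices to show each $\overline{W_i}$ contains precisely one tile with $v\in X$. Existence: pick $x\in W_i$; since $\V$ is finite, the arc $[v,x]$ has a first point of $\V\setminus\{v\}$ (or none), and the initial open subarc lies in a single component $U$ of $T\setminus \V$ whose closure is a tile in $\overline{W_i}$ containing $v$. Uniqueness is the main obstacle: two distinct components $U_1\neq U_2$ inside $W_i$ with $v$ in each closure would yield, after picking $x_j\in U_j$, a tripod $[v,x_1]\cup [v,x_2]\cup [x_1,x_2]$ in the subtree $\overline{W_i}$; since $v$ is a leaf of $\overline{W_i}$, the arc $[x_1,x_2]$ avoids $v$, which forces the common initial arc $[v,x_1]\cap [v,x_2]=[v,z]$ to be nondegenerate, and an initial open subarc of $[v,z]$ up to the first point of $\V\setminus\{v\}$ would then lie in both $U_1$ and $U_2$, a contradiction. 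Part~\ref{item:vt7} uses the same tree-theoretic ideas: $\partial X=\{v\}$ gives $X\setminus\{v\}=U$; if $U$ were a proper subset of its ambient component $W$ of $T\setminus\{v\}$, an arc from $W\setminus U$ to $U$ in $W$ would produce a point of $\V\cap X$ distinct from $v$, contradicting $\partial X=\{v\}$. For~\ref{item:vt8}, finiteness of $\X$ is a simple induction on $\#\V$ using $\deg_T(v)<\infty$, and the cover property follows from~\ref{item:vt6}.

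Finally, for~\ref{item:vt4}, Lemma~\ref{lem:subt}~\ref{item:subt0} gives $\deg_X(p)\le \deg_T(p)\le 3$ for all $p\in X$, so any branch point of $X$ is a triple point and $X$ is trivalent. For density of branch points in $X$, given a nonempty open set $V\sub X$, I would use~\ref{item:vt2} (namely that $X=\overline{\inte(X)}$) to find a point $y\in V\cap \inte(X)$, shrink the radius to obtain a ball around $y$ in $T$ lying in $V\cap \inte(X)$, and then invoke density of branch points in $T$ to find a branch point $p$ in this ball. Since $p\in \inte(X)$, Lemma~\ref{lem:subt}~\ref{item:subt0} gives $\deg_X(p)=\deg_T(p)=3$, so $p\in V$ is a branch point of $X$, completing the proof.
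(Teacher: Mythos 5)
Your proposal is correct and follows essentially the same route as the paper: the same reduction of each part to Lemma~\ref{lem:top_T} and Lemma~\ref{lem:subt}, the same use of \ref{item:vt1} to localize tiles inside branches, and the same density argument for \ref{item:vt4} via $X=\overline{\inte(X)}$. The only local deviations are harmless: for uniqueness in \ref{item:vt6} you use a tripod/center argument where the paper instead picks points close to $v$ so that the connecting arcs avoid $\V\setminus\{v\}$, and for finiteness in \ref{item:vt8} you propose an induction on $\#\V$ where the paper gets the cleaner bound $\#\X\le \#\V\cdot N$ by noting that every tile meets $\V$ and each $v\in\V$ lies in only $\deg_T(v)$ tiles.
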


Note that if  $\V=\emptyset$, then  $\X=\{T\}$; so $X=T$ is the only 
tile in $\X$ and $\partial X=\partial T=\emptyset$. In this case, the statements in 
the previous lemma are vacuously or trivially true.

\begin{proof}
\ref{item:vt1}  We have $X=\overline U$, where
  $U$ is a component of $T\setminus \V$. Moreover,  
  since $v\in \V$ is not a leaf of 
$T$, our hypotheses imply that the set $T\setminus \{v\}$
has $n$ distinct components $W_1, \dots, W_n$, where $n\in \N$, $n\ge 2$. Since $U$ is a connected subset of $T\setminus \V\sub T \setminus \{v\}$, it is contained in one of these components, say $U\sub W_1$. 
Then $X=\overline U\sub \mybar{W}_1=W_1\cup \{v\}$, and $X$ is disjoint from $W_2\cup \dots \cup W_n =T\setminus  
 \mybar{W}_1$.

\smallskip
\ref{item:vt2} If $X\in \X$ is a tile, then $X=\overline U$, where 
$U$ is a component of $T\setminus \V$. It then follows from 
Lemma~\ref{lem:top_T}~\ref{item:top_T2} that  $X=\overline U$ is a subtree of $T$ with $\partial X=\partial \overline U\sub \partial U\sub \V$. Hence $\partial X\sub \V\cap X$, because $X$ is a closed subset of $T$. 

Conversely, if $v\in \V\cap X$, then by what we have seen in \ref{item:vt1}, $v$ is in the closure $\overline W=W \cup\{v\}$ of at least one component $W$ of $T\setminus\{v\}$ disjoint from 
$X$. This implies that $v\in \partial X$, and so $\partial X=\V\cap X$. The  last identity also shows  that  $\inte(X)=X\setminus \partial X=X\setminus \V$.  

Note that $U$ is disjoint from $\V$, and $\partial U\sub\V$ by 
Lemma~\ref{lem:top_T}~\ref{item:top_T2}. We also have $X=\overline U=U\cup \partial U$. This implies that 
 $$\inte(X)=X\setminus \V=(U \cup \partial U)\setminus \V=U. $$
 Since $\overline U=X$ and $\inte(X)=U$, the closure of $\inte(X)$ is equal to  $X$.  



\smallskip \ref{item:vt3}
If    $\V\neq \emptyset$, then  it  follows from \ref{item:vt1}
that $X\ne T$. Since $T$ is connected, this implies that $\partial X\ne \emptyset$; otherwise,  the non-empty set $X\ne T$ would be an open and closed   subset of the connected space $T$. This is impossible. 

We have $X=\overline U$, where $U$ is a component of 
$T\setminus \V$. If $v\in \partial X\sub \V$, then 
$X\setminus \{v\}$ is connected, because $U\sub X\setminus \{v\}
\sub \overline U=X$ (every set squeezed between a  connected set and its closure is connected). Hence $v$ is a leaf of $X$.

\smallskip \ref{item:vt5} This immediately follows from 
Lemma~\ref{lem:top_T}~\ref{item:top_T3}.

\smallskip \ref{item:vt6} For   $n=\deg_T(v)$ let   $W_1, \dots, W_n$ denote the  distinct 
components of $T\setminus \{v\}$. Here $n\in \N$, $n\ge 2$. 
 We have 
$\mybar{W}_k=W_k\cup\{v\}$  and so 
$v\in \mybar{W}_k$  for 
$k=1, \dots, n$.

For  each  $k\in \{1, \dots, n\}$ we choose a point $x_k\in W_k\sub T\setminus \{v\}$. Then 
by  Lemma~\ref{lem:top_T}~\ref{item:top_T1}
we  have
\begin{equation}\label{eq:vonarc}
 v\in [x_k,x_\ell] \text{ for all $k,\ell \in \{1, \dots, n\}$ with $k\ne \ell$.} 
\end{equation}
Since $v\in \mybar {W}_k\cap \mybar{W}_\ell$, we may  assume that  $x_k$ and $x_\ell$ are so close to $v$ that $[x_k,x_\ell]$ contains no other point  in the finite set $\V$. Then 
$[x_k,v)$ is a non-empty connected subset of $T\setminus \V$ for each $k\in \{1, \dots, n\}$, and so it must be contained in a unique component $U_k$ of $T\setminus \V$. These components 
$U_1, \dots, U_n$ of $T\setminus \V$ are pairwise disjoint open sets as follows from \eqref{eq:vonarc}
and  Lemma~\ref{lem:top_T}~\ref{item:top_T1} and ~\ref{item:top_T2}.  
Then   $X_k\coloneqq \mybar{U}_k$ for $k=1, \dots, n$ is a tile in $\X$  that contains the arc $[x_k,v]$, and so $v\in X_k$. Moreover,
the tiles $X_1, \dots, X_n$ are all distinct, because each tile $X_k$ contains 
the non-empty open set $U_k$ that is disjoint from all the other tiles in this list. So $v$ is contained in at least $n$ distinct tiles in $\X$.

 Suppose $Z=\overline U\in \X$ is any tile with $v\in Z$, where $U$ is a component of $T\setminus \V$. We choose a point $z\in U$.
 Then $z$   
 must be contained in one of the components $W_1, \dots, W_n$  of   
 $T\setminus \{v\}$, say $z\in W_1$. 
 Then $[x_1,z]\sub T\setminus \{v\}$ by Lemma~\ref{lem:top_T}~\ref{item:top_T1}. We may assume that $x_1$ and $z$ are so close to $v$ that  $[x_1,z]$ contains no point in $\V\setminus\{v\}$. Then $[x_1,z]\cap \V=\emptyset$, and so $x_1$ and $z$ are contained in the same component of  
 $T\setminus \V$. It follows that $U=U_1$, and so 
 $Z=\overline {U}=\mybar{U}_1=X_1$. This shows that 
 $X_1, \dots, X_n$  are the only tiles in $\X$ that contain $v$. So $v$ is contained in  $n$ distinct tiles $X_1, \dots, X_n\in \X$, and in no other tiles in $\X$. 
 
Let  $k\in \{1, \dots, n\}$. Then   the set  $U_k$ is a connected subset of $T\setminus \V\sub T\setminus \{v\}$, and so it is contained in a unique component of $T\setminus \{v\}$. This component must be  $W_k$, because $x_k\in U_k\cap W_k$. Hence 
 $X_k=\mybar{U}_k\sub \mybar{W}_k$, which shows that $X_k$ is contained in the branch $\mybar{W}_k$ of $v$ in $T$.  The tile $X_k$ cannot be contained in any other branch $\mybar{W}_\ell=W_\ell\cup\{v\}$ with $\ell\ne k$, because $X_k$ is disjoint from $W_\ell$ as follows from  \ref{item:vt1}.

 \smallskip \ref{item:vt7}
Suppose that $\partial X= \{v\}\sub \V$. We have $X=\overline U$,
where $U$ is a component of $T\setminus \V$. As we have seen in the proof of \ref{item:vt1}, there  is a component $W$ of
$\qT\setminus\{v\}$ with $U\subset W$.

\smallskip
{\em Claim:}  $U=W$. 
 To see this, we argue by contradiction and assume that $U\ne W$.
 Then there exists a point $x\in U\sub W$, as well as a point 
 $y\in W\setminus U$. Then $[x,y]\cap 
 \V\ne\emptyset$, because otherwise $y\in U$. 
 So as we travel from $x$ to $y$ along $[x,y]$, there must be a first point $u\in [x,y]$ that belongs to $\V$. Then $[x,u)\sub U$, and so $[x,u]\sub \overline U=X$. 
 
 By  \ref{item:vt6} the point $u\in \V$ belongs to  at least one tile in $\X$ distinct from $X$, and so  $u\in X$ is a boundary point of $X$ as follows from \ref{item:vt5}.
 Hence $u\in \partial X=\{v\}$  and so $u=v$. 
 Since $v=u\in [x,y]$,    
  Lemma~\ref{lem:top_T}~\ref{item:top_T1} implies that $x$ and $y$ lie in different components of $T\setminus \{v\}$. Since $x$ and $y$ lie in the same component $W$ of $T\setminus \{v\}$, this is a contradiction. The Claim follows.
  
  \smallskip 
 Since  $U=W$ by the Claim, we conclude  that 
  $X=\overline U=\overline W$ is the closure of the component $W$ of $T\setminus \{v\}$, and so a branch of $v$ in $T$.

 \smallskip \ref{item:vt8} If $\V=\emptyset$, this is obvious, because then we have $\X=\{T\}$.  
 
 Now suppose  $\V\ne \emptyset$. Then  each tile in $\X$  contains some point in $\V$ as follows from \ref{item:vt2} and  \ref{item:vt3}, and each point 
 in $\V$  is contained in at most $N$ tiles  by \ref{item:vt6}, where 
for  $N\in \N$ we choose some upper bound for the number of branches for the finitely many points in $\V$. This implies that the number of tiles in $\X$ is bounded above by 
 $ \#\V \cdot N$, and so $\X$ is a finite set. 
 
 Each point in $T\setminus \V$ lies in a tile in $\X$ by
 definition of tiles; this is also true for the points in $\V$ by
 \ref{item:vt6}. Hence $\X$ is a finite cover of $T$.
 
 \smallskip 
  \ref{item:vt4} Under the given additional assumptions on $T$, 
  let $B$ denote the set of branch points of $T$ and $B_X\sub X$ denote the set of branch points of its subtree $X$.  
  If $p\in\partial X=\V\cap X$, then $p$ is a leaf  of $X$ by  \ref{item:vt3}. 
  
  If $p\in 
  \inte(X)=X\setminus \partial X$, then the number of branches of $p$ in $X$ is the same as the number of branches of $p$ in $T$  
  (see Lemma~\ref{lem:subt}~\ref{item:subt0}).
 This shows that   $p\in X$ is a branch point of $X$ if and only if
 $p\in \inte(X)$ and $p$  is a branch point of  $T$; in this case, $p$ is a triple point of $X$. In particular, $X$ is a trivalent tree and 
 $B_X=B\cap \inte(X)$. 
 
 Since $B$ is dense in $T$, the set  
 $B_X=B\cap \inte(X)$ is dense in  $\inte(X)$, and hence dense in 
 $X$, because the closure of $\inte(X)$ is equal to $X$ by 
  \ref{item:vt2}.
 It follows that $X$ is a trivalent tree with a dense set $B_X$ of branch points.     
   \end{proof} 

We will now record some relations of two tile  decompositions of a tree $T$ induced by a set $\V$ and a larger set $\V'$. 

\begin{lemma}
  \label{lem:vtt}
  Let $T$ be a tree such that every branch point of $T$ has only
  finitely many branches.  Let $ \V\sub \V' \sub T$
  be finite (possibly empty) subsets of $T$ that contain no leaf of $T$.
  Let $\X$ and $\X'$ denote the decompositions of $T$ induced by
  $\V$ and $\V'$, respectively.  Then the following statements
  are true:
  \begin{enumerate}
  \item 
    \label{item:vtt1}
    Each tile $X'\in \X'$ is contained in a unique tile
    $X\in \X$.
     
  \item
    \label{item:vtt2}
    Each tile $X\in \X$ is equal to the union of all tiles
    $X'\in \X'$ with $X'\sub X$.
  \item
    \label{item:vtt3}
    Let $X\in \X$, $\V_X\coloneqq \V'\cap \inte(X)$, and   
    $\X_X$ be the decomposition of $X$ induced by $\V_X$. Then 
    $\X_X=\{ X'\in \X': X'\sub X\}$.
    
  \item
    \label{item:vtt4}
    Let $X'\in \X'$, and $X\in \X$ be the unique tile with
    $X'\sub X$.  If $\partial_X X'$ denotes the relative boundary
    of $X'$ in $X$, then $\partial_X X'=\partial X'\setminus \V$.
\end{enumerate}
\end{lemma}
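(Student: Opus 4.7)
The plan is to prove the four assertions in the stated order; (i), (ii), and (iv) are essentially bookkeeping, while (iii) is the technical core and the main obstacle.

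For (i), write $X'=\overline{U'}$ for a component $U'$ of $T\setminus \V'$. Since $\V\subset \V'$, the set $U'$ is a connected subset of $T\setminus \V$, hence lies in a unique component $U$ of $T\setminus \V$, so $X'\subset \overline U=:X\in \X$. Uniqueness of $X$ follows from Lemma~\ref{lem:vt}~\ref{item:vt5}, since the subtree $X'$ contains more than one point. For (ii), the inclusion $\supset$ is immediate from (i). For the converse, let $x\in X$; by Lemma~\ref{lem:vt}~\ref{item:vt8} there is $X'\in \X'$ with $x\in X'$, and by (i) a unique $Y\in \X$ with $X'\subset Y$. If $Y=X$ we are done; otherwise Lemma~\ref{lem:vt}~\ref{item:vt5} forces $x\in \V\cap X=\partial X$, and one uses Lemma~\ref{lem:vt}~\ref{item:vt6} (applied to both $\X$ and $\X'$) to locate the unique tile of $\X'$ containing $x$ and lying in the same branch of $x$ in $T$ as $X$; this tile is contained in $X$ by branch-matching through Lemma~\ref{lem:vt}~\ref{item:vt6}.

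The proof of (iii) rests on the topological observation that \emph{if $x,y\in \inte(X)$ then $[x,y]\subset \inte(X)$}: indeed, any $v\in [x,y]\cap \partial X$ would be interior to $[x,y]$ and would separate $x$ and $y$ into distinct branches of $v$ in $T$, contradicting Lemma~\ref{lem:vt}~\ref{item:vt1}, which places $X$ in the closure of only one such branch. This observation implies that for $x,y\in \inte(X)\setminus \V'$ the arc $[x,y]$ meets $\V'$ exactly where it meets $\V_X$, so by Lemma~\ref{lem:top_T}~\ref{item:top_T1} applied in both $T$ and $X$, the points $x,y$ lie in the same component of $T\setminus \V'$ if and only if they lie in the same component of $X\setminus \V_X$. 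Thus, for each $X'\in \X'$ with $X'\subset X$, writing $U'=\inte(X')\subset \inte(X)$, the set $U'$ is contained in a unique component $V$ of $X\setminus \V_X$ with $V\cap \inte(X)=U'$. One then shows $\overline V=\overline{U'}(=X')$: the difference $V\setminus U'$ lies in the finite set $\partial X$, and for each $v\in V\cap \partial X$ and any $z\in U'$, the arc $[z,v]$ lies in $V$ (a connected subset of a tree contains every arc between its points), and by the same key observation combined with Lemma~\ref{lem:vt}~\ref{item:vt1} the subarc $[z,v)$ cannot meet another point of $\partial X$; hence $[z,v)\subset U'$ and $v\in \overline{U'}$. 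The reverse inclusion $\X_X\supset \{X'\in \X':X'\subset X\}$ follows from the same bijection between components of $T\setminus \V'$ inside $\inte(X)$ and components of $X\setminus \V_X$.

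Finally, (iv) is immediate from (iii): $X'$ is a tile of the decomposition $\X_X$ of $X$ induced by $\V_X$, so Lemma~\ref{lem:vt}~\ref{item:vt2} applied in $X$ and in $T$ gives $\partial_X X'=\V_X\cap X'$ and $\partial X'=\V'\cap X'$. Since $\V_X=\V'\cap \inte(X)=(\V'\cap X)\setminus \V$ and $X'\subset X$, we conclude $\partial_X X'=(\V'\cap X')\setminus \V=\partial X'\setminus \V$. The main obstacle is the verification in (iii) that $\overline V=\overline{U'}$: one must bound $V\setminus U'$ inside $\partial X$ and check that each such extra boundary point is already a limit point of $U'$, which is exactly what the arc-stays-in-the-interior observation is designed to achieve.
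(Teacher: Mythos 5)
Your proof is correct, and parts \ref{item:vtt1} and \ref{item:vtt4} coincide with the paper's argument, but you handle \ref{item:vtt2} and \ref{item:vtt3} by a genuinely different (and in places cleaner) route. For \ref{item:vtt2} the paper avoids any case analysis at boundary points: it observes that $U\setminus \V'$ is dense in $X=\overline U$, that every point of $U\setminus\V'$ lies in a tile $X'\in\X'$ with $X'\sub X$, and that the union of the finitely many such tiles is closed; your alternative of chasing an arbitrary $x\in\partial X$ through the branch-counting statement Lemma~\ref{lem:vt}~\ref{item:vt6} for both $\X$ and $\X'$ works, but the ``branch-matching'' step deserves one more line (the tile $Z\in\X$ containing $Z'$ meets the branch $B$ in infinitely many points, hence lies in $B$, hence equals $X$ by uniqueness in \ref{item:vt6}). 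For \ref{item:vtt3} the paper proves the containment $W\sub\overline U$ by contradiction, picking a putative point of $W\setminus\overline U$ and analyzing the first $\V'$-point on an arc; you instead isolate the convexity statement $x,y\in\inte(X)\Rightarrow[x,y]\sub\inte(X)$ and deduce a bijection between components of $T\setminus\V'$ inside $\inte(X)$ and components of $X\setminus\V_X$. This is a more structural argument and buys you both inclusions at once, whereas the paper needs a separate counting argument for $\X_X\sub\{X'\in\X':X'\sub X\}$. Two small repairs: (a) in the last sentence of your treatment of \ref{item:vtt3} you wrote the inclusion $\X_X\supset\{X'\in\X':X'\sub X\}$, which is the one you had just proved; the remaining direction is $\X_X\sub\{X'\in\X':X'\sub X\}$, and for it you should note explicitly that every component $V$ of $X\setminus\V_X$ contains a point of $\inte(X)\setminus\V'$ (it is an infinite open set and $\partial X\cup\V'$ is finite), so that $V$ is hit by your bijection; (b) before invoking Lemma~\ref{lem:vt}~\ref{item:vt2} \emph{inside} $X$ in part \ref{item:vtt4}, you must check that $\V_X$ contains no leaf of $X$ (if $v\in\V_X\sub\inte(X)$ were a leaf of $X$, it would be a leaf of $T$ by Lemma~\ref{lem:subt}~\ref{item:subt1}, contradicting the hypothesis on $\V'$), as the paper does.
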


 \begin{proof}
 \smallskip \ref{item:vtt1} If $X'\in \X'$ , then there exists 
a component $W$ of $T\setminus \V'$ with $X'=\overline W$. Since $\V\sub \V'$, the set $W$ is a connected subset of $T\setminus \V\supset T\setminus \V'$ and so contained in a unique component $U$ of $T\setminus \V$. Then 
$X'$ is contained in the tile $X\coloneqq \overline U\in \X$, because 
$ X'=\overline{W} \sub \overline U=X.$  
There can be no other tile in $\X$ containing $X'$, because by 
Lemma~\ref{lem:vt}~\ref{item:vt2} the set $X'$ is a subtree of $T$ and hence an infinite set, but  distinct tiles in $\X$ can have at most one point in common by Lemma~\ref{lem:vt}~\ref{item:vt5}.

\smallskip \ref{item:vtt2} If $X\in \X$, then $X=\overline U$, where $U$ is a component of $T\setminus \V$. Since $U$ is connected, this set cannot contain isolated points. This implies that 
the set $U\setminus \V'\sub X$ is dense in $U$ and hence also dense in $X=\overline U$. 

If $x\in U\setminus \V'$ is arbitrary, then there exists a component $W$ of 
$T\setminus \V'$ with $x\in W$. Since  $W$ is a connected subset of $T\setminus \V'\sub T\setminus \V$, 
this set must be contained in a component of $T\setminus \V$. Since $x\in U\cap W$, it follows that $W\sub U$. 
Then $X'\coloneqq \overline W$ is a tile in $\X'$ with $x\in X'$ and $X'= \overline W\sub \overline U=X$. 

This shows that if we denote by $Y$ the union of all tiles $X'\in \X'$ with  $X'\sub X$, then $Y\sub X$  contains the set $U\setminus \V'$. By  Lemma~\ref{lem:vt}~\ref{item:vt8} applied to $\V'$ there are only finitely many tiles in $\X'$, and so $Y$ is closed. Since 
$U\setminus \V'$ is dense in $X$ and $U\setminus \V'
\sub Y$, it follows that $X=Y$ as desired.

\smallskip \ref{item:vtt3}
We first verify the hypotheses of Lemma~\ref{lem:vt} for the tree
$X$ and its finite subset $\V_X$. First note that $X$ is a subtree of $T$, and
hence a tree. If $p\in X$ is arbitrary, then the number of
branches of $p$ in $X$ is bounded by the number of branches of
$p$ in $T$ (see Lemma~\ref{lem:subt}~\ref{item:subt0}), and hence
finite.

If $v\in \V_X\sub 
\inte(X)=X\setminus \partial X$, then $v$ is not a leaf of $X$;  
otherwise, $v\in  \V_X$  is a leaf of $T$ by
 Lemma~\ref{lem:subt}~\ref{item:subt1}, but $ \V_X \sub \V'$ contains no leaves of $T$ by our hypotheses. This shows that
  $\V_X\sub X$ contains no leaf of $X$;  so  we can apply all the 
 statements from Lemma~\ref{lem:vt}  for the decomposition $\X_X$ of the tree 
$X$  induced by $\V_X$.

We first show that  
$\{X'\in \X':X'\sub X\}\sub \X_X$. To this end, let $X'\in \X'$ with 
$X'\sub X$ be arbitrary. Then there exists a component $U$ of $T\setminus \V'$ with  $X'=\overline U$. Since 
$U$ is a connected subset of $X\setminus \V'\sub X\setminus \V_X$, there exists a component $W$ of $X\setminus \V_X$ 
with $U\sub W\sub X$. 

\smallskip 
{\em Claim:} $ W\sub \overline U$. In order to see this, we argue by contradiction and assume that there exists a point $x\in 
W\setminus \overline U$. Since $W$ is a relatively open subset 
of the tree $X$ (as follows from 
Lemma~\ref{lem:top_T}~\ref{item:top_T2}), we may assume (by moving $x$ slightly if necessary) that $x$ does not belong to the finite set $\V'$. 

We choose $y\in U$, and consider the arc $[x,y]\sub X$.
Since $x\in W\setminus (U\cup \V')  $ and $y\in U$ lie in different 
components of $T\setminus \V'$, there exists a point $v\in [x,y]\cap
\V'\sub X$ by Lemma~\ref{lem:top_T}~\ref{item:top_T1}. Here $v\ne x,y$. We cannot have $v \in \inte(X)$, because then  
$v\in\V'\cap \inte (X) = \V_X$; 
so $x$ and $y$ would lie in different components of $X\setminus   \V_X$, but these points lie in the same component $W$ of 
$X\setminus   \V_X$. It follows that  $v\in \partial X$.  
 By Lemma~\ref{lem:vt}~\ref{item:vt3} this  implies that 
$v$ is a leaf of  $X$.  This leads to a contradiction, because we have $v\in [x,y]$ and so $x$ and $y$ lie in different components of $X\setminus \{v\}$ by Lemma~\ref{lem:top_T}~\ref{item:top_T1}, but $X\setminus \{v\}$ is connected, because $v$ is a leaf of $X$. The Claim follows.

\smallskip
By the Claim, we have $U\sub W\sub \overline U$, and so 
$X'=\overline U=\overline W$. This shows that 
$X'$ is the closure of the component $W$ of $X\setminus \V_X$,
and so $X'$ belongs to the set of tiles $\X_X$ of the
decomposition of $X$ induced by $\V_X$. We have proved $\{X'\in
\X':X'\sub X\}\sub \X_X$. 

To show the other inclusion $ \X_X\sub  \{X'\in
\X':X'\sub X\}$,  first note that  $\X_X$ is a finite cover of $X$ by 
 Lemma~\ref{lem:vt}~\ref{item:vt8}. Moreover,  each   tile in $\X_X$ has non-empty   interior relative to $X$ (as follows  from the definition of tiles in combination with 
 Lemma~\ref{lem:top_T}~\ref{item:top_T2}) and this interior of a tile 
  is disjoint from all the other tiles in $\X_X$ (as follows from Lemma~\ref{lem:vt}~\ref{item:vt5}).

 Now let $Y\in \X_X$ be arbitrary. Then we  can choose a point $x$ in the interior of $Y$ relative to $X$. By \ref{item:vtt2} there 
exists $X'\in \X'$ with $X'\sub X$ and $x\in X'$. By what we have seen, $X'\in \X_X$. Since $x$ cannot belong to  any tile in 
   $\X_X$ except $Y$, we conclude that $Y=X'$. This 
   shows the other inclusion  $\X_X\sub \{X'\in \X: X'\sub X\}$. It follows  that  these two sets are equal, as desired.

   \smallskip 
     \ref{item:vtt4}
     Let $X'\in \X'$ be arbitrary. Then  by   \ref{item:vtt1} there exists  a unique tile $X\in \X$ with
     $X'\sub X$. To identify the relative boundary $\partial_XX'$,  we can apply the results from 
   Lemma~\ref{lem:vt}   to  $X$ and its decomposition $\X_X$ induced by $\V_X= \V' \cap \inte(X)$, because the relevant hypotheses  of Lemma~\ref{lem:vt}  are true as was pointed out in the proof    of \ref{item:vtt3}. In particular, by repeated application of   Lemma~\ref{lem:vt}~\ref{item:vt2} we see that  
   \begin{align*} \partial_X X'&=X'\cap \V_X =X'\cap \V'\cap \inte(X)\\ 
   & =\partial X'\cap \inte(X)=\partial X'\cap (X\setminus\V)=
   \partial X'\setminus \V.
  \end{align*}  
  In the last equality, we used that $\partial X'\sub X'\sub X$. The statement follows.  
\end{proof}

Let $T$ be a tree such that every point in $T$ has only finitely many branches (as in Lemmas~\ref{lem:vt}~and~\ref{lem:vtt}).  Let $\{\V^n\}_{n\in \N_0}$ be an increasing sequence of finite
 subsets  $T$, meaning  that 
 \begin{equation}
   \label{eq:Vn_incrs}
   \V^0\subset \V^1 \subset \V^2 \subset \dots\,.
 \end{equation}
In addition,  we require that $\V^0=\emptyset$, and that no   set $\V^n$ for $n\in \N$ contains a leaf of $T$. We denote  by $\X^n$
 the  decomposition of $T$
 induced by $\V^n$. Then it immediately follows from 
 Lemma~\ref{lem:vt}~\ref{item:vt8} and 
 Lemma~\ref{lem:vtt}~\ref{item:vtt1} and \ref{item:vtt2} that the sequence
 $\{\X^n\}_{n\in \N_0}$ is a subdivision of the tree $T$
 according to Definition~\ref{def:subdiv}.  We call $\{\X^n\}$
 the \emph{subdivision of} $T$ \emph{induced by} $\{\V^n\}$.
 Subdivisions of $T$ obtained in this way are one of the main ingredients in the proof of  the ``if'' implication in 
 Theorem~\ref{thm:CSST_qs}. We will discuss this in detail  in Sections~\ref{sec:subdivisions-trees}
 and~\ref{sec:decomp}.


\subsection*{Height and center}
\label{sec:height-center}

Let $T$ be a tree and $p\in T$ be a branch point of $T$. Then
there are points $x,y,z\in T$ that lie in distinct branches of
$p$. Conversely, we often want to find $p$ when $x,y,z\in T$ are
given. In addition, when $x,y,z$ are branch points, we want to
relate their heights $H_T(x), H_T(y),H_T(z)$ to the height
$H_T(p)$ of $p$. We remind the reader that height was defined in \eqref{eq:HT}.

\begin{lemma}
  \label{lem:center}
  Let $T$ be a tree, and $x,y,z\in T$. 
  
  \begin{enumerate} 
  \item 
    \label{item:center1} 
    Then
    $[x,y]\cap[y,z] \cap [z,x]$ is a singleton set.

  \item \label{item:center2} If $x,y,z$ are branch points of $T$,
    and $\{c\}=[x,y]\cap[y,z] \cap [z,x]$, then $c$ is a branch
    point of $T$ with
    \begin{equation*}
       H_T(c)\ge \min \{ H_T(x), H_T(y), H_T(z)\}.
    \end{equation*}
\end{enumerate}
\end{lemma}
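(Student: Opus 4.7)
The plan is to treat the two parts in turn.

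For part~\ref{item:center1}, I would first handle the degenerate cases where two of the three points coincide, in which case the intersection collapses to the singleton containing the common point. For distinct $x,y,z$, the proof rests on the following tree fact: since $[x,y]$ and $[x,z]$ are arcs in $T$ emanating from the common point $x$, their intersection is arcwise connected (any two points $p,q\in[x,y]\cap[x,z]$ satisfy $[p,q]\subset[x,y]\cap[x,z]$ by the subtree property), closed, and contained in the arc $[x,y]$, hence equals a sub-arc $[x,c]$. I would then observe that the portion $[y,c]$ of $[x,y]$ and the portion $[c,z]$ of $[x,z]$ meet only at $c$ (by maximality of $c$), so their union is an arc from $y$ to $z$, which by uniqueness of arcs in $T$ equals $[y,z]$. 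Hence $c\in[y,z]$ and $c$ lies in the triple intersection. For uniqueness, any point $c'$ in the triple intersection must lie in $[x,y]\cap[x,z]=[x,c]$ as well as in $[y,z]=[y,c]\cup[c,z]$; but $[x,c]\cap[y,c]=\{c\}=[x,c]\cap[c,z]$ since these are oppositely oriented sub-arcs of $[x,y]$ and $[x,z]$ respectively, forcing $c'=c$.

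For part~\ref{item:center2}, I first dispose of the degenerate case $c\in\{x,y,z\}$, in which $H_T(c)$ equals one of $H_T(x), H_T(y), H_T(z)$ and the inequality is immediate. So assume $c\notin\{x,y,z\}$, so that $c$ is an interior point of each arc $[x,y]$, $[y,z]$, $[z,x]$. Then Lemma~\ref{lem:top_T}~\ref{item:top_T1}, applied with $\V=\{c\}$, implies that $x,y,z$ lie in three pairwise distinct components $V_x,V_y,V_z$ of $T\setminus\{c\}$; their closures $B_x,B_y,B_z$ are three distinct branches of $c$, so $c$ is a branch point of $T$.

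The heart of the argument is to show that $\diam(B_x)\ge H_T(x)$, and analogously $\diam(B_y)\ge H_T(y)$ and $\diam(B_z)\ge H_T(z)$. Since $x$ is a branch point of $T$, it has at least three branches in $T$, and each of its three largest branches has diameter at least $H_T(x)$. At most one branch of $x$ can contain $c$ (as $c\ne x$, the point $c$ lies in exactly one component of $T\setminus\{x\}$), so at least two of these large branches avoid $c$. For any such branch $B'$ of $x$, the set $B'$ is connected, contains $x$, and is disjoint from $\{c\}$; hence $B'\subset T\setminus\{c\}$ is connected and therefore contained in the component $V_x$ of $T\setminus\{c\}$ that contains $x$. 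Thus $B'\subset B_x$, giving $\diam(B_x)\ge\diam(B')\ge H_T(x)$. Consequently $c$ has three distinct branches $B_x,B_y,B_z$ each of diameter at least $h\coloneqq\min\{H_T(x),H_T(y),H_T(z)\}$, so the third largest branch of $c$ also has diameter at least $h$, and $H_T(c)\ge h$ by the definition~\eqref{eq:HT}.

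The main obstacle is conceptually minor but requires care: verifying that a branch of $x$ avoiding $c$ is contained inside $B_x$. This is a connectedness argument in the spirit of Lemma~\ref{lem:subt}~\ref{item:subt2}, and once set up correctly the rest of \ref{item:center2} is book-keeping about the definition of height.
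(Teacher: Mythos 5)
Your proof is correct and follows essentially the same route as the paper: identify $c$ as the common branching point of the three arcs via uniqueness of arcs, locate $x,y,z$ in three distinct components of $T\setminus\{c\}$, and push a large branch of each of $x,y,z$ into the corresponding branch of $c$. The only (harmless) difference is in part~\ref{item:center2}: you embed a large branch $B'$ of $x$ avoiding $c$ directly into $B_x$ using that $B'\ni x$, whereas the paper attaches the arc $[x,c)$ to such a branch first; both yield $\diam(B_x)\ge H_T(x)$.
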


We call the point $c\in [x,y]\cap[y,z] \cap [z,x]$ the
\emph{center} of $x,y,z$ (in $T$). 

\begin{proof} \ref{item:center1}   As we travel from $z$ to $x$ along the arc $[z,x]$, there exists a first point $c\in  [z,x]$ that lies on $[x,y]$. 
Then $c\in [x,y]\cap [z,x]$ and $[x,y]\cap [z,c)=\emptyset$. 
Moreover,  
\begin{equation}\label{eq:xyrep} 
[x,y]=[x,c)\cup\{c\} \cup  (c,y]
\end{equation}  is a decomposition of
$[x,y]$ into pairwise disjoint sets. Note that $[x,c)$ or $(c,y]$ could be empty, but this does not affect this statement. It follows that   the sets 
\begin{equation}\label{eq:disj} 
[x,c), \, [y,c),\, [z,c),\, \{c\} \text { are pairwise disjoint.}
\end{equation} 
Here we can write the  endpoints of the half-open arcs in a different order, because $[x,c)=(c,x]$, etc. 

It follows from \eqref{eq:disj} that $[x,c)\cup \{c\}\cup (c,z]$  is an arc in $T$ joining $x$ and $z$. Since $[z,x]$ is the unique such arc, we conclude that 
$$ [z,x]=[x,c)\cup \{c\}\cup (c,z], $$
and similarly,
$$ [y,z]=[y,c)\cup \{c\}\cup (c,z]. $$
Together with \eqref{eq:xyrep} and \eqref{eq:disj}, this implies that 
the arcs $[x,y]$, $[y,z]$, $[z,x]$ have the point $c$ in common, but no other point. 

\smallskip
 \ref{item:center2}   
 We may assume that $c\ne x,y,z$, because otherwise the statement is obvious. Then the sets $[x,c)$, $[y,c)$, $[z,c)$ 
are non-empty, connected, and pairwise disjoint by  \eqref{eq:disj}. It follows from  Lemma~\ref{lem:top_T}~\ref{item:top_T1} that each of these sets must lie in a different component of $T\setminus\{c\}$. In particular,
$T\setminus\{c\}$ has at least three such components, and so 
$c$ is a branch point of $T$. 
Let $B_x$, $B_y$, $B_z$ be the distinct branches of $c$ that contain $[x,c)$, $[y,c)$, $[z,c)$, respectively. 

Since $(x,c]$ is a connected subset of $T\setminus \{x\}$,  it must be contained in a branch of $x$ in $T$. Since $x$ is a branch point 
of $T$, there exists another  branch
$\widetilde B_x$ of $x$ in $T$  distinct from the branch of $x$ containing $(x,c]$. We can choose $\widetilde B_x$ so that 
$\diam(\widetilde B_x)\ge H_T(x)$ (by definition of $H_T(x)$, there are actually at least two such choices).
 
Then $\widetilde B_x\cap [x,c]=\{x\}$, and so $\widetilde B_x\cup [x,c)$ is a connected subset of $T\setminus\{c\}$.
Hence 
 $$ B_x\supset \widetilde B_x\cup [x,c)\supset  \widetilde B_x, $$
 which implies that 
 $$ \diam(B_x)\ge \diam  (\widetilde B_x)\ge H_T(x). $$
 Similarly,  $\diam(B_y)\ge H_T(y)$ and  
 $\diam(B_z)\ge H_T(z)$. It follows that 
 \begin{align*}
  H_T(c)&\ge \min \{\diam(B_x), \diam(B_y), \diam(B_z)\}\\
 &\ge \min \{  H_T(x),  H_T(y),  H_T(z)\}, 
 \end{align*} 
 as desired.  
 \end{proof}

%

\section{Quasisymmetries preserve uniform branching}
\label{sec:unif-separ-dens}

After the preparations in the previous two sections, we now
start the proof of Theorem~\ref{thm:CSST_qs}. 
We will show that uniform relative separation and uniform
relative density of branch points of a tree are preserved under
quasisymmetries. Note that we do
not assume that the trees under consideration are  trivalent, doubling, or of bounded
turning.  

We first need some auxiliary facts.  
\begin{lemma}
  \label{lem:short_branch_qs}
  Let  $F\colon T \to T'$ be a quasisymmetry between trees $T$ and $T'$. Suppose  $B_1$  and $B_2$ are  two branches in $T$ of a point $x\in T$, and 
  \begin{equation*}
     \diam(B_1)\geq \diam(B_2). 
  \end{equation*}
  Let $B'_1\coloneqq F(B_1)$ and $B'_2\coloneqq
  F(B_2)$ be their images in $T'$. Then
  \begin{equation*}
    \diam(B'_1) \gtrsim \diam(B'_2),
  \end{equation*}
  where $C(\gtrsim)$  depends only on the distortion function $\eta$ of $F$.  
\end{lemma}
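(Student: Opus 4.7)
The plan is to exploit the fact that $x$ lies in both branches $B_1$ and $B_2$, choose points in each branch that realize a definite fraction of the diameter, and then apply the quasisymmetry inequality directly followed by the distortion-of-diameter lemma.

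First, I would observe that since each branch is the closure of a component of $T\setminus \{x\}$, we have $x\in B_1\cap B_2$, and for any set $A$ containing $x$ one has $\sup_{y\in A}|y-x| \geq \tfrac12 \diam(A)$ (because for $a,b\in A$, $|a-b|\leq |a-x|+|x-b|$). So I can pick $y_1\in B_1$ and $y_2\in B_2$ with
\begin{equation*}
  |y_1-x|\geq \tfrac12\diam(B_1) \quad\text{and}\quad |y_2-x|\geq \tfrac12\diam(B_2).
\end{equation*}
In particular, $|y_i-x|\asymp \diam(B_i)$ for $i=1,2$ with constant $C_1=2$.

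Next, since $\diam(B_2)\leq \diam(B_1)$, we get
\begin{equation*}
  |y_2-x|\leq \diam(B_2)\leq \diam(B_1)\leq 2|y_1-x|.
\end{equation*}
Because $F$ is an $\eta$-quasisymmetry, this yields
\begin{equation*}
  |F(y_2)-F(x)|\leq \eta(2)\,|F(y_1)-F(x)|.
\end{equation*}
Trivially $|F(y_1)-F(x)|\leq \diam(F(B_1))$, while Lemma~\ref{lem:qs_diam} applied to the set $A=B_2$ with the points $x,y_2\in B_2$ (satisfying $|y_2-x|\asymp \diam(B_2)$) gives
\begin{equation*}
  \diam(F(B_2))\asymp |F(y_2)-F(x)|,
\end{equation*}
with constant depending only on $C_1=2$ and $\eta$.

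Combining these three estimates yields $\diam(F(B_2))\lesssim \eta(2)\diam(F(B_1))$, with implicit constant depending only on $\eta$, as required. I do not anticipate any obstacle: the only subtlety is ensuring $x\in B_2$ (so that $B_2$ contains a point from which diameter is comparably realized), which follows from the very definition of a branch.
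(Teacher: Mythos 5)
Your proof is correct and follows essentially the same route as the paper: fix a point of $B_1$ at distance at least $\tfrac12\diam(B_1)$ from $x$, compare distances to points of $B_2$ via the quasisymmetry condition with $t=2$, and convert back to diameters. The only cosmetic difference is that you invoke Lemma~\ref{lem:qs_diam} for $B_2$ where the paper takes the supremum over $z_2\in B_2$ directly, which amounts to the same computation.
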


If  $x$ is a leaf, this is trivially true, because then $B_1=B_2$. 
In general, we will not necessarily have $\diam(B'_1)\geq
\diam(B'_2)$, but 
the lemma says that this is  true ``up to a fixed constant''. 

\begin{proof}
 Under the given assumptions,  we can find a point
  $z_1\in B_1$ with $\abs{z_1-x}\geq \tfrac{1}{2} \diam(B_1)$. It
  follows that for each $z_2\in B_2$, 
  \begin{equation*}
    \abs{z_1-x} 
    \geq 
    \tfrac{1}{2} \diam(B_1) 
    \geq 
    \tfrac{1}{2} \diam(B_2)
    \geq 
    \tfrac{1}{2} \abs{z_2-x}.
  \end{equation*}
  Let $x'\coloneqq F(x)$, $z'_1\coloneqq F(z_1)$,  $z'_2\coloneqq
  F(z_2)\in T'$. Since $F$ is a quasisymmetry, we conclude that   
  \begin{equation*}
    \abs{z'_1 - x'} 
    \gtrsim
    \abs{z'_2 -x'},
  \end{equation*}
  where $C(\gtrsim)$  depends only on the distortion function $\eta$ of $F$, but not on the other
  choices. Now $z_2\in B_2$ was arbitrary, and so we obtain
  \begin{equation*}
    \diam(B'_1) 
    \geq 
    \abs{z'_1-x'} 
    \gtrsim 
    \sup_{z'_2\in B'_2}\abs{z'_2 -x'} \geq \tfrac{1}{2} \diam(B'_2).
  \end{equation*}
  The statement follows.
\end{proof}

We record the following immediate consequence. 

\begin{cor}
  \label{cor:weight_qs}  Let  $F\colon T \to T'$ be a quasisymmetry between trees $T$ and $T'$, 
  let $x\in T$ be a branch point of $T$, and $B$ be a branch
  of $x$ in $T$ with $H_T(x)=\diam(B)$. If  we set  $x'\coloneqq F(x)$ and $B'\coloneqq
  F(B)$,  then 
  \begin{equation*}
    H_{T'}(x') \asymp \diam(B').
  \end{equation*}
  Here $C(\asymp)$ depends only on the distortion function $\eta$ of $F$. 
\end{cor}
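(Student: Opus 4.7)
The plan is to apply Lemma~\ref{lem:short_branch_qs} twice, to get matching upper and lower bounds, using the fact that a homeomorphism of trees carries branches of $x$ bijectively onto branches of $x'=F(x)$.

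First I would order the branches of $x$ in $T$ as $B_1,B_2,B_3,\dots$ with
$\diam(B_1)\geq \diam(B_2)\geq \diam(B_3)\geq\dots$, so that by the definition of $H_T(x)$ and the hypothesis we have $\diam(B)=\diam(B_3)=H_T(x)$. Set $B'_i\coloneqq F(B_i)$; these are exactly the branches of $x'$ in $T'$, though no longer ordered by diameter in general.

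For the lower bound $H_{T'}(x')\gtrsim \diam(B')$, I would apply Lemma~\ref{lem:short_branch_qs} to the pairs $(B_1,B)$ and $(B_2,B)$ (each time the first branch has diameter at least that of $B$) to conclude
\begin{equation*}
  \diam(B'_1)\gtrsim \diam(B'),\qquad \diam(B'_2)\gtrsim \diam(B'),
\end{equation*}
while $\diam(F(B))=\diam(B')$. Thus at least three distinct branches of $x'$ have diameter $\gtrsim \diam(B')$, so when one reorders the branches of $x'$ by decreasing diameter, the third largest still has diameter $\gtrsim \diam(B')$; that third-largest diameter is by definition $H_{T'}(x')$.

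For the upper bound $H_{T'}(x')\lesssim \diam(B')$, I would apply Lemma~\ref{lem:short_branch_qs} to the pairs $(B,B_i)$ for each $i\geq 3$, using $\diam(B)=\diam(B_3)\geq \diam(B_i)$, to get $\diam(B')\gtrsim \diam(B'_i)$ for all $i\geq 3$. Hence at most two branches of $x'$, namely possibly $B'_1$ and $B'_2$, can have diameter exceeding a fixed multiple of $\diam(B')$; the third-largest branch of $x'$ therefore satisfies $H_{T'}(x')\lesssim \diam(B')$. Combining the two bounds gives the desired relation $H_{T'}(x')\asymp \diam(B')$, with the implicit constants depending only on the distortion function $\eta$ via those in Lemma~\ref{lem:short_branch_qs}. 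I do not anticipate any genuine obstacle; the argument is a direct bookkeeping consequence of the preceding lemma and the definition of height.
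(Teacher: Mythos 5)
Your argument is correct and coincides with the paper's own proof: both order the branches of $x$ by decreasing diameter and apply Lemma~\ref{lem:short_branch_qs} to exactly the same pairs in order to bound the diameter of the third-largest branch of $x'$ from above and below. The only cosmetic point is that you should relabel so that $B=B_3$ (always possible since $\diam(B)=\diam(B_3)$), which guarantees that $B_1$, $B_2$, $B$ are three \emph{distinct} branches in your lower-bound step.
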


\begin{proof}
  The branches $B_1, B_2, B_3, \dots$  of $x$ in $T$ can be labeled such
  that 
  \begin{equation*}
    \diam(B_1) \geq \diam(B_2) \geq \diam(B_3) \geq \dots 
  \end{equation*}
  and $B_3=B$. 
  
  Let $B'_n\coloneqq F(B_n)$ for all $n\in \N$ for which $B_n$ is defined. Since $F\: T\ra T'$ is a quasisymmetry, and in particular a homeomorphism from $T$ onto $T'$, the point $x'=F(x)$ is a branch point of $T'$, and 
  $B'_1, B'_2, B'_3, \dots$ are the branches of $x'$ in $T'$.
  Note that $B'=F(B)=F(B_3)=B_3'$. 
   
By  Lemma~\ref{lem:short_branch_qs} we know that
  \begin{align*}
    \diam(B'_1) &\gtrsim \diam(B'_3),
    \
                  \diam(B'_2) \gtrsim \diam(B'_3), \    \text{and } 
    \\
  \diam(B'_3) &\gtrsim \diam(B'_n)
                     \text{ for all $n\geq 4$}, 
  \end{align*}
  where $C(\gtrsim)$ depends only on the
  distortion function $\eta$ of $F$. 
  
  This shows that  the three 
  branches $B'_1$, $B_2'$, $B_3'$ of $x'$ in $T'$ have diameter 
  $\gtrsim  \diam(B'_3)$, and so $H_{T'}(x')\gtrsim  \diam(B'_3)$. On 
  the other hand, we also see that all other branches have diameter 
  $\lesssim  \diam(B'_3)$, and so $H_{T'}(x')\lesssim   \diam(B'_3)$.
  Hence $H_{T'}(x')\asymp \diam(B'_3)=\diam(B')$, and the statement follows.
\end{proof}

We now  consider uniform relative
separation of branch points. 

\begin{lemma}[Qs-invariance of uniform relative separation]
  \label{lem:unif_sepa_qs}
  Suppose $F\colon T \to T'$ is a quasisymmetry between trees $T$
  and $T'$.  If $T$ has uniformly relatively separated branch
  points, then the same is true for $T'$.
  \end{lemma}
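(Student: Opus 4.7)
The plan is to transfer the separation inequality from $T$ to $T'$ point-by-point, using that a quasisymmetry has good control on ratios of distances, together with Corollary~\ref{cor:weight_qs} to compare heights of corresponding branch points.

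Concretely, I would start by fixing two distinct branch points $p',q'\in T'$ and letting $p=F^{-1}(p')$, $q=F^{-1}(q')$. Since $F$ is a homeomorphism, $p$ and $q$ are distinct branch points of $T$. After relabeling, I may assume $H_T(p)\le H_T(q)$, so by the uniform relative separation hypothesis on $T$,
\begin{equation*}
  H_T(p)=\min\{H_T(p),H_T(q)\}\lesssim \abs{p-q}.
\end{equation*}
Choose a branch $B$ of $p$ in $T$ with $\diam(B)=H_T(p)$. Then pick $x\in B$ with $\abs{x-p}\ge \tfrac{1}{2}\diam(B)$; this gives $\abs{x-p}\asymp\diam(B)=H_T(p)$ with $C(\asymp)=2$, and in particular $\abs{x-p}\le H_T(p)\lesssim\abs{p-q}$ with constant depending only on the separation constant of $T$.

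Next I would push this comparison over to $T'$ using the quasisymmetry bound: since $\abs{x-p}\lesssim\abs{p-q}$, the definition \eqref{eq:defqs} of an $\eta$-quasisymmetry yields $\abs{F(x)-F(p)}\lesssim \abs{F(p)-F(q)}=\abs{p'-q'}$, with implicit constant depending on $\eta$ and the separation constant. Now Lemma~\ref{lem:qs_diam}, applied to the set $A=B$ with the pair $x,p\in A$ satisfying $\abs{x-p}\asymp\diam(A)$, gives $\diam(F(B))\asymp \abs{F(x)-F(p)}$. Combining this with Corollary~\ref{cor:weight_qs}, which says $H_{T'}(p')\asymp\diam(F(B))$ (depending only on $\eta$), I obtain
\begin{equation*}
  H_{T'}(p')\asymp\diam(F(B))\asymp\abs{F(x)-F(p)}\lesssim \abs{p'-q'}.
\end{equation*}
Since $\min\{H_{T'}(p'),H_{T'}(q')\}\le H_{T'}(p')$, this yields $\min\{H_{T'}(p'),H_{T'}(q')\}\lesssim\abs{p'-q'}$, which is precisely uniform relative separation of branch points in $T'$.

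There is no real obstacle here: the argument is essentially bookkeeping once one has the right auxiliary lemmas. The only subtle point is remembering that we are free to choose \emph{which} branch point contributes the minimum height (so one works with $p$ after relabeling) and that the branch $B$ realizing $H_T(p)=\diam(B)$ must be chosen explicitly so that Corollary~\ref{cor:weight_qs} applies to translate heights across $F$. All dependencies of the implicit constants are on $\eta$ and the separation constant of $T$ alone, so the conclusion holds uniformly in $p',q'$.
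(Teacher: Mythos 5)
Your proof is correct and follows essentially the same route as the paper: reduce to the branch point realizing the minimum height, compare $H_T$ to the distance of a near-extremal point of the minimal branch, push through the quasisymmetry, and translate back to $H_{T'}$ via Corollary~\ref{cor:weight_qs}. The only cosmetic difference is that the paper takes a supremum over all points of the branch where you pick one near-diametral point and invoke Lemma~\ref{lem:qs_diam}; the two are interchangeable.
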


\begin{proof}
  Suppose $T$ has uniformly relatively separated branch
  points. By definition, this means that there is a constant
  $C>0$ such that
  \begin{equation}\label{eq:Thest0}
    \abs{x-y} \geq \frac{1}{C} \min\{H_T(x), H_T(y)\}
  \end{equation}
  for all distinct branch points $x,y\in T$. Since $F$ is a
  quasisymmetry and hence a homeomorphism,  the branch points of $T$ and $T'$
  correspond to each other under the map $F$. So in order to
  establish the statement, it is enough to show an estimate
  analogous to \eqref{eq:Thest0} for the image points of $x$ and
  $y$ under $F$.  Without loss of generality, we may assume that
  $H_T(x) \leq H_T(y)$, i.e., that the minimum in
  \eqref{eq:Thest0} is attained for $x$.

  Let $B$ be a branch of $x$ in $T$ with
  $\diam(B)= H_T(x)$, and   consider an
  arbitrary point  $z\in B$. 
  Then
  \begin{equation*}
    \abs{x-y} 
    \geq 
    \frac{1}{C} H_T(x) 
    =
    \frac{1}{C} \diam(B) 
    \geq
    \frac{1}{C} \abs{x-z}.
  \end{equation*}
  Let
  $x'\coloneqq F(x), y'\coloneqq F(y), z'\coloneqq F(z)\in T'$,
  and $B'\coloneqq F(B)$ be the images under $F$. If  $F$
  is an  $\eta$-quasisymmetry, then the previous inequality implies that
  \begin{equation*}
    \abs{x'-y'} 
    \gtrsim
    \abs{x'-z'},
  \end{equation*}
  where $C(\gtrsim)$  depends only on $C$ and $\eta$.  Now  $z\in B$ was arbitrary, and
  so  
  \begin{align*}
    \abs{x'-y'} &
                  \gtrsim 
                  \sup_{z'\in B'}\abs{x'-z'} 
                  \geq 
                  \tfrac{1}{2} \diam(B')
    \\    
                &\asymp 
                  \tfrac{1}{2} H_{T'}(x'), &&\text{ by
                  Corollary~\ref{cor:weight_qs}}
    \\
                &\geq
                  \tfrac{1}{2} \min\{H_{T'}(x'), H_{T'}(y')\}. 
  \end{align*}
  This implies that $T'$ has uniformly relatively separated
  branch points with a constant that depends only on the uniform
  relative separation constant of $T$ and the distortion function
  $\eta$ of the quasisymmetric homeomorphism $F$.
\end{proof}

Let us now consider uniform relative density of branch points. 
This condition is equivalent to the following, seemingly
stronger, property.

\begin{lemma}
  \label{lem:unif_dense_diam}
A  tree  $T$ has  uniformly relatively  dense
 branch points  if and only if for all $x,y\in T$, $x\ne y$,  there is a branch point $z\in [x,y]$ with 
  \begin{equation*}
    H_T(z) \gtrsim \diam[x,y],
  \end{equation*}
  where $C(\gtrsim)$ is independent of $x$ and $y$. 
\end{lemma}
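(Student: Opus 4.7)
The plan is to prove the two implications separately, with one being essentially trivial and the other requiring a short geometric argument in the tree.

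For the ``if'' direction, I would simply note that for any $x,y\in T$ with $x\ne y$, the endpoints satisfy $x,y\in[x,y]$, hence $\abs{x-y}\le\diam[x,y]$. Thus a branch point $z\in[x,y]$ with $H_T(z)\gtrsim\diam[x,y]$ automatically satisfies $H_T(z)\gtrsim\abs{x-y}$, giving uniform relative density.

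For the ``only if'' direction, assume $T$ has uniformly relatively dense branch points. Fix $x,y\in T$ with $x\ne y$, and write $D=\diam[x,y]>0$. Using the definition of diameter as a supremum, I would choose $a,b\in[x,y]$ with $\abs{a-b}\ge D/2$; in particular $a\ne b$. The key tree-theoretic observation is that if $a,b$ lie on the arc $[x,y]$, then the unique arc $[a,b]$ joining them in $T$ is the corresponding sub-arc of $[x,y]$, so $[a,b]\subset[x,y]$. (This is because $[x,y]$ is homeomorphic to an interval and sub-arcs are unique in a tree.)

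Then I would apply the hypothesis of uniform relative density to the pair $a,b$: this yields a branch point $z\in[a,b]\subset[x,y]$ with
\begin{equation*}
  H_T(z)\gtrsim\abs{a-b}\ge D/2=\tfrac{1}{2}\diam[x,y],
\end{equation*}
where the implicit constant differs from the one in Definition~\ref{def:unif_dense} only by the factor $1/2$. This produces the desired branch point on $[x,y]$ and completes the proof. There is no serious obstacle here; the only substantive point is the standard tree fact that $[a,b]\subset[x,y]$ whenever $a,b\in[x,y]$, which already follows from Lemma~\ref{lem:top_T} together with the uniqueness of arcs.
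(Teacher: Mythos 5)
Your proof is correct and follows essentially the same route as the paper: the ``if'' direction via $\abs{x-y}\le\diam[x,y]$, and the ``only if'' direction by picking two points on $[x,y]$ whose distance is at least half the diameter and applying uniform relative density to that pair (the paper uses the pair $x,w$ with $\abs{x-w}=\tfrac12\diam[x,y]$, you use a general pair $a,b$; the sub-arc containment $[a,b]\subset[x,y]$ is the same standard fact in both cases). No issues.
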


\begin{proof}
  Since we always have $\diam[x,y] \geq \abs{x-y}$, it is obvious that the above
  condition implies uniform relative density of branch points. 

  Conversely, assume that $T$ has  uniformly relatively dense branch points. 
   Let $x,y\in T$ with $x\ne y$
  be arbitrary. Then there is a point $w\in [x,y]$ with $\abs{x-w}
  =\frac{1}{2} \diam[x,y]$. Uniform relative density of branch points now implies that
  there is a branch point $z\in [x,w]\sub  [x,y]$ with 
  $$H_T(z) \gtrsim
  \abs{x-w} = \tfrac{1}{2} \diam[x,y]. $$ The statement follows.   
\end{proof}

After these preparations, we are now ready for the second main
result of this section.

\begin{lemma}
  [Qs-invariance of uniform relative density]
  \label{lem:unif_dense_qs}  Let  $F\colon T \to T'$ be a quasisymmetry between trees $T$ and $T'$.
 If $T$ has uniformly relatively dense branch points, then the same is true for $T'$. 
\end{lemma}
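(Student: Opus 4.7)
The plan is to mirror the proof of Lemma~\ref{lem:unif_sepa_qs}, using Lemma~\ref{lem:unif_dense_diam} to work with the stronger formulation of uniform relative density in terms of diameters. Let $x',y'\in T'$ with $x'\ne y'$ be arbitrary, and set $x\coloneqq F^{-1}(x')$, $y\coloneqq F^{-1}(y')\in T$.  Since $F$ is a homeomorphism between trees, it maps the unique arc $[x,y]\sub T$ onto the unique arc $[x',y']\sub T'$, and it sends branch points to branch points.  By Lemma~\ref{lem:unif_dense_diam} applied to $T$, there exists a branch point $p\in [x,y]$ with
\begin{equation*}
  H_T(p)\gtrsim \diam[x,y].
\end{equation*}
Set $p'\coloneqq F(p)$; then $p'\in [x',y']$ and $p'$ is a branch point of $T'$.

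Next, pick a branch $B$ of $p$ in $T$ realizing $H_T(p)=\diam(B)$, and choose $z\in B$ with $|p-z|\ge \tfrac{1}{2}\diam(B)$.  The key observation is that both $|x-p|$ and $|y-p|$ are controlled by $|p-z|$: indeed,
\begin{equation*}
  |x-p|\le \diam[x,y] \lesssim H_T(p)=\diam(B) \le 2|p-z|,
\end{equation*}
and similarly for $|y-p|$.  So there exists a constant $C_0$, depending only on the uniform relative density constant of $T$, such that $|x-p|\le C_0|p-z|$ and $|y-p|\le C_0|p-z|$.  Now I apply the quasisymmetry condition \eqref{eq:defqs} with basepoint $p$: if $F$ is an $\eta$-quasisymmetry, then writing $z'\coloneqq F(z)$ we obtain
\begin{equation*}
  |x'-p'|\le \eta(C_0)|p'-z'|\forr |y'-p'|\le \eta(C_0)|p'-z'|.
\end{equation*}

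Combining these via the triangle inequality yields $|x'-y'|\le 2\eta(C_0)|p'-z'|$.  Since $z\in B$, we have $z'\in F(B)$, and both $p',z'\in F(B)$, so $|p'-z'|\le \diam(F(B))$.  Finally, Corollary~\ref{cor:weight_qs} gives $\diam(F(B))\asymp H_{T'}(p')$ with a constant depending only on $\eta$.  Putting everything together,
\begin{equation*}
  |x'-y'|\lesssim H_{T'}(p'),
\end{equation*}
with an implicit constant depending only on the uniform relative density constant of $T$ and on $\eta$.  Since $p'\in[x',y']$ is a branch point of $T'$, this is precisely the uniform relative density condition for $T'$.

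The argument is essentially routine once Lemma~\ref{lem:unif_dense_diam} and Corollary~\ref{cor:weight_qs} are in hand; the only point requiring a bit of care is the choice of $z$ and $p$ as basepoints so that the quasisymmetry inequality delivers a bound on $|x'-y'|$ in terms of a quantity $|p'-z'|$ which is visibly bounded by $\diam(F(B))$.  I expect no serious obstacle: this is the ``dual'' of the separation argument, trading the role of distinct branch points with $H_T$-bounded separation for a single branch point with a large branch containing the auxiliary point $z$.
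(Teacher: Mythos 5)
Your proof is correct and follows essentially the same route as the paper's: both arguments invoke Lemma~\ref{lem:unif_dense_diam}, pick the branch $B$ realizing $H_T(p)$ together with a point $z\in B$ with $|p-z|\ge\tfrac12\diam(B)$, apply the quasisymmetry with basepoint $p$, and finish with Corollary~\ref{cor:weight_qs}. The only (cosmetic) difference is that the paper bounds $\sup_{q'\in[x',y']}|z'-q'|$ and hence $\diam[x',y']$, whereas you apply the quasisymmetry only at $q=x$ and $q=y$ and use the triangle inequality through $p'$.
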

  
\begin{proof} Suppose $T$ has uniformly relatively dense branch points. 
  Let $x,y\in T$ with $x\ne y$ be arbitrary.  Then by Lemma~\ref{lem:unif_dense_diam}
  there is a branch point $z\in [x,y]$ of $T$ with
  \begin{equation} \label{eq:Tunifd0}
    H_T(z) \gtrsim \diam[x,y]. 
  \end{equation}

 Since $F$ is a homeomorphism, the branch points and arcs in $T$ 
 and $T'$ correspond to each other under $F$. This implies that in order to establish the statement, it is enough to show an inequality
 analogous to \eqref{eq:Tunifd0} for the image points of $x$, $y$, and $z$ under $F$.

 To see this, let $B\subset T$ be a branch of $z$ in $T$ with
 $H_T(z) = \diam(B)$. Then there is a point $w\in B$ with
  \begin{equation*}
    \abs{z-w} 
    \geq 
    \tfrac{1}{2} \diam(B) 
    = 
    \tfrac{1}{2} H_T(z) \gtrsim \diam[x,y].
  \end{equation*}
 Thus,  for each point $q\in [x,y]$ we  obtain
  \begin{equation*}
    \abs{z-w} \gtrsim \diam[x,y]\geq \abs{z-q}.
  \end{equation*}
  Let $x'\coloneqq F(x)$, $y'\coloneqq F(y)$, $z'\coloneqq F(z)$,
  $w'\coloneqq F(w)$, $q'\coloneqq F(q)$, and $B'\coloneqq F(B)$. 
  Note that $z'\in F([x,y])=[x',y']$. 
  Using Corollary~\ref{cor:weight_qs} and the fact that $F$ is a  quasisymmetry, 
  we see that 
  \begin{equation*}
    H_{T'}(z') 
    \asymp 
    \diam(B')
    \geq
    \abs{z'-w'}
    \gtrsim
    \abs{z'-q'}.
  \end{equation*}
  Since $q\in [x,y]$ was arbitrary, this gives
    \begin{equation*}
    H_{T'}(z') 
    \gtrsim 
    \sup_{q'\in [x',y']}\abs{z'-q'}
    \geq \tfrac{1}{2} \diam[x',y']\ge  \tfrac{1}{2} |x'-y'|.
  \end{equation*}
 Here $z'=F(z)\in [x',y']$ in a branch point of $T'$, and $C (\gtrsim)$ is independent of $x'$, $y'$, 
  and $z'$. The statement follows. 
\end{proof}

Our considerations show that uniform branching of a tree is
invariant under quasisymmetric equivalence. To complete the proof
of the ``only if'' implication in Theorem~\ref{thm:CSST_qs}, we
need to show that the continuum self-similar tree is uniformly
branching. Naturally, this means that first we have to define it
precisely. We address this in the next section.

%
%

\section{The continuum self-similar tree}
\label{sec:cont-self-simil}

The  continuum self-similar tree  (abbreviated as CSST) is a standard model for  a
quasiconformal tree that is trivalent and uniformly branching. 
It was studied in the paper \cite{BT}, to which we refer for
more details and references to the literature.  

For the definition of the CSST we
consider the maps $g_k\colon \C\to \C$, $k=1,2,3$, given by
\begin{equation}
  \label{eq:def_fj}
  g_1(z)=\tfrac{1}{2}z-\tfrac{1}{2},
  \quad
  g_2(z) =\tfrac{1}{2}\bar{z} + \tfrac{1}{2},
  \quad
  g_3(z) = \tfrac{i}{2}\bar{z} + \tfrac{i}{2}.
\end{equation}
The {\em continuum self-similar tree}  is the attractor $\cT$ of the iterated function system
$\{g_1,g_2,g_3\}$ (see \cite[Theorem 9.1]{Fa03} for a general result in this direction). In other words,  it is the unique non-empty compact set
$\cT\subset \C$ satisfying
\begin{equation}\label{eq:attracIFS}
  \cT = g_1(\cT) \cup g_2(\cT) \cup g_3(\cT). 
\end{equation}
See Figure~\ref{fig:CSST} for an illustration of the set $\cT$. 


We  summarize some properties of the CSST. Since $\cT\sub \C$,  the CSST  inherits the Euclidean metric from
$\C$. Then $\cT$ is a trivalent metric tree \cite[Propositions
1.4 and 1.5]{BT}. It  is a {\em
  quasi-convex} subset of $\C$ in the following sense: there exists  a constant $L>0$ such that if  $x,y\in \cT$ are arbitrary and
$[x,y]$ is the unique arc in $\cT$ joining $x$ and $y$, then
\begin{equation}
  \label{eq:cT_geod}
  \abs{x-y} \leq \length[x,y] \leq L \abs{x-y}
\end{equation}
(see \cite[Proposition~1.6]{BT}).
 In particular, $\cT$ is of bounded
turning, since 
\begin{equation*}
  \diam[x,y]\le \length [x,y]\le L\abs{x-y}.
\end{equation*}
 As a subset of $\C$, the space $\cT$ is also doubling.  Thus, $\cT$
is a quasiconformal tree. In Proposition~\ref{prop:CSST_unif}
below, we will see that $\cT$ is uniformly branching.

Inequality ~\eqref{eq:cT_geod} implies that  if we  define
\begin{equation}
  \label{eq:def_geod_cT}
  \varrho(x,y) \coloneqq \length[x,y] 
\end{equation}
for $x,y\in \cT$, then $(\cT, \varrho)$ is a geodesic tree  that
is bi-Lipschitz equivalent to $\cT$
equipped with the Euclidean metric (by the identity map on
$\cT$). The space $(\cT, \varrho)$ is isometric to the abstract version
of the CSST as defined in the introduction (see the  
discussion at the end of
Section~4 in \cite{BT}). 

Considering $\cT$ as a subset of $\C$ has the advantage that 
we have the simple characterization \eqref{eq:attracIFS} of  $\cT$   with  explicit maps  $g_k$  as in  \eqref{eq:def_fj}. On the other
hand, we never consider points in $\C\setminus \cT$. Therefore, 
 we think of  $\cT$ as the ambient space when we discuss 
  topological properties of a   set $M\subset \cT$. 
 For example, we denote by $\partial
M$ the  (relative) boundary of $M$ in $\cT$. Naturally, this
is  in general different from the boundary of $M$ as a
subset of $\C$. Similarly, the interior of $M$ is always
understood to be relative to $\cT$.

The CSST  can be decomposed  into subtrees in a natural way. These
subtrees will be labeled by words in an alphabet. Here we use the
set $\A\coloneqq\{1,2,3\}$ as our \emph{alphabet}. It consists
of the \emph{letters} $1,2,3$. For each $n\in \N$, let
$\A^n\coloneqq\{w_1\dots w_n : w_j\in \A \text{ for }
j=1,\dots,n\}$ be the set of all \emph{words} of {\em length}
$n$. We set $\A^0\coloneqq\{\emptyset\}$; so the empty word
$\emptyset$ is the only word of length $0$. If $n\in \N_0$ and $w\in \A^n$, we denote by $\ell(w) \coloneqq n$
the length of $w$. Finally, we denote by
$\A^*\coloneqq \bigcup_{n\in \N_0} \A^n$ the set of all
\emph{finite} words. 

If $u=u_1\dots
u_k\in \A^k$ and $w=w_1\dots w_n\in \A^n$ with $k,n\in \N_0$, then 
$$uw\coloneqq u_1\dots u_kw_1\dots w_n\in \A^{k+n}$$ denotes the concatenation of the 
words $u$ and $w$. 

For a finite word $w=w_1\dots w_n\in \A^n$ of  length  $n\in
\N_0$, we define the map $g_w \colon \C\to \C$ by
\begin{equation*}
  g_w\coloneqq g_{w_1}\circ g_{w_2} \circ \dots \circ g_{w_n}.
\end{equation*}
Here it is understood  that $g_\emptyset\coloneqq\id_{\C}$ is the
identity map on $\C$ for the empty word $w=\emptyset$. The map
$g_w$ is a Euclidean similarity that scales distances by the
factor $2^{-\ell(w)}$. 

For each  $w\in \A^*$ we now set
\begin{equation}
  \label{eq:def_Tw}
  \cT_w\coloneqq g_w(\cT). 
\end{equation}
It follows from repeated application of \eqref{eq:attracIFS} 
that $\cT_w\sub \cT$. Hence $\cT_w=g_w(\cT)$ is a subtree of
$\cT$, because  $\cT_w$ is a tree as the image of $\cT$ under the
homeomorphism $g_w$. We use the same notation  for the restriction of $g_w$ to $\cT$ and consider this as a map 
$g_w\: \cT\ra g_w(\cT)=\cT_w\sub \cT$.

Since $\diam(\cT)=2$, we have
\begin{equation}
  \label{eq:diasmtile} \diam(\cT_w) = 2^{-\ell(w)+1}
 \end{equation}
 (see 
 \cite[(4.15)] {BT}).
We
call the sets of the form $\cT_w= g_w(\cT)$ the {\em tiles} of
$\cT$ and denote by
\begin{equation}
  \label{eq:defcX}
  \cX^*\coloneqq \{\cT_w: w\in \A^*\}
\end{equation}
the set of all tiles. If $\cT_w\in \cX^*$, then we call
$n=\ell(w)\in \N_0$ the {\em level} of the tile, which we also
denote by $\ell(\cT_w) \coloneqq n$. We then say that $\cT_w$ is a tile of
{\em level} $n$, or simply an $n$-{\em tile}. By \eqref{eq:diasmtile}
the level of  a tile is well defined.  The set  $\cT_\emptyset=\cT$ is the only $0$-tile, and  there are three
$1$-tiles, namely  $\cT_1$, $\cT_2$, $\cT_3$. For an illustration see Figure~\ref{fig:subtrees} which shows the $1$-tiles, and Figure~\ref{fig:2tiles} which shows some of the $2$-tiles (note that here  the dotted lines indicate tiles, but are  not part of the CSST).

\begin{figure}[h]
 \begin{overpic}[ scale=0.7
 , clip=true, trim=0mm 15mm 0mm 0mm
    ]{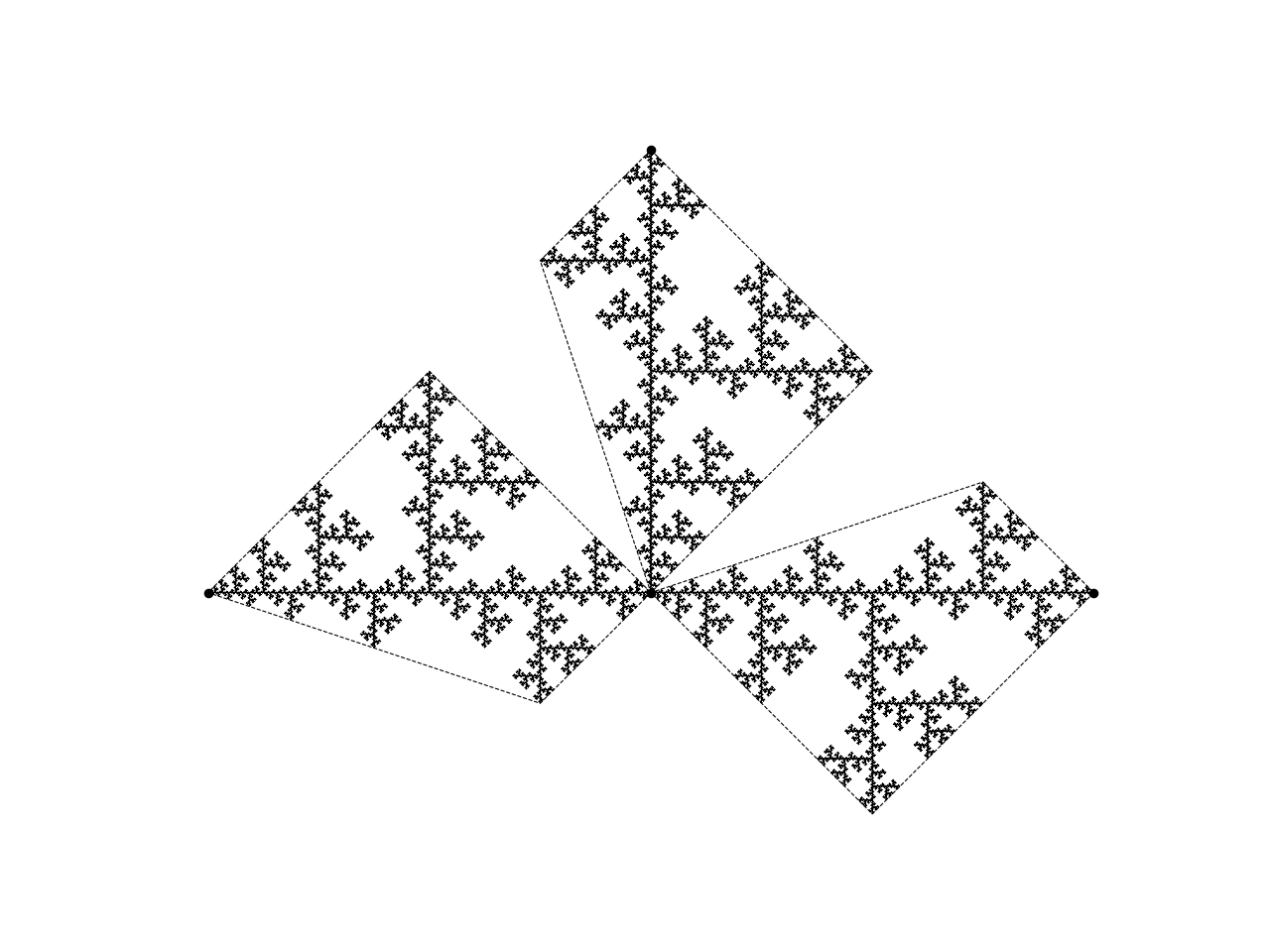}
    \put(49.5,56){ $i$}
    \put(12,15.5){ $-1$}
     \put(85,15.5){ $1$}
          \put(19, 29){ $\cT_1$}
            \put(76,6){ $\cT_2$}
              \put(60,46){ $\cT_3$}
            \put(49.5, 15.5){ $0$}
                          \end{overpic}
\caption{The CSST  and its subtrees $\cT_1$, 
$\cT_2$, 
$\cT_3$.}
\label{fig:subtrees}
\end{figure}

We record some  properties of $1$-tiles for later use.

\begin{lemma}  \label{lem:1tilesCSST}
  The following statements are true:
   \begin{enumerate}
  \item
    \label{item:1tilesCSST1}
    The point $0\in \C$ is a branch point of  $\cT$ with the three branches  $\cT_1,\cT_2,\cT_3$. 
  \item
    \label{item:1tilesCSST2}
    The points $-1$ and $1$ are leaves of $\cT$ with 
    \begin{align*}
      -1&\in \cT_1  \;\text{ and }\; {-1}\notin \cT_2,\cT_3;
      \\
      1&\in \cT_2 \;\text{ and }\; {\phantom{-}1}\notin \cT_1,\cT_3.
    \end{align*}
  \item
    \label{item:1tilesCSST3}
    Let $x,y\in \cT$ be contained in distinct $1$-tiles. Then
    \begin{equation*}
      \abs{x-y}
      \gtrsim
      \abs{x} + \abs{y},
    \end{equation*}
    where $C(\gtrsim)$ is independent of $x$ and $y$.  
  \end{enumerate}
\end{lemma}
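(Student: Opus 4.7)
The strategy exploits the self-similar structure of $\cT$ together with the geometric fact that the three $1$-tiles are confined to closed disks pairwise tangent at the origin. First I would verify the \emph{disk containment} $\cT_k \subset \overline{D_k}$ for $\overline{D_k}\coloneqq\overline{B(c_k,1/2)}$ with $c_1=-1/2$, $c_2=1/2$, $c_3=i/2$. This follows from $\cT\subset\overline{B(0,1)}$, which is a simple induction: each $g_k$ maps $\overline{B(0,1)}$ into $\overline{D_k}\subset\overline{B(0,1)}$, so iterating \eqref{eq:attracIFS} keeps $\cT$ inside $\overline{B(0,1)}$. A direct check shows the $\overline{D_k}$ are pairwise tangent at $0$, giving $\cT_k\cap\cT_\ell\subset\{0\}$ for $k\ne\ell$. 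Next, $-1$ and $1$ are fixed points of the contractions $g_1$ and $g_2$, hence belong to $\cT$; and $g_1(1)=g_2(-1)=g_3(-1)=0$ shows $0\in\cT_k$ for each $k$. Combining, $\cT_i\cap\cT_j=\{0\}$ whenever $i\ne j$.

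For \ref{item:1tilesCSST1}, I would conclude as follows. The disjoint decomposition $\cT\setminus\{0\}=U_1\sqcup U_2\sqcup U_3$ with $U_k\coloneqq\cT_k\setminus\{0\}$ exhibits three non-empty sets that are open in $\cT\setminus\{0\}$ and pairwise topologically separated (their closures in $\cT$, namely the $\cT_k$, intersect only at $0$). Hence $\cT\setminus\{0\}$ has at least three components. Since $\cT$ is trivalent \cite[Propositions 1.4 and 1.5]{BT}, it has at most three. Therefore each $U_k$ is itself a component, hence connected, and $\cT_k=\overline{U_k}$ is the corresponding branch of $0$ in $\cT$.

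For \ref{item:1tilesCSST2}, the membership statements are immediate from the disk containment, since $\abs{-1-c_k}>1/2$ for $k=2,3$ (and symmetrically for $+1$). To show that $\pm 1$ are leaves, I would invoke the bi-Lipschitz identification of $\cT$ with its abstract realization as the completion of the ascending sequence $\{J_n\}$ from the introduction (see the end of \cite[Section~4]{BT}); under this identification $\pm 1$ correspond to the endpoints of $J_0=[-1,1]$, which are never subdivided and so remain leaves of the completion. A self-contained alternative would iterate the self-similar structure to show that $-1$ lies only in the tiles $\cT_{1^n}$, each with one-point relative boundary $\{g_{1^{n-1}}(0)\}$ distinct from $-1$, producing a nested basis of connected open neighborhoods of $-1$, from which leafness can be extracted.

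For \ref{item:1tilesCSST3}, I would combine \ref{item:1tilesCSST1} with the quasi-convexity \eqref{eq:cT_geod} of $\cT$. Since $\cT_i$ and $\cT_j$ are subtrees of $\cT$ meeting only at $0$, one has $\partial\cT_i=\{0\}=\partial\cT_j$, so for $x\in\cT_i$ and $y\in\cT_j$ with $i\ne j$ the unique arc $[x,y]\subset\cT$ must pass through $0$ and decompose as $[x,0]\cup[0,y]$. Therefore
\begin{equation*}
 L\abs{x-y}\;\geq\;\length[x,y]\;=\;\length[x,0]+\length[0,y]\;\geq\;\abs{x}+\abs{y},
\end{equation*}
giving $\abs{x-y}\geq(\abs{x}+\abs{y})/L$. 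I expect the leaf assertion in \ref{item:1tilesCSST2} to be the main obstacle for a fully self-contained proof that bypasses the abstract construction of $\cT$.
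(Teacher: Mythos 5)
The central step of your argument --- that the three closed disks $\overline{D_k}=\overline{B(c_k,1/2)}$ are pairwise tangent at $0$ --- is false, and this breaks the proof of \ref{item:1tilesCSST1}. Only $\overline{D_1}$ and $\overline{D_2}$ are tangent: for the pair $\overline{D_1},\overline{D_3}$ the centers $-1/2$ and $i/2$ are at distance $\sqrt{2}/2$, which is strictly less than the sum $1$ of the radii, so these two disks overlap in a full lens (containing, for instance, $-1/4+i/4$); the same holds for $\overline{D_2},\overline{D_3}$. Consequently the disk containment only yields $\cT_1\cap\cT_2\subset\{0\}$ and gives no information about $\cT_1\cap\cT_3$ or $\cT_2\cap\cT_3$. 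The identity $\cT_k\cap\cT_\ell=\{0\}$ for all $k\ne\ell$ is a genuine fact about the attractor rather than about its bounding disks, and it is precisely the content of \cite[Lemma~4.5]{BT}, which is what the paper cites for \ref{item:1tilesCSST1}. Without it, your decomposition of $\cT\setminus\{0\}$ into the sets $U_k$ is not known to consist of pairwise disjoint relatively closed sets, so you cannot conclude that $\cT\setminus\{0\}$ has at least three components; parts \ref{item:1tilesCSST1} and, through the step $0\in[x,y]$, part \ref{item:1tilesCSST3} are therefore left unproved.

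The remainder is sound modulo this gap. Your derivation of \ref{item:1tilesCSST3} from \ref{item:1tilesCSST1} together with quasi-convexity is exactly the paper's argument. Your disk computation does correctly establish the non-membership statements in \ref{item:1tilesCSST2}, since $\abs{-1-c_k}>1/2$ for $k=2,3$ and $\abs{1-c_k}>1/2$ for $k=1,3$; this is in fact slightly more self-contained than the paper's appeal to \eqref{eq:1tilesint}. For the leaf property of $\pm1$ the paper simply cites \cite[Proposition~4.2 and Lemma~4.4]{BT}; your two sketches point in a workable direction, but neither is complete as written --- in particular, exhibiting a nested basis of connected neighborhoods of $-1$ does not by itself show that $\cT\setminus\{-1\}$ is connected. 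To repair the proposal, either cite \cite[Lemma~4.5]{BT} for \ref{item:1tilesCSST1} as the paper does, or supply an actual argument that $\cT_3$ meets $\cT_1$ and $\cT_2$ only at $0$.
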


In particular, \ref{item:1tilesCSST1} means that the set
$\cT\setminus \{0\}$ has precisely three distinct components $U_k$,
$k=1,2,3$, and with suitable labeling we have 
$\mybar {U}_k=U_k\cup \{0\}=\cT_k$. Hence Lemma~\ref{lem:vt}~\ref{item:vt2} implies that 
  \begin{equation}
    \label{eq:bdry_T1}
    \partial \cT_1
    =
    \partial \cT_2
    =
    \partial \cT_3
    =\{0\}.
  \end{equation}
Moreover,  the subtrees $\cT_1$, $\cT_2$, $\cT_3$ of
  $\cT$ are pairwise disjoint except for the common point $0$. In other words, 
   \begin{equation}
    \label{eq:1tilesint}
 \cT_k\cap\cT_\ell=\{0\} \text{ for $k,\ell\in \{1,2,3\}$ with $k\ne \ell$.}   
\end{equation}

\begin{proof} [Proof of Lemma~\ref{lem:1tilesCSST}] 
 \ref{item:1tilesCSST1}
  This follows from 
  \cite[Lemma~4.5~(i)]{BT}.
  
  \smallskip \noindent 
   \ref{item:1tilesCSST2} By  \cite[Proposition~4.2 and Lemma~4.4 (ii)]{BT} the points 
    $-1$ and $1$ belong to $\cT$ and are leaves of $\cT$. 
    Moreover, $-1=g_1(-1)\in  \cT_1$ and
  $1=g_2(1) \in \cT_2$ (see   \eqref{eq:def_fj}). Since $0$ is the only common point of two
  distinct $1$-tiles, we also know that $1\notin \cT_2,
  \cT_3$ and  $-1\notin \cT_1, \cT_3$.

\smallskip  \noindent 
 \ref{item:1tilesCSST3} Let $x,y\in \cT$ be contained
  in two distinct $1$-tiles. Then $0\in [x,y]$ by
  \ref{item:1tilesCSST1}. If $L>0$ is  the constant in 
  \eqref{eq:cT_geod}, then we obtain 
  \begin{equation*}
    L\abs{x-y}
    \geq
    \length[x,y]
    =
    \length[x,0] + \length[0,y]
    \geq
    \abs{x} + \abs{y},
  \end{equation*}
  and \ref{item:1tilesCSST3} follows. 
\end{proof}

If  $w\in \A^*$ and  $n=\ell(w)\in \N_0$, then
$g_w\colon \cT\to \cT_w$ is a homeomorphism that maps the
$1$-tiles $\cT_1,\cT_2,\cT_3\subset \cT$ onto the $(n+1)$-tiles
$\cT_{w1}, \cT_{w2}, \cT_{w3}\subset \cT_w$. So the statements
in Lemma~\ref{lem:1tilesCSST} can be translated via the map $g_w$ to
statements about $\cT_w$. For example, $g_w(0)$ is a branch point of
$\cT_w$ with three branches given by the 
$(n+1)$-tiles $\cT_{w1}, \cT_{w2}, \cT_{w3}$. In particular, 
\begin{equation} \label{eq:wunion}
\cT_w =\cT_{w1}\cup \cT_{w2} \cup \cT_{w3}.
\end{equation}
Since this is true for all $w\in \A^*$, it is easy to see that if  
$m,n\in \N_0$ with $n\geq m$ and  $X$ is an   $n$-tile,  then 
there exists 
an $m$-tile $Y$ with $X\subset Y$.

\subsection*{Branch points of  \texorpdfstring{$\cT$}{T} and boundaries of tiles}
\label{sec:branch-points-ct}

In order to show that $\cT$ is uniformly branching  (see Proposition~\ref{prop:CSST_unif}), we need to locate the
branch points of $\cT$ and understand their height. The key
observation is that a point $p$ is a branch point of $\cT$ if and
only if it is a (relative) boundary point of some tile in $\cT$ (see
Lemma~\ref{lem:bdry}~\ref{item:bdry3} and
Lemma~\ref{lem:branch}~\ref{item:branch1}). 


As we will see, there are two ways to describe (relative) boundary points of tiles: as
images of the leaves $-1$ or $1$ of $\cT$  under some map $g_w$ or as images of the branch
point $0$ of $\cT$. 
 
\begin{lemma}[Boundaries of tiles]
  \label{lem:bdry} The following statements are true: 
  \begin{enumerate}
  \item 
    \label{item:bdry1} Let $\cT_w$ with $w\in \A^n$ be a tile of level
  $n\in \N$. Then 
    $\cT_w$ has one  or two boundary points and
    $\partial \cT_w\subset  \{g_w(-1), g_w(1)\}$.   
    
     Moreover, if 
    $w=u3$ with $u\in \A^{n-1}$, then  $\partial \cT_w$ consists of a single point.  
%
%
    %
  \item
    \label{item:bdry3}
    For $n\in \N$ the set of all boundary points of all $n$-tiles
    is equal to
    \begin{equation*}
      \cV^n
      \coloneqq
      \{g_u(0) \colon u\in \A^m,\, 0\leq m\leq n-1\}. 
    \end{equation*}
  \end{enumerate} 
\end{lemma}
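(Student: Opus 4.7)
The plan is to prove both parts by induction on $n \in \N$, exploiting the self-similarity $\cT_w = g_v(\cT_a)$ for $w = va$. The base case $n = 1$ of (i) is immediate: \eqref{eq:bdry_T1} gives $\partial \cT_k = \{0\}$ for $k \in \{1,2,3\}$, while the direct computations $g_1(1) = g_2(-1) = g_3(-1) = 0$ from \eqref{eq:def_fj} place each such singleton inside $\{g_k(-1), g_k(1)\}$; in particular $\partial \cT_3 = \{g_3(-1)\}$ handles the ``moreover'' clause for the empty word $u$. The base case of (ii) then reads $\bigcup_{k=1}^{3}\partial \cT_k = \{0\} = \{g_\emptyset(0)\} = \cV^1$.

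For the inductive step of (i), write $w \in \A^{n+1}$ as $w = va$ with $v \in \A^n$ and $a \in \A$, so that $\cT_w = g_v(\cT_a) \subset g_v(\cT) = \cT_v$. Since $g_v \colon \cT \to \cT_v$ is a homeomorphism and $\partial_\cT \cT_a = \{0\}$, we obtain $\partial_{\cT_v}\cT_w = \{g_v(0)\}$, and from $0 = g_1(1) = g_2(-1) = g_3(-1)$ this point equals $g_w(1)$ when $a = 1$ and equals $g_w(-1)$ when $a \in \{2,3\}$. Because $\cT_v$ is closed in $\cT$, any $p \in \partial_\cT \cT_w$ either lies in $\inte_\cT \cT_v$ (in which case $p \in \partial_{\cT_v}\cT_w$) or lies in $\partial_\cT \cT_v$. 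The inductive hypothesis gives $\partial_\cT \cT_v \subset \{g_v(-1), g_v(1)\}$, and by Lemma~\ref{lem:1tilesCSST}~\ref{item:1tilesCSST2} the point $g_v(\pm 1)$ lies in $\cT_w = g_v(\cT_a)$ iff $\pm 1 \in \cT_a$, which happens only for $a = 1$ (for $-1$) or $a = 2$ (for $1$); in those two cases the identities $g_1(-1) = -1$ and $g_2(1) = 1$ translate $g_v(-1)$ and $g_v(1)$ into $g_w(-1)$ and $g_w(1)$, respectively. Combining, $\partial_\cT\cT_w \subset \{g_w(-1), g_w(1)\}$ has at most two elements, and is non-empty since $\cT_w$ is a proper closed subset of the connected space $\cT$ (by \eqref{eq:diasmtile}); when $a = 3$, neither of $\pm 1$ belongs to $\cT_3$, the inherited term is empty, and $\partial_\cT\cT_w = \{g_w(-1)\}$ is a singleton, giving the ``moreover'' assertion.

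For the inductive step of (ii), the preceding analysis shows $\partial_\cT\cT_{va} \subset \{g_v(0)\} \cup \partial_\cT\cT_v$ for every $w = va \in \A^{n+1}$, so by the inductive hypothesis $\bigcup_{w\in\A^{n+1}}\partial_\cT\cT_w \subset \{g_v(0): v\in\A^n\}\cup\cV^n = \cV^{n+1}$. The reverse inclusion splits into two parts: each $g_v(0)$ with $v\in\A^n$ is a branch point of $\cT_v$ where the three sub-tiles $\cT_{v1},\cT_{v2},\cT_{v3}$ pairwise meet (by \eqref{eq:1tilesint} transferred via the homeomorphism $g_v$), so $g_v(0) \in \partial_\cT\cT_{va}$ for every $a \in \A$; and each $p \in \cV^n$ lies in $\partial_\cT\cT_v$ for some $v\in\A^n$ by induction, lies in some $(n+1)$-tile $\cT_{va}$ by \eqref{eq:wunion}, and remains a boundary point because any $\cT$-neighborhood of $p$ escaping $\cT_v$ automatically escapes $\cT_{va} \subset \cT_v$.

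The main technical obstacle is the case analysis distinguishing when each of $g_w(\pm 1)$ actually belongs to $\partial_\cT\cT_w$ and whether the inherited boundary of $\cT_v$ contributes additional points inside $\cT_w$; Lemma~\ref{lem:1tilesCSST}~\ref{item:1tilesCSST2}, which pins down the unique $1$-tile containing each of $\pm 1$, is the crucial input that makes this bookkeeping tractable.
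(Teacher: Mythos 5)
Your proof is correct and follows essentially the same route as the paper: induction on the level, splitting $\partial\cT_w$ for $w=va$ into the ``new'' point $g_v(0)$ (coming from the relative boundary of $\cT_w$ in $\cT_v$) and the points inherited from $\partial\cT_v$, with Lemma~\ref{lem:1tilesCSST}~\ref{item:1tilesCSST2} governing which of $g_v(\pm1)$ can actually lie in $\cT_w$. The only cosmetic differences are that you phrase the new boundary point via the relative boundary and get non-emptiness from connectedness of $\cT$, whereas the paper exhibits $g_v(0)$ directly as a boundary point; both are fine.
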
 
  
\begin{proof}   \ref{item:bdry1}
  We prove this by induction on $n=\ell(w)\in \N$. 
 
 For  $n=1$ the statement follows from \eqref{eq:bdry_T1} and 
  \begin{equation}
    \label{eq:gk12}
    0=g_1(1)= g_2(-1)= g_3(-1).
  \end{equation}

  Suppose the statement is true for all words of length
  $ n\in \N$, and  let $w\in \A^{n+1}$ be arbitrary. Then $w=uk$ with
  $u\in \A^n$ and $k\in \A=\{1,2,3\}$.
The map   $g_u$ is a homeomorphism that
  sends $\cT$ onto $\cT_u$ and the $1$-tile $\cT_\ell$ onto the
  $(n+1)$-tile $\cT_{u\ell}$ for $\ell=1,2,3$. It follows from
  \eqref{eq:bdry_T1}, \eqref{eq:1tilesint}, and \eqref{eq:wunion} that the sets 
 $\cT_{u\ell}\setminus \{g_u(0)\}$, $\ell \in \{1,2,3\}$,  are pairwise disjoint, but that every neighborhood of $g_u(0)$ contains points from each of these sets. This implies that $g_u(0)$ is a boundary point of  
$ \cT_{u\ell}$ for $\ell=1,2,3$. 
 In particular,  $g_u(0)\in  \partial \cT_w=\partial \cT_{uk}$, and so 
$ \partial \cT_w$  contains at least  one point.
  
  Let $p\in \partial \cT_w$ be arbitrary. Then 
  $p\in \cT_w\sub \cT_u$ is contained either  in the (relative) interior of $\cT_u$ or in 
  $\partial \cT_u$. In the first case, each sufficiently small
  (relative) neighborhood $U$ of $p$ is contained in
  $ \cT_u= \cT_{u1}\cup \cT_{u2} \cup  \cT_{u3}$;   
so $U$ must contain a point in  $\cT_u\setminus \cT_w$, because 
 $p\in \partial \cT_w$. Applying the homeomorphism $g_u^{-1}$, we see that every neighborhood of $g_u^{-1}(p)\in \cT_k$ contains points from the set $\cT\setminus\cT_k$, and so $g_u^{-1}(p)\in \partial \cT_k$. Hence  $g_u^{-1}(p)=0$ by \eqref{eq:bdry_T1}, and so $p=g_u(0)$. 
  
   In
  the second case, 
  $p \in \partial \cT_u \subset \{ g_u(-1),g_u(1)\}$, since
  \ref{item:bdry1} is true for $\cT_u$ by induction hypothesis. 
  Lemma~\ref{lem:1tilesCSST}~\ref{item:1tilesCSST2} shows that
  each $1$-tile contains at most one of the points $-1$ and $1$; namely, $\cT_1$  contains only $-1$, $\cT_2$ contains only
  $1$, and $\cT_3$ contains neither $-1$ nor $1$.  
 We conclude
  that $\cT_w= g_u(\cT_k)$ contains at most one of the points
  $g_u(-1)$ and $g_u(1)$ (in its boundary). So  $\partial \cT_w$
 consists of at most two points, namely the point $g_u(0)$ and at most one of the points $g_u(-1)$ and $g_u(1)$. 
%
%
%
 More precisely,  our discussion shows that we have the
 following alternatives for $k\in \{1,2,3\}$.  
 
 \smallskip 
If $k=1$, then  we have 
 $\partial \cT_w=\partial  \cT_{u1}\sub \{g_u(-1), g_u(0)\}. $
 Since 
 $$  g_u(-1) = g_u(g_1(-1)) = g_w(-1)  \text{ and } g_u(0) = g_u(g_1(1)) = g_w(1),$$ 
 it follows that 
 $\partial \cT_w=\partial  \cT_{u1}\sub \{g_u(-1), g_u(0)\}=\{g_w(-1), g_w(1)\}$. 
 
 \smallskip
If $k=2$, then  
 $\partial \cT_w=\partial  \cT_{u2}\sub \{g_u(0), g_u(1)\}. $
 Now 
 $$ g_u(0) = g_u(g_2(-1)) = g_w(-1)    \text{ and }
    g_u(1) = g_u(g_2(1)) = g_w(1), $$ 
    which implies 
 $\partial \cT_w=\partial \cT_{u2}\sub \{g_u(0), g_u(1)\}=\{g_w(-1), g_w(1)\}$.

\smallskip
If   $k=3$, then 
 $\partial \cT_w=\partial  \cT_{u3}=\{g_u(0)\}. $
In particular,  $ \cT_w$ has precisely one boundary point in this case. Moreover,  we have
$$ g_u(0) = g_u(g_3(-1)) = g_w(-1), $$
and so $\partial \cT_w=\partial \cT_{u3}=\{g_u(0)\}=\{g_w(-1)\}$. 

\smallskip
This shows that the statement is true for the tile $\cT_w$. This completes the inductive step and \ref{item:bdry1} follows. 
  
   

  \smallskip
  \ref{item:bdry3} For $n\in \N$ 
we denote by 
   $$B^n\coloneqq\bigcup_ {w\in \A^n} \partial \cT_w$$
  the set of all boundary points of all $n$-tiles. We have to show that 
  $B^n=\cV^n$. We prove this by induction on $n\in \N$.

 Let $n=1$. Then   \eqref{eq:bdry_T1} 
 shows that $ B^1=\{0\}=\{g_\emptyset(0)\} =\cV^1$ as desired.  
   
  Suppose the statement is true for some $n\in \N$. To show that it is also true for $n+1$, we first verify  that   $\cV^{n+1}\subset B^{n+1}$. To this end, let
  $p\in \cV^{n+1}$ be arbitrary. Then $p=g_u(0)$ for some $u\in \A^m$
  with $0\leq m\leq n$. If $m=n$, then $p$
  is in the boundary of the $(n+1)$-tiles
  $\cT_{u1}, \cT_{u2}, \cT_{u3}$, as have seen in the proof of  \ref{item:bdry1}. So $p\in B^{n+1}$.
  
  If $m\leq n-1$, we have
  $p\in \cV^{n}$, and so  $p\in \partial \cT_w\sub \cT_w$ for some 
  $w\in \A^n$  by induction  hypothesis.  
  By \eqref{eq:wunion} there exists $k\in \{1,2,3\}$ such that 
  $p\in \cT_{wk}\subset \cT_w$. Then  every neighborhood of $p$ contains points in 
  $\cT\setminus \cT_w\subset \cT\setminus \cT_{wk}$. This implies  that  $p\in \partial \cT_{wk}$, and so  
  $p\in B^{n+1}$ also in this case. Thus
  $\cV^{n+1}\subset
  B^{n+1}$. 

  To see the reverse inclusion $B^{n+1}\subset \cV^{n+1}$, let
  $p\in B^{n+1}$ be arbitrary. Then $p$ is a boundary point of an $(n+1)$-tile, say $p\in \partial\cT_{uk}$, where 
  $u\in \A^n$ and $k\in \{1,2,3\}$. In the proof of  \ref{item:bdry1}
  we have seen that
  either $p=g_u(0)$ or $p\in \partial \cT_u$. In the first case,
  $p\in \cV^{n+1}$, while in the second case,
  $p\in \cV^{n}\subset \cV^{n+1}$ by induction hypothesis. This
shows  $B^{n+1}\subset \cV^{n+1}$, and
so $B^{n+1}=\cV^{n+1}$.

This completes the inductive step and the statement follows. 
\end{proof} 

For $n\in \N$ let $\cV^n$ be as in Lemma~\ref{lem:bdry}~\ref{item:bdry3},
and define 
\begin{equation*}
  \cV^*
  \coloneqq
  \bigcup_{n\in \N} \cV^n
  =
  \{g_w(0) : w\in \A^*\}. 
\end{equation*}
Then $\cV^*=\bigcup_{X\in \cX^*} \partial X$
 (note that for the only $0$-tile $X=\cT$, we have $\partial X=\emptyset$). So $\cV^*$ 
 is equal to the set of all boundary points of all tiles of $\cT$.

\begin{lemma}
  [Branch points of $\cT$]
  \label{lem:branch}
The following statements are true: 
  \begin{enumerate}
  \item
  \label{item:branch1}
  The set $\cV^*$ is equal to the set of all branch points of $\cT$.

\item 
  \label{item:branch2} The points $g_w(0)$, $w\in \A^*$, are all distinct. More
  precisely, if $v,w\in \A^*$, $v\ne w$, then
  \begin{equation*}
    \abs{g_v(0)-g_w(0)}
    \gtrsim
    \min \{ 2^{-\ell(v)}, 2^{-\ell(w)}\}>0 
  \end{equation*}
  with $C(\gtrsim)$ independent of $v$ and $w$.

\item 
  \label{item:branch3}  
  If $w\in \A^*$ and $B_1$, $B_2$, $B_3$ are the three branches
  of the branch point $p=g_w(0)$ of $\cT$, then with suitable
  labeling we have
  \begin{equation*}
    g_{w1}(\cT)
    \sub
    B_1,\,  g_{w2}(\cT)\sub B_2,\, g_{w3}(\cT)=B_3.
  \end{equation*}
  In particular, $H_{\cT}(p)=2^{-\ell(w)}$. 
\end{enumerate} 
\end{lemma}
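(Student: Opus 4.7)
The plan is to handle (iii) first, bootstrap (i) from it via a pigeonhole argument on tile boundaries, and then prove (ii) by reducing to a single bounded-turning estimate.

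For (iii), fix $w\in\A^*$ with $n=\ell(w)$ and set $p=g_w(0)$. The map $g_w$ is a Euclidean similarity, hence a homeomorphism of $\cT$ onto $\cT_w$ sending the three branches $\cT_1,\cT_2,\cT_3$ of $0$ in $\cT$ (Lemma~\ref{lem:1tilesCSST}~\ref{item:1tilesCSST1}) onto $\cT_{w1},\cT_{w2},\cT_{w3}$; these are therefore the branches of $p$ in the subtree $\cT_w$. Lemma~\ref{lem:bdry}~\ref{item:bdry1} gives $\partial\cT_w\sub\{g_w(-1),g_w(1)\}$, so $p\in\inte(\cT_w)$, and Lemma~\ref{lem:subt}~\ref{item:subt0} produces three distinct branches $B_1,B_2,B_3$ of $p$ in $\cT$ with $\cT_{wk}\sub B_k$. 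To identify $B_3$ exactly, observe that $\cT_3$ contains neither $-1$ nor $1$ by Lemma~\ref{lem:1tilesCSST}~\ref{item:1tilesCSST2}, so $\cT_{w3}\cap\partial\cT_w=\emptyset$; Lemma~\ref{lem:subt}~\ref{item:subt2} then upgrades $\cT_{w3}$ to a branch of $p$ in $\cT$, forcing $B_3=\cT_{w3}$. Since $\diam(\cT_{wk})=2^{-n}$ by~\eqref{eq:diasmtile}, we obtain $\diam(B_3)=2^{-n}$ and $\diam(B_k)\ge 2^{-n}$ for $k=1,2$, yielding $H_{\cT}(p)=2^{-n}$.

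The trickiest step will be the converse direction of (i); the forward direction is an immediate consequence of (iii). Suppose for contradiction that $p$ is a branch point of $\cT$ not in $\cV^*$. By Lemma~\ref{lem:bdry}~\ref{item:bdry3}, $p$ avoids the boundary of every tile, so for each $n\in\N$ it lies in the interior of a unique $n$-tile $X_n$. Lemma~\ref{lem:subt}~\ref{item:subt0} supplies three branches $B_1^{(n)},B_2^{(n)},B_3^{(n)}$ of $p$ in $X_n$, each contained in the corresponding branch $B_k$ of $p$ in $\cT$. Because $\diam(X_n)=2^{-n+1}\to 0$ while $\diam(B_k)\ge H_\cT(p)>0$, for $n$ sufficiently large each containment is strict, and the contrapositive of Lemma~\ref{lem:subt}~\ref{item:subt2} yields $B_k^{(n)}\cap\partial X_n\ne\emptyset$ for every $k\in\{1,2,3\}$. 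But $\#\partial X_n\le 2$ by Lemma~\ref{lem:bdry}~\ref{item:bdry1}, so two of these branches share a point $q\in\partial X_n$; since distinct branches of $p$ meet only at $p$ while $p\in\inte(X_n)$ is disjoint from $\partial X_n$, this is the desired contradiction.

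For (ii), let $u$ be the longest common prefix of $v$ and $w$ and write $v=uv'$, $w=uw'$. Since $g_u$ scales distances by $2^{-\ell(u)}$, it suffices to bound $\abs{g_{v'}(0)-g_{w'}(0)}$ from below by $\min\{2^{-\ell(v')},2^{-\ell(w')}\}$ when $v'$ and $w'$ share no initial letter. The key sub-case is $v'=\emptyset$: the arc $[0,g_{w'}(0)]$ in $\cT$ must meet $\partial\cT_{w'}$ (either because the arc crosses in, or because $0$ itself lies in $\partial\cT_{w'}$), and any such meeting point $q$ satisfies $g_{w'}^{-1}(q)\in\{-1,1\}$ by Lemma~\ref{lem:bdry}~\ref{item:bdry1}, giving $\abs{g_{w'}(0)-q}=2^{-\ell(w')}$; bounded turning~\eqref{eq:cT_geod} will then force $\abs{g_{w'}(0)}\ge 2^{-\ell(w')}/L$. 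When both $v'$ and $w'$ are nonempty with distinct first letters, $g_{v'}(0)$ and $g_{w'}(0)$ sit in distinct $1$-tiles meeting only at $0$, so $0$ lies on the connecting arc; bounded turning will then yield $\abs{g_{v'}(0)-g_{w'}(0)}\ge\max\{\abs{g_{v'}(0)},\abs{g_{w'}(0)}\}/L$, and the previous sub-case supplies the required lower bounds on $\abs{g_{v'}(0)}$ and $\abs{g_{w'}(0)}$. Restoring the factor $2^{-\ell(u)}$ gives the claimed estimate.
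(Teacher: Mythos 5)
Your proposal is correct. Part \ref{item:branch3} follows the paper's argument essentially verbatim (branches of $g_w(0)$ in $\cT_w$ are $\cT_{w1},\cT_{w2},\cT_{w3}$; Lemma~\ref{lem:subt}~\ref{item:subt0} lifts them to branches in $\cT$; Lemma~\ref{lem:subt}~\ref{item:subt2} identifies $\cT_{w3}$ as an exact branch). For part \ref{item:branch1} you diverge in a substantive way: the paper simply cites \cite[Lemma 4.7]{BT}, whereas you give a self-contained proof of the converse inclusion via a pigeonhole on $\partial X_n$ for a shrinking nest of tiles containing a putative branch point $p\notin\cV^*$. This argument is sound --- the key points ($p\in\inte(X_n)$ since $\partial X_n\sub\cV^n$, the bijection of branches from Lemma~\ref{lem:subt}~\ref{item:subt0}, strictness of $B_k^{(n)}\subsetneq B_k$ for large $n$ from $\diam(X_n)\to 0$ versus $\diam(B_k)\ge H_\cT(p)>0$, and the contrapositive of Lemma~\ref{lem:subt}~\ref{item:subt2}) all check out, and it is arguably a gain over the paper since it removes the external dependency. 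For part \ref{item:branch2} the overall reduction (longest common prefix, two sub-cases) matches the paper, but in the crucial sub-case you replace the paper's explicit telescoping computation $\abs{g_{a_1\dots a_j}(0)-g_{a_1\dots a_{j+1}}(0)}=2^{-j-1}$, which yields the sharp bound $\abs{g_u(0)}\ge 2^{-\ell(u)}$ with constant $1$, by a boundary-crossing argument combined with quasi-convexity \eqref{eq:cT_geod}, which yields $\abs{g_{w'}(0)}\ge 2^{-\ell(w')}/L$. Your version costs an extra factor of $L$ (harmless here) but is more robust, as it uses only the tile structure and bounded turning rather than the explicit coordinates of the maps $g_k$. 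One small point worth making explicit: in the sub-case $v'=\emptyset$ you should note that $0\notin\inte(\cT_{w'})$ for nonempty $w'$ (because $0\in\partial\cT_k$ for the $1$-tile containing $\cT_{w'}$), so that the arc $[0,g_{w'}(0)]$ genuinely must meet $\partial\cT_{w'}$; your parenthetical dichotomy presupposes this.
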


\begin{proof}
\ref{item:branch1}
This was proved in \cite[Lemma 4.7]{BT}.

  \smallskip
\ref{item:branch2}
We have
  $g_1(0)=-\tfrac{1}{2}$,
  $g_2(0)=\tfrac{1}{2}$, 
  $g_3(0)=\tfrac{i}{2}$,
 and so
$\abs{g_k(0)}= \tfrac{1}{2}$ 
for $k\in \A=\{1,2,3\}$. It follows that  if 
$u\in \A^*$ and $k\in \A$ are  arbitrary, then  
\begin{align*}
  \notag
  \abs{g_u(0)-g_{uk}(0)}
  &=
    \abs{g_u(0)-g_u(g_k(0))}
    =
    2^{-\ell(u)} \abs{g_k(0)}
  \\
  &=
    2^{-\ell (u)-1}. 
\end{align*}
So  if $u=a_1\dots a_n\in \A^n$ with $n\in \N$, then 
\begin{align}
  \label{eq:dist0}
  \abs{g_{u}(0)}
  &\ge
    \abs{g_{a_1}(0)}- \abs{g_{a_1}(0)-g_{a_1\dots a_n}(0)}
  \\ \notag
  &\ge 
    \frac12- \sum_{j=1}^{n-1} \abs{g _{a_1\dots a_j}(0)
    -g_{a_1\dots a_ja_{j+1}}(0)} 
  \\ \notag
  &=
    \frac12- \sum_{j=1}^{n-1} 2^{-j-1}= 2^{-n}=2^{-\ell(u)} .
\end{align}

Now let $v,w\in \A^*$ with $v\ne w$ be arbitrary, and let
$u\in \A^*$ be the longest common initial word of $v$ and $w$. Then $v=uv'$ and $w=uw'$ with
$v',w'\in \A^*$, where either $v'$ or $w'$ is the empty word, or both 
  $v'$ and $w'$ are non-empty, but have a different initial letter.
 Accordingly,  we  consider two cases.
  
\smallskip 
{\em Case 1:}
$v'=\emptyset$ or $w'=\emptyset$.
We may assume $w'=\emptyset$. Then $v'\ne\emptyset$, and so
$\ell(v)\ge \ell (w)$. Moreover, by \eqref{eq:dist0} we have
\begin{align*}
  \abs{g_v(0)-g_w(0)}
  &=
    \abs{g_u(g_{v'}(0))-g_u(g_{w'}(0))}
  \\
  &=
    2^{-\ell(u)}\abs{g_{v'}(0)}\ge 2^{-\ell(u)-\ell(v')}
  \\
  &=
    2^{-\ell(v)}
    =
    \min \{ 2^{-\ell(v)}, 2^{-\ell(w)}\}.
\end{align*}  

 \smallskip 
{\em Case 2}:   $v',w'\ne \emptyset$. 
Then $v'$ and $w'$ necessarily start with different letters, say
$v'=kv''$ and $w'=\ell w''$ with $k,\ell \in \A=\{1,2,3\}$, $k\ne \ell$, and
$v'',w''\in \A^*$. Then $g_{v'}(0)\in \cT_k$ and
$g_{w'}(0)\in \cT_\ell$, i.e., these points are contained in
distinct $1$-tiles. Then
Lemma~\ref{lem:1tilesCSST}~\ref{item:1tilesCSST3}  and \eqref{eq:dist0} imply that 
\begin{align*}
  \abs{g_{v'}(0)-g_{w'}(0)}
  & \gtrsim
    \abs{g_{v'}(0)}+\abs{g_{w'}(0)}
  \\
  &\ge
    2^{-\ell(v')}+2^{-\ell(w')}
    \ge  
    \min \{ 2^{-\ell(v')}, 2^{-\ell(w')}\}
\end{align*}  
with $C(\gtrsim)$ independent of $v$ and $w$.
We conclude that 
\begin{align*}
  \abs{g_v(0)-g_w(0)}
  &=
    \abs{g_u(g_{v'}(0))-g_u(g_{w'}(0))}
  \\
  &=
    2^{-\ell(u)}\abs{g_{v'}(0)-g_{w'}(0)}
  \\ 
  &\gtrsim   2^{-\ell(u)} \min \{ 2^{-\ell(v')}, 2^{-\ell(w')}\}
  \\
  &=
    \min \{ 2^{-\ell(v)}, 2^{-\ell(w)}\}.
 \end{align*}  
The statement follows.

\smallskip
\ref{item:branch3}
If $w\in \A^*$, then $p=g_w(0)$ is a branch point of $\cT_w$ with
the three (distinct) branches $\cT_{w1}$, $\cT_{w2}$, $\cT_{w3}$.
Hence there exist unique distinct branches $B_1$, $B_2$, $B_3$ of $g_w(0)$
in $\cT$ such that $\cT_{wk}\sub B_k$ for $k=1,2,3$ 
(see Lemma~\ref{lem:subt}~\ref{item:subt0}).  By 
Lemma~\ref{lem:bdry}~\ref{item:bdry1} we have $\partial \cT_{w}\sub \{g_w(-1), g_w(1)\}$. Moreover, 
$g_w(-1), g_w(1)\notin \cT_{w3}$, since $-1,1\notin
\cT_3$. Thus $\cT_{w3}\cap \partial \cT_{w}=\emptyset$. Hence it
follows from Lemma~\ref{lem:subt}~\ref{item:subt2} that
$\cT_{w3}$ is actually a branch of $p=g_w(0)$ in $\cT$, and so
$\cT_{w3}=B_3$. The first part of the statement follows.

For the second statement note that
\begin{equation*}
  \diam(B_k)
  \ge
  \diam (\cT_{wk})
  =
  2^{-\ell(w)}
\end{equation*}
for $k=1,2$, while
\begin{equation*}
  \diam(B_3)
  =
  \diam (\cT_{w3})
  =
  2^{-\ell(w)}.
\end{equation*}
It follows that
\begin{equation*}
  H_{\cT}(p)
  =
  \min\{\diam(B_k): k\in \{1,2,3\}\}
  =
  2^{-\ell(w)},
\end{equation*}
as desired. 
\end{proof}

\begin{proposition}
  \label{prop:CSST_unif}
  The continuous self-similar tree $\cT$ is uniformly
  bran\-ching, i.e., its branch points are uniformly relatively
  separated and uniformly relatively dense.
\end{proposition}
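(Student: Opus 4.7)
The plan splits into verifying the two conditions in Definition~\ref{def:unif_branching}, exploiting in each case the explicit parametrization of the branch points of $\cT$ as the points $g_w(0)$, $w\in\A^*$, with $H_\cT(g_w(0))=2^{-\ell(w)}$.

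Uniform relative separation is essentially already in hand. By Lemma~\ref{lem:branch}~\ref{item:branch1} and~\ref{item:branch3}, every branch point of $\cT$ is of the form $p=g_w(0)$ with $H_\cT(p)=2^{-\ell(w)}$, and Lemma~\ref{lem:branch}~\ref{item:branch2} directly provides
$$|g_v(0)-g_w(0)|\gtrsim \min\{2^{-\ell(v)},2^{-\ell(w)}\}=\min\{H_\cT(g_v(0)),H_\cT(g_w(0))\}$$
for distinct $v,w\in\A^*$. Little more than citing this is needed.

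For uniform relative density the argument is more substantive. Given distinct $x,y\in\cT$, I would let $n_0\ge 1$ be the smallest $n$ such that no $n$-tile of $\cT$ contains both $x$ and $y$; this is well defined since $\cT$ is the unique $0$-tile and by~\eqref{eq:diasmtile} the diameters of $n$-tiles tend to zero. I would then pick any $(n_0-1)$-tile $\cT_w$ containing both $x$ and $y$, so $\ell(w)=n_0-1$. Decomposing $\cT_w=\cT_{w1}\cup\cT_{w2}\cup\cT_{w3}$ via~\eqref{eq:wunion}, the points $x$ and $y$ fall into two \emph{distinct} children $\cT_{wk}$ and $\cT_{w\ell}$ with $k\ne\ell$; otherwise a single $n_0$-tile $\cT_{wk}$ would contain both, contradicting the minimality of $n_0$. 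Setting $p\coloneqq g_w(0)$, Lemma~\ref{lem:branch}~\ref{item:branch3} tells us that $\cT_{wk}$ and $\cT_{w\ell}$ lie in distinct branches of $p$ in the \emph{ambient} tree $\cT$, so $p$ separates $x$ and $y$ and therefore lies on the arc $[x,y]$ by Lemma~\ref{lem:top_T}~\ref{item:top_T1}. The height comparison
$$H_\cT(p)=2^{-\ell(w)}=\tfrac12\diam(\cT_w)\ge\tfrac12|x-y|$$
then finishes the proof with an explicit constant $1/2$.

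No step in this plan is particularly deep. The main bookkeeping point to be careful about is the passage from ``distinct children of $\cT_w$, viewed as a tree in its own right'' to ``distinct branches of $p$ in the ambient tree $\cT$'', which is exactly what Lemma~\ref{lem:branch}~\ref{item:branch3} supplies; without this input one could not conclude $p\in[x,y]$. The choice of the $(n_0-1)$-tile $\cT_w$ is immaterial if several such tiles exist (for instance if $x$ or $y$ happens to be a boundary point of tiles), since the argument applies verbatim to any of them.
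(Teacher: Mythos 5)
Your proposal is correct. The separation half is identical to the paper's: both simply combine parts \ref{item:branch1}, \ref{item:branch2}, and \ref{item:branch3} of Lemma~\ref{lem:branch}. For density the two arguments share the same ingredients but locate the branch point differently. The paper fixes the unique $n$ with $2^{-n+2}\ge |x-y|>2^{-n+1}$, takes an $n$-tile $\cT_w$ containing $x$ (which then cannot contain $y$ since $\diam(\cT_w)=2^{-n+1}<|x-y|$), and obtains $p$ as the last point of $[x,y]$ in $\cT_w$, i.e., as a boundary point of $\cT_w$, which is of the form $g_v(0)$ with $\ell(v)\le n-1$; this yields $H_\cT(p)\ge 2^{-n}\asymp|x-y|$. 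You instead take the smallest level $n_0$ at which $x$ and $y$ are separated by the tile decomposition and use the center $p=g_w(0)$ of the common parent $(n_0-1)$-tile; since $x,y\in\cT_w$, this gives the cleaner bound $H_\cT(p)=\tfrac12\diam(\cT_w)\ge\tfrac12|x-y|$ with explicit constant. Your version trades the paper's ``last exit point'' argument for the separation statement in Lemma~\ref{lem:branch}~\ref{item:branch3} combined with Lemma~\ref{lem:top_T}~\ref{item:top_T1}, and it correctly flags the one place where care is needed (children of $\cT_w$ versus branches of $p$ in the ambient tree $\cT$). The only loose end is the degenerate case $x=p$ or $y=p$, where the separation argument via Lemma~\ref{lem:top_T}~\ref{item:top_T1} does not literally apply but $p\in[x,y]$ holds trivially; this is cosmetic, not a gap.
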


\begin{proof}
  Let $p,q\in \cT$ be two distinct branch points of $\cT$. Then
  $p=g_v(0)$ and $q=g_w(0)$ with $v,w\in \A^*$, $v\ne w$, by
  Lemma~\ref{lem:branch}~\ref{item:branch1}. Moreover,
  we have
  \begin{align*}
    \abs{p-q}
    &=
      \abs{g_v(0)-g_w(0)}
    \\
    & \gtrsim
      \min\{ 2^{-\ell(v)}, 2^{-\ell(w)}\}
      && \text{ by Lemma~\ref{lem:branch}~\ref{item:branch2}}
    \\
    &=
      \min\{ H_{\cT}(g_v(0)), H_{\cT}(g_w(0))\}
      && \text{ by  Lemma~\ref{lem:branch}~\ref{item:branch3}}
    \\
    &= \min\{ H_{\cT}(p), H_{\cT}(q)\}. 
  \end{align*}
  Here the implicit constant is independent of $p$ and $q$. This
  shows that $\cT$ has branch points that are uniformly
  relatively separated.

  To establish the other property, let $x,y\in \cT$, $x\ne y$, be
  arbitrary. Since $\diam(\cT)=2$, and so $2\ge |x-y|>0$, there
  exists a unique number $n\in \N$ such that
  $2^{-n+2}\ge |x-y|>2^{-n+1}. $ Then we can find $w\in \A^*$ with
  $\ell(w)=n$ such that $x\in \cT_w$. Since
  \begin{equation*}
    \diam (\cT_w)
    =
    \diam (g_w(\cT))
    =
    2^{-n+1}<\abs{x-y},  
  \end{equation*}
  we then have $y\not \in \cT_w$. So as we travel from $x$ to $y$
  along the arc $[x,y]$ that joins $x$ and $y$ in $\cT$, there is
  a last point $p\in [x,y]\cap \cT_w$. Then $(p, y]$ is non-empty
  and disjoint from $\cT_w$, and so $p\in \partial \cT_w$. By
  Lemma~\ref{lem:bdry}~\ref{item:bdry3} we have
  $p\in \cV^n\subset \cV^*$, and so $p$ is a branch point of $\cT$ by
  Lemma~\ref{lem:branch}~\ref{item:branch1}.

  Since $p\in \cV^n$, we know that $p=g_v(0)$ for some
  $v\in \A^*$ with $0\leq \ell(v) \leq n-1$. Now 
  Lemma~\ref{lem:branch}~\ref{item:branch3} implies that 
  \begin{equation*} 
    H_{\cT}(p)
    =
    H_{\cT}(g_v(0))
    =
    2^{-\ell (v)}
    \ge
    2^{-\ell (w)}
    =
    2^{-n}
    \asymp \abs{x-y}.
  \end{equation*}
  Again the implicit multiplicative constant is independent of
  the choices here. This shows  that the branch points of $\cT$
  are also uniformly relatively dense. The statement follows.
\end{proof}

Note  that Proposition~\ref{prop:CSST_unif} together
with Lemma~\ref{lem:unif_sepa_qs} and
Lemma~\ref{lem:unif_dense_qs} essentially shows the ``only if''
implication in Theorem~\ref{thm:CSST_qs}. 

\subsection*{Miscellaneous results on tiles in
  \texorpdfstring{$\cT$}{T}}
\label{sec:some-misc-results}
We conclude this section with some auxiliary results, which will be
used in the proof of the other implication of
Theorem~\ref{thm:CSST_qs} (namely in the proof
of Lemma~\ref{lem:decomp}). 

Let $\cV\subset \cT$
be a finite set of branch points of $\cT$. Recall from
Section~\ref{sec:topology-trees} that $\cV$ then decomposes $\cT$
into a set of tiles $\cX$.
Note that such a tile $X\in \cX$ is
not necessarily in $\cX^*$, i.e., a tile of $\cT$ as
defined in \eqref{eq:def_Tw}.
For
example, if  $\cV=\{-1/2\}$, then the decomposition $\cX$
induced by $\cV$ consists of the  sets  $X_1=\cT_{11}$,
$X_2= \cT_{12}\cup \cT_2\cup \cT_3$, and  $X_3=\cT_{13}$.
 Here $X_2$ is not a tile of $\cT$.  This can be seen from  Figure~\ref{fig:2tiles} which shows the relevant $2$-tiles.

\begin{figure}[h]
 \begin{overpic}[ scale=0.7
 , clip=true, trim=0mm 15mm 0mm 0mm
    ]{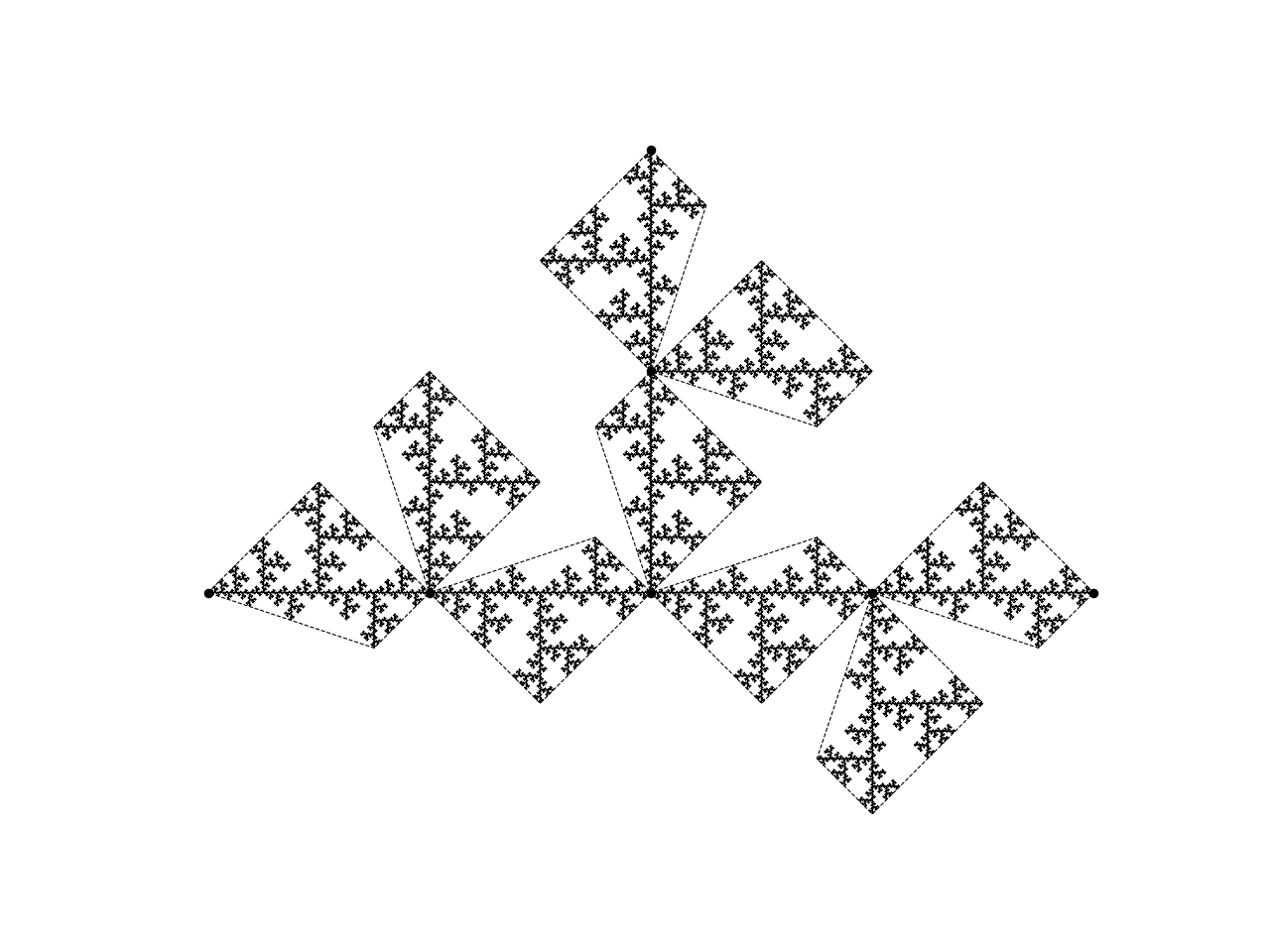}
    \put(12,15.5){ $-1$}
     \put(85,15.5){ $1$}
          \put(12, 23){ $\cT_{11}$}
           \put(45, 10){ $\cT_{12}$}
               \put(22, 33){ $\cT_{13}$}
               \put(82, 23){ $\cT_{22}$}
   \put(29.5, 13.5){ $-\tfrac12$}
     \put(66.5, 23){ $\tfrac12$}
                          \end{overpic}
\caption{The CSST $\cT$ and some of its $2$-tiles.}
\label{fig:2tiles}
\end{figure}

We now consider the special case when such a decomposition
of $\cT$ 
produces tiles as in  \eqref{eq:defcX}.

\begin{lemma}
  \label{lem:admh}
  Let $\cV\sub \cT$ be a finite set of branch points of $\cT$,
  and let $\cX$ be the set of tiles in the decomposition of $\cT$
  induced by $\cV$.
  
 Suppose that   $\cX\sub \cX^*$. Then the levels of  tiles in $\cX$
  satisfy
  \begin{equation*}
    \text{$\ell (X)\le \#\cV$ for each $X\in \cX$.}  
  \end{equation*}  
  If, in addition, 
  $\cV\neq \emptyset$, then we have $\ell(X) \geq 1$ for each
  $X\in \cX$.
\end{lemma}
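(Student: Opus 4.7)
The plan is to handle the two inequalities separately. For the lower bound, I will suppose $\cV \neq \emptyset$ and pick $v \in \cV$. Since $v$ is a branch point of $\cT$, the set $\cT \setminus \{v\}$ has at least three components, each a proper subset of $\cT$. Any component of $\cT \setminus \cV \sub \cT \setminus \{v\}$ lies in one of them, so its closure is a proper subtree of $\cT$. Hence no tile in $\cX$ equals $\cT = \cT_\emptyset$, forcing $\ell(X) \ge 1$ for every $X \in \cX$.

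For the upper bound, fix $X = \cT_w \in \cX$ with $w = w_1 \dots w_n$ and $n = \ell(X) \ge 1$ (the case $n = 0$ is trivial). I will exhibit $n$ distinct points of $\cV$, namely $p_j \coloneqq g_{w_1 \dots w_j}(0)$ for $j = 0, 1, \dots, n-1$; these are distinct branch points of $\cT$ by Lemma~\ref{lem:branch}~\ref{item:branch1} and \ref{item:branch2}, so it will suffice to show $p_j \in \cV$. Writing $A_j \coloneqq \cT_{w_1 \dots w_j}$, Lemma~\ref{lem:bdry}~\ref{item:bdry1} gives $\partial A_j \sub \{g_{w_1 \dots w_j}(\pm 1)\}$, while $p_j = g_{w_1 \dots w_j}(0)$ differs from both, so $p_j \in \inte(A_j)$. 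Hence a sufficiently small $\cT$-neighborhood of $p_j$ lies inside $A_j$, and since $A_j = \cT_{w_1 \dots w_j 1} \cup \cT_{w_1 \dots w_j 2} \cup \cT_{w_1 \dots w_j 3}$ by \eqref{eq:wunion}, with these three sub-tiles meeting only at $p_j$, such a neighborhood intersects each $\cT_{w_1 \dots w_j k}$ in more than one point.

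The key technical step is a \emph{nested tile property}: any two tiles $\cT_a, \cT_b \in \cX^*$ are either nested or meet in at most one point. To see this, let $c$ be the longest common prefix of $a, b$; if $c$ coincides with the shorter word, nesting follows from $\cT_b = g_a(\cT_{b'}) \sub \cT_a$ (assuming $a$ shorter), and otherwise $\cT_a \sub g_c(\cT_k)$ and $\cT_b \sub g_c(\cT_l)$ with $k \neq l$, yielding $|\cT_a \cap \cT_b| \le |g_c(\cT_k \cap \cT_l)| = 1$ by \eqref{eq:1tilesint}. Now assume for contradiction that $p_j \notin \cV$. Then $p_j \in \inte(Y)$ for some $Y = \cT_u \in \cX$, so $Y$ contains a $\cT$-neighborhood of $p_j$; applying the nested tile property against each $\cT_{w_1 \dots w_j k}$, and ruling out $Y \sub \cT_{w_1 \dots w_j k_0}$ (since $Y$ also enters the other two sub-branches, which meet $\cT_{w_1 \dots w_j k_0}$ only at $p_j$), one obtains $Y \supset \cT_{w_1 \dots w_j k}$ for all $k\in\{1,2,3\}$. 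Therefore $u$ is a common prefix of $w_1 \dots w_j 1$, $w_1 \dots w_j 2$, and $w_1 \dots w_j 3$, forcing $u = w_1 \dots w_k$ with $k \le j$.

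This produces a strict inclusion $X = \cT_w \subsetneq Y = \cT_{w_1 \dots w_k}$ with both in $\cX$. I will finish by noting that $\partial Y \sub \cV$ and $\inte(X) \cap \cV = \emptyset$ together force $\inte(X) \sub Y \setminus \partial Y = \inte(Y)$; but the interiors of distinct tiles in $\cX$ are distinct components of $\cT \setminus \cV$, hence disjoint, so $\inte(X) = \emptyset$, contradicting the fact that $X$ has infinitely many points while $\cV$ is finite. The hard part will be the nested tile property together with the geometric observation that a tile $Y$ containing $p_j$ in its relative interior must actually cover a neighborhood of $p_j$ in each of the three sub-branches; the remainder is combinatorial bookkeeping of prefixes.
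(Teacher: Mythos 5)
Your proof is correct, but it takes a genuinely different route from the paper. The paper argues by induction on $\#\cV$: it first shows $0\in \cV$, splits $\cT$ at $0$ into the branches $\cT_1,\cT_2,\cT_3$, pulls each branch back to $\cT$ via $g_k^{-1}$ (which drops all tile levels by one), and applies the induction hypothesis to the smaller vertex sets $\cV_k$. You instead exhibit the bound directly: for $X=\cT_w\in \cX$ with $w=w_1\dots w_n$ you show that the $n$ distinct ancestral branch points $g_{w_1\dots w_j}(0)$, $j=0,\dots,n-1$, must all lie in $\cV$, since otherwise the tile $Y\in\cX$ containing such a point in its interior would, by the ``nested or almost disjoint'' dichotomy for tiles in $\cX^*$ (which follows from \eqref{eq:1tilesint} and \eqref{eq:diasmtile} exactly as you argue), have to contain all three children $\cT_{w_1\dots w_j k}$ and hence strictly contain $X$, contradicting the near-disjointness of distinct tiles in $\cX$ from Lemma~\ref{lem:vt}~\ref{item:vt5} (or, in your formulation, the disjointness of their interiors from Lemma~\ref{lem:vt}~\ref{item:vt2}). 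What the paper's induction buys is brevity given the machinery of Lemmas~\ref{lem:vt} and~\ref{lem:vtt} already in place; what your argument buys is more information: it identifies precisely which vertices force the level of each tile, proving the slightly sharper statement that $\cV$ must contain every point $g_{w_1\dots w_j}(0)$ with $w_1\dots w_j$ a proper prefix of $w$ whenever $\cT_w\in\cX$, from which $\ell(X)\le\#\cV$ follows by counting via Lemma~\ref{lem:branch}~\ref{item:branch2}. The only blemishes are cosmetic: you reuse the letter $k$ both as a branch index and as a prefix length in the final step, and the nested-tile dichotomy, while standard, deserves to be stated as a separate claim since the paper never records it explicitly. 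Your argument for the lower bound $\ell(X)\ge 1$ coincides with the paper's.
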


The requirement $\cX\sub \cX^*$ means that each $X\in \cX$ is a
set of the form $X=\cT_w$ for some $w\in \A^*$ as in
\eqref{eq:def_Tw}. Accordingly, the level
$\ell(X)=\ell(w)\in \N_0$ of $X$ is well defined.

In this context, the concept of a {\em tile} has two different
meanings. Namely, a tile can be an element of the set of tiles
$\cX$ in the decomposition of $\cT$ induced by the given finite
set $\cV$, or a tile can be an element of $\cX^*$ as defined in
\eqref{eq:defcX}. If we require $\cX\sub \cX^*$, then every tile
in $\cX$ is a tile in $\cX^*$, but not every tile in $\cX^*$ is
necessarily a tile in $\cX$.  An $n$-tile, or a tile (without
further specification), is always understood to be in $\cX^*$. It
will be stated explicitly if we refer to a tile in $\cX$.

\begin{proof} 
  The second statement is immediate. Indeed, the only tile of
  level $0$ is $X= \cT$. If $\cV\ne \emptyset$, then for each
  tile $X\in \cX\sub \cX^*$ we have $X\ne \cT$ as follows from
  Lemma~\ref{lem:vt}~\ref{item:vt1}, and so $\ell(X)\ge 1$.

  Let us now prepare the proof of the first statement. Recall
  that if $m,n\in \N_0$ with $n\geq m$, then each $n$-tile
  $X\in \cX^*$ is contained in an $m$-tile $Y\in \cX^*$.  We also
  know by \eqref{eq:bdry_T1} that the point $0\in \cT$ belongs to
  the boundary of each $1$-tile, namely $\cT_1,\cT_2,\cT_3$.
  
  \smallskip
  \emph{Claim~1. }Let $X\in \cX^*$ be a tile with $0\in X$. Then either
  $0\in \partial X$ or $X=\cT$.
  \smallskip

  Indeed, suppose $n=\ell(X)\in \N_0$ is the level of a tile $X\in \cX^*$ with
   $0\in X$. Assume that
   $0$ is contained in the interior of $X$.
   Recall that we regard $\cT$ as the ambient space here, as
   before. 
  If $n\geq 1$, there
  exists  a $1$-tile $Y\supset X$. Then $0$ is an interior point of
  $Y$, but we know that $0\in \partial Y$. This is a contradiction. So $0\in \partial X$ unless  $n=0$. In the latter  case,    $X=\cT$ and $0\notin \partial\cT =\emptyset$. 
Claim~1 follows.
  \smallskip
  
  We are now ready to prove the first (i.e., the main) statement
  in the lemma by induction on $\#\cV\in \N_0$.
  
  To start the induction, assume $\#\cV=0$. Then $\cV=\emptyset$
  and $X=\cT$ is the only tile in $\cX$. Then clearly
  $\ell(X)=0\le \#\cV=0$, as desired.

  For the inductive step we assume that $\#\cV\ge 1$ and that the
  statement is true for all decompositions of $\cT$ into tiles induced  by
  sets of branch points with fewer than $\#\cV$ points.

  \smallskip 
  {\em Claim~2.} We have $0\in \cV$. 
  \smallskip

  To see this, note that $\cX$ is a covering of  $\cT$ (see Lemma~\ref{lem:vt}~\ref{item:vt8}). Hence there  exists a 
  tile $X\in \cX$ with $0\in X$. By Claim~1, there are two
  possible cases.  If $X=\cT$, we have $\cV=\emptyset$
  contradicting our assumption $\#\cV\geq 1$. Thus we must have
  $0\in \partial X\sub \cV$ (see 
  Lemma~\ref{lem:vt}~\ref{item:vt2}), which implies $0\in \cV$, as desired.
  Claim~2 follows.
  \smallskip
  
  We know that  $0$ splits $\cT$ into the three branches
  $\cT_1, \cT_2, \cT_3$. Since $0\in \cV$ by Claim~2,
  each tile $X\in \cX$ is contained in one such branch $\cT_k$ (see Lemma~\ref{lem:vt}~\ref{item:vt1}).
  We now want to   apply the induction hypothesis
  to these subtrees $\cT_k$.

  For this we  fix $k\in \{1,2,3\}$, and  let
  $$\cV_k \coloneqq \cV\cap
   \inte(\cT_k)= (\cV \cap \cT_k)\setminus\{0\}. $$  This  set  consists of branch points  of $\cT_k$ as follows from  
   Lemma~\ref{lem:subt}~\ref{item:subt0}, and induces a
  decomposition $\cX_k$ of the tree $\cT_k$ into tiles. Indeed,
  $X\in \cX_k$ if and only if $X\in \cX\sub \cX^*$ and $X\subset \cT_k$ (see Lemma~\ref{lem:vtt}~\ref{item:vtt3}).
  In particular,  $\cX_k\sub \cX^*$.
 Moreover, $\#\cV_k< \#\cV$, since $0\in \cV$ is not contained in
  $\cV_k$.
  
The map $g_k^{-1} \colon \cT_k \to\cT$ is a
  homeomorphism that sends  $(n+1)$-tiles contained in $\cT_k$ to
  $n$-tiles of $\cT$ (see \eqref{eq:def_fj} and \eqref{eq:def_Tw}).
   Let
  $\widetilde{\cV}_k\coloneqq g_k^{-1}(\cV_k)$. The set 
  $\widetilde{\cV}_k$ consists of branch points of $\cT$ and  induces the
  decomposition $\widetilde{\cX}_k = g_k^{-1}(\cX_k)=\{g_k^{-1}(X): X\in \cX_k\}$ of $\cT$ into
  tiles.  Then $\widetilde{\cX}_k \sub \cX^*$. Since $\#\widetilde{\cV}_k = \#\cV_k < \#\cV$, we may
  apply the induction hypothesis to $\widetilde{\cV}_k$ and conclude
  \begin{equation*}
    \ell(\widetilde{X})
    \leq
    \#\widetilde{\cV}_k
    =
    \#\cV_k
    \leq
    \#\cV-1
  \end{equation*}
  for each $\widetilde{X}\in \widetilde{\cX}_k$.  If $X\in \cX_k$, then  $g_k^{-1}(X)\in
  \widetilde{\cX}_k$ and so
  \begin{equation*}
    \ell(X) = \ell(g_k^{-1}(X)) +1 \leq \#\cV.
  \end{equation*}
 Since each $X\in \cX$ is contained in one  of the sets   $\cX_k$ with  $k\in \{1,2,3\}$, the inductive
  step follows. This completes  the proof.
\end{proof}

In our construction in Section~\ref{sec:decomp}, the $2$-tiles
containing one of the  leaves $-1$  or $1$ of $\cT$ will play a special
role. We record the following statement related to this. 


\begin{lemma}
  \label{lem:2tileCSST}
  The tile $X=\cT_{11}$ is the only tile $X\in \cX^*$ with
  $-1\in X$ and $-1/2\in \partial X$.  The tile $X=\cT_{22}$ is
  the only tile $X\in \cX^*$ with $1\in X$ and $1/2\in \partial X$.
\end{lemma}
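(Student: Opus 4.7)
The plan is to prove the two claims by the same argument with the roles of $g_1,g_2$ and $-1,1$ interchanged, so I will describe only the first and then indicate how the second follows. Fix a tile $X=\cT_w\in\cX^*$ with $w\in\A^*$ satisfying $-1\in X$ and $-1/2\in\partial X$. The strategy is: first pin down the word $w$ from the condition $-1\in\cT_w$, then use the boundary condition to pin down its length.

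For the first step, I would show by induction on $n=\ell(w)$ that $-1\in\cT_w$ forces $w=1^n=11\dots1$. The base case $n=0$ is trivial, since $\cT$ is the only $0$-tile. For the inductive step, assume $-1\in\cT_{1^k}$; note that $g_1(-1)=-1$, so $g_{1^k}(-1)=-1$. The three $(k+1)$-tiles contained in $\cT_{1^k}$ are $\cT_{1^k 1},\cT_{1^k 2},\cT_{1^k 3}$, and they are the images under the homeomorphism $g_{1^k}$ of $\cT_1,\cT_2,\cT_3$. Since $-1\in\cT_1$ but $-1\notin\cT_2,\cT_3$ by Lemma~\ref{lem:1tilesCSST}~\ref{item:1tilesCSST2}, the only $(k+1)$-tile inside $\cT_{1^k}$ containing $-1$ is $\cT_{1^{k+1}}$. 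Thus $w=1^n$ as claimed.

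For the second step, I would use the boundary information to determine $n$. The cases $n=0$ and $n=1$ are immediately excluded: $\partial\cT=\emptyset$, and $\partial\cT_1=\{0\}$ by \eqref{eq:bdry_T1}, neither of which contains $-1/2$. For $n\ge 2$, Lemma~\ref{lem:bdry}~\ref{item:bdry1} gives
\begin{equation*}
  \partial\cT_{1^n}\subset\{g_{1^n}(-1),g_{1^n}(1)\}=\{-1,g_{1^n}(1)\}.
\end{equation*}
Now $-1$ is a leaf of $\cT$ by Lemma~\ref{lem:1tilesCSST}~\ref{item:1tilesCSST2}, hence not a branch point of $\cT$, hence not in $\cV^*$ by Lemma~\ref{lem:branch}~\ref{item:branch1}; since $\cV^*$ is exactly the union of all tile boundaries, $-1$ cannot lie in $\partial\cT_{1^n}$. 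So the condition $-1/2\in\partial\cT_{1^n}$ reduces to $g_{1^n}(1)=-1/2$. Iterating $g_1(z)=(z-1)/2$ from $g_1(1)=0$ gives $g_{1^n}(1)=-1+2^{-(n-1)}$ for $n\ge 1$, and this equals $-1/2$ precisely when $n=2$. Hence $X=\cT_{11}$, as required.

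The second statement is proved by the mirror argument: $1$ is a fixed point of $g_2$ and belongs to $\cT_2$ but not $\cT_1,\cT_3$, so $1\in\cT_w$ forces $w=2^n$; the leaf $1$ is not in $\cV^*$, so $\partial\cT_{2^n}\subset\{g_{2^n}(-1)\}$ for $n\ge 2$, and the explicit iteration of $g_2(z)=\bar z/2+1/2$ starting from $g_2(-1)=0$ yields $g_{2^n}(-1)=1-2^{-(n-1)}=1/2$ only when $n=2$. I expect no real obstacle; the only point requiring a touch of care is the observation that leaves cannot lie on any tile boundary, which is what makes $g_{1^n}(-1)=-1$ (resp.\ $g_{2^n}(1)=1$) drop out of the candidate boundary sets and leaves a single equation that uniquely determines $n=2$.
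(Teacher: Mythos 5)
Your proof is correct; all the computations check out ($g_1(-1)=-1$, $g_{1^n}(1)=-1+2^{-(n-1)}$, and the mirror identities for $g_2$), and the key observation that a leaf of $\cT$ can never be a tile boundary point (via Lemma~\ref{lem:bdry}~\ref{item:bdry3} and Lemma~\ref{lem:branch}~\ref{item:branch1}) is exactly the right tool to eliminate $g_{1^n}(-1)=-1$ from the candidate boundary set. The route is, however, somewhat different from the paper's. You determine the combinatorial address first (an induction showing $-1\in\cT_w$ forces $w=1^n$) and then solve for the level $n$ by explicitly computing the orbit of $1$ under iteration of $g_1$. The paper instead pins down the level by a metric argument: since $\diam(\cT_w)=2^{-\ell(w)+1}$ and $\cT_w$ must contain both $-1$ and $-1/2$, one gets $2^{-n+1}\ge 1/2$, ruling out $n\ge 3$ at once; the cases $n=0,1$ are excluded as you do, and then the two letters of the word are identified by containment among $1$-tiles and an application of $g_1^{-1}$. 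The paper's diameter estimate is slightly quicker for bounding the level and avoids the closed-form expression for $g_{1^n}(1)$, whereas your argument is purely combinatorial-algebraic and makes the structure ($w$ must be a power of the letter $1$) more transparent. One small point: the lemma also asserts that $\cT_{11}$ \emph{does} have the two stated properties, and the paper verifies this explicitly; in your write-up existence is only implicit, but it follows immediately from your own computations, since $\partial\cT_{11}$ is a non-empty subset of $\{-1,-1/2\}$ by Lemma~\ref{lem:bdry}~\ref{item:bdry1} that cannot contain the leaf $-1$, hence equals $\{-1/2\}$. It would be worth adding that one sentence.
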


See Figure~\ref{fig:2tiles} for an illustration.  

\begin{proof} We will only prove the first statement. The second statement is proved along very similar lines and we omit the details. 
  
 First we  consider $\cT_{11}$. Note  that
  $-1=g_{11}(-1)\in g_{11}(\cT)=\cT_{11}$. Since $-1$ is a leaf and not a branch point of $\cT$,  it
  is not a boundary point of $\cT_{11}$, because by  Lemma~\ref{lem:bdry}~\ref{item:bdry3} and Lemma~\ref{lem:branch}~\ref{item:branch1} 
the boundary point of any tile is a branch point of $\cT$. 
So
  $g_{11}(-1)=-1\notin \partial \cT_{11}$. Hence
  $-1/2=g_{11}(1)\in\partial \cT_{11}$, as follows from
  Lemma~\ref{lem:bdry}~\ref{item:bdry1}. Thus $\cT_{11}$ is a
  tile with $-1\in \cT_{11}$ and $-1/2\in \partial \cT_{11}$.
 
  Now suppose $X\in \cX^*$ is any tile with $-1\in X$ and $-1/2\in \partial X$, and let $n\in \N_0$ be the level of $X$. 
  Then $n\ne 0,1$, 
  because  the   $0$-tile $\cT$ has empty boundary and the $1$-tiles $\cT_1, \cT_2, \cT_3$ do not
  contain $-1/2$ in their boundaries (see \eqref{eq:bdry_T1}).

  We have $\diam(X)=2^{-n+1}\ge |-1/2-(-1)|=1/2$, which rules out $n\ge 3$. It follows that $n=2$, and so $X$ must be a $2$-tile,
  say $X=\cT_{k\ell}$ with $k,\ell\in \{1,2,3\}$.
  Then $-1\in X=\cT_{k\ell}\sub \cT_k$.  Since 
$\cT_2$ and $\cT_3$ do not contain $-1$ (see
  Lemma~\ref{lem:1tilesCSST}~\ref{item:1tilesCSST1}), we have $k=1$, and so $X$ must be one of the $2$-tiles
  $\cT_{11},\cT_{12}, \cT_{13}$. 
  
  The homeomorphism $g_1^{-1}\colon \cT_1\to \cT$ sends these
  $2$-tiles that are all contained in $\cT_1$ to
  $\cT_1,\cT_2,\cT_3$, respectively. Since
  $-1=g_1^{-1}(-1)\in g_1^{-1}(X)$ and 
  $-1\notin \cT_2, \cT_3$,
  we must have
  $g_1^{-1}(X)=\cT_1$ and so $X=\cT_{11}$. Hence $X=\cT_{11}$ is
  indeed the only tile $X\in \cX^*$ with $-1\in X$ and
  $-1/2\in \partial X$.
\end{proof}

\section{Subdivisions of quasiconformal trees}
\label{sec:subdivisions-trees}

The results in the previous two sections imply  that a metric space $T$ that is
quasisymmetrically equivalent to the CSST must be  a uniformly
branching trivalent quasiconformal tree. This is the ``only
if'' implication in Theorem~\ref{thm:CSST_qs}. In this section  we start with
the proof of the ``if'' implication.

In the following, we assume that $T$ is a uniformly
branching trivalent quasiconformal tree. We need to show
that such a tree $T$  is quasisymmetrically equivalent to $\cT$. By rescaling the metric on $T$ if necessary, we may assume that $\diam(T)=1$ for convenience. 

Recall from Section~\ref{sec:topology-trees} that  a finite set
$\V\subset T$ that does not  contain any leaf of $T$  decomposes
$T$ into a set of tiles $\X$. These tiles are subtrees of $T$. We
want to map them to tiles of $\cT$, i.e., elements of $\cX^*$
(see \eqref{eq:defcX}). By 
Lemma~\ref{lem:bdry}~\ref{item:bdry1} each  tile in $\cT$ distinct from $\cT$ itself has
one or two boundary points. For this reason, we are
interested in decompositions such that every tile $X\in \X$ has one or two 
boundary points. Accordingly,  we call  $\X$ an \emph{edge-like
  decomposition} of $T$ if $\V=\emptyset$ and $\X=\{T\}$ (as a degenerate case), or  if $\V\ne \emptyset$ and $\#\partial 
  X\le 2$ for each $X\in \X$. Note that in the latter case 
  $1\le \# \partial X\le 2$ by Lemma~\ref{lem:vt}~\ref{item:vt3}. We say that a tile $X\in \X$ in an edge-like decomposition $\X$ of $T$ is  a {\em
  leaf-tile} if $\#\partial X=1$ and an {\em edge-tile} if
$\#\partial X=2$. 

The reason for this terminology is that in an 
edge-like
  decomposition $\X$ of $T$ the tiles in $\X$ satisfy  the same incidence relations as the edges in a finite simplicial tree. Here the 
  leaf-tiles in $\X$ correspond to edges of the simplicial tree that contain a leaf  and the edge-tiles in $\X$ to  edges  that do not contain a leaf of the simplicial tree. 

We now set  $\V^0=\emptyset$,  fix
$\delta \in (0,1)$, and  for $n\in \N$  define
\begin{equation}
  \label{eq:defvn}
  \V^n=\{ p\in T:
  \text{ $p$ is a branch point of $T$ with } H_T(p)\ge\delta^n\}.
\end{equation}
Recall that the height $H_T$ was defined in \eqref{eq:HT}. 
Clearly, $\{\V^n\}$  is an increasing sequence as in \eqref{eq:Vn_incrs} and none of the sets $\V^n$ contains a leaf of 
$T$.   We will momentarily see that each set $\V^n$ is finite.  We denote by
$\X^n$ the  set of
tiles in  the decomposition of $T$ induced by $\V^n$. Then the sequence $\{\X^n\}$ forms a subdivision of $T$ in the
sense of Definition~\ref{def:subdiv}
(see the discussion after \eqref{eq:Vn_incrs}). 

As we will see, this subdivision $\{\X^n\}$ has some good  properties, in
particular it  is a quasi-visual subdivision of $T$ (see Definition~\ref{def:qvsub}). By choosing $\delta\in (0,1)$
sufficiently small, we  can also  ensure that the points in $\V^{n+1}$
separate the points in $\V^n$ in a suitable way. We summarize this in the following statement.


\begin{proposition}
  \label{prop:decomp}
  Let $T$ be a uniformly branching trivalent quasiconformal
  tree with $\diam(T)=1$. Let $\delta\in (0,1)$, and $\{\V^n\}_{n\in\N_0}$ be as in
  \eqref{eq:defvn}. Then the following statements are true:
  \begin{enumerate}
   \item
   \label{item:decomp0}
  $\V^n$ is a finite set for each $n\in \N_0$. 
\item
   \label{item:decomp5}
   $\{\X^n\}_{n\in \N_0}$ is a quasi-visual subdivision of $T$.
   %
   \setcounter{mylistnum}{\value{enumi}}
 \end{enumerate}
 Let $n\in \N_0$, $X\in \X^n$, and $\V_X\coloneqq \V^{n+1} \cap
 \inte(X)$. 
 Then we have: 
 \begin{enumerate}
   \setcounter{enumi}{\value{mylistnum}}
   
    \item 
   \label{item:decomp1}
   $\#\partial X\leq 2$, and if we denote by $\X_X$ the decomposition of $X$ induced by
   $\V_X$, then  $\X_X$ is edge-like.



 \item 
    \label{item:decomp3}
    There exists $N=N(\delta)\in \N$  depending on $\delta$, but independent of $n$ and $X$, such that
    $\#\V_X\le N$.
   %
   \setcounter{mylistnum}{\value{enumi}}
 \end{enumerate}
 If $\delta\in (0,1)$ is sufficiently small (independent of $n$ and $X$), then we also have:
 \begin{enumerate}
   \setcounter{enumi}{\value{mylistnum}}
 \item
   \label{item:decomp_Vfinite}
   $\#\V_X\geq 2$. 

 \item 
   \label{item:decomp4}
   If $\#\partial X=2 $ and
   $\partial X=\{u,v\}\sub T$, then $(u,v)\cap \V_X$ contains at
   least three elements.
   
 \end{enumerate} 
\end{proposition}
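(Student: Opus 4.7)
The plan is to verify each of the six items in turn, drawing on trivalence, uniform relative separation and density of branch points, bounded turning, and doubling. Items (i) and (iv) reduce to uniform separation plus doubling: distinct elements of $\V^n$ are $\gtrsim\delta^n$-separated, so $\V^n$ is a separated subset of the bounded doubling space $T$ and hence finite; for (iv), $\V_X\sub X$ is $\gtrsim\delta^{n+1}$-separated while $\diam(X)\lesssim\delta^n$ (to be shown in the course of proving (ii)), so doubling caps $\#\V_X$ by a quantity $N(\delta)$ depending only on $\delta$ and the ambient constants.

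The bound $\#\partial X\le 2$ in (iii) is where trivalence enters. Assume toward a contradiction that three distinct $p_1,p_2,p_3\in\partial X\sub \V^n$ exist, and let $c\in [p_1,p_2]\cap[p_2,p_3]\cap[p_3,p_1]\sub X$ be their center (Lemma~\ref{lem:center}). By Lemma~\ref{lem:center}(ii) we have $H_T(c)\ge \min_i H_T(p_i)\ge \delta^n$; since $\inte(X)\cap \V^n=\emptyset$ by Lemma~\ref{lem:vt}(ii), this forces $c\in \partial X$, and Lemma~\ref{lem:vt}(iii) then says $c$ is a leaf of $X$. Regardless of whether $c=p_i$ for some $i$ or $c\notin\{p_1,p_2,p_3\}$, at least two of the half-open arcs $(c,p_j]\sub X\setminus\{c\}$ lie in distinct components of $T\setminus\{c\}$ (since $T$ is trivalent and $c$ lies on an arc between the remaining $p$'s). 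But $X\setminus\{c\}$ is a connected subset of $T\setminus\{c\}$ and must lie in a single such component, a contradiction. The edge-like property of $\X_X$ then follows from Lemma~\ref{lem:vtt}: the tiles of $\X_X$ are exactly $\{X'\in\X^{n+1}:X'\sub X\}$, and the $X$-boundary of any such $X'$ is contained in its $T$-boundary, which has at most two points by the preceding.

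For (ii) I invoke Lemma~\ref{lem:visual}, which reduces matters to $\diam(X)\asymp\delta^n$ and $\dist(X,Y)\gtrsim\delta^n$ for disjoint $X,Y\in\X^n$. For the diameter upper bound, pick $x,y\in\inte(X)$ with $|x-y|$ close to $\diam(X)$ (possible since $\inte(X)$ is dense in $X$) and apply uniform relative density to produce a branch point $z\in[x,y]\sub X$ with $H_T(z)\gtrsim|x-y|$. A standard tree fact---a leaf of a subtree that lies on an arc within the subtree must be an endpoint of that arc---shows that $z\in\partial X$ would force $z\in\{x,y\}\sub\inte(X)$, absurd. Hence $z\in\inte(X)\sub T\setminus \V^n$, giving $H_T(z)<\delta^n$ and $|x-y|\lesssim\delta^n$. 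For the lower bound, when $n=0$ we have $X=T$ and $\diam(X)=1=\delta^0$; if $\#\partial X=1$ then Lemma~\ref{lem:vt}(vii) identifies $X$ with a branch of some $u\in \V^n$, so $\diam(X)\ge H_T(u)\ge \delta^n$; if $\#\partial X=2$ with $\partial X=\{u,v\}$, uniform separation gives $\diam(X)\ge|u-v|\gtrsim\delta^n$. For disjoint $X,Y$ and $x\in X$, $y\in Y$, track $[x,y]$: its first and last visits $p_1,p_2$ to $\V^n$ lie in $\partial X$ and $\partial Y$ respectively, and since $X\cap Y=\emptyset$ they are distinct, so uniform separation gives $|p_1-p_2|\gtrsim\delta^n$ and bounded turning yields $|x-y|\ge |p_1-p_2|/K\gtrsim\delta^n$.

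Items (v) and (vi) require $\delta$ small. Focus on (vi) for an edge-tile with $\partial X=\{u,v\}$; here $[u,v]\sub X$ has $|u-v|\asymp\delta^n$. Parametrize $[u,v]$ continuously by $\gamma\:[0,1]\to[u,v]$ and use the intermediate value theorem applied to $t\mapsto|u-\gamma(t)|$ to select $x_0,\ldots,x_k\in(u,v)$ satisfying $|u-x_i|=i|u-v|/(k{+}2)$; reverse triangle gives consecutive distances $|x_i-x_{i+1}|\ge |u-v|/(k{+}2)\gtrsim \delta^n$. For $\delta$ small enough in terms of $k$ and the uniform density constant, uniform relative density applied to each subarc $[x_i,x_{i+1}]\sub(u,v)\sub\inte(X)$ yields a branch point $z_i\in (u,v)\cap \V^{n+1}=(u,v)\cap\V_X$. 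Consecutive subarcs meet only at a single endpoint and non-consecutive ones are disjoint, so any two equal $z_i$ must be indexed by consecutive integers with common value $x_{i+1}$; it follows that every equivalence class in $\{z_0,\ldots,z_{k-1}\}$ has size at most two, giving at least $\lceil k/2\rceil$ distinct values. Choosing $k=5$ produces the three distinct points demanded by (vi) and in particular (v) for edge-tiles; the leaf-tile case of (v), with $\partial X=\{u\}$, is handled analogously along $[u,x]$ for some $x\in\inte(X)$ with $|u-x|\gtrsim\diam(X)/2\gtrsim\delta^n$. The main obstacle is the combinatorics of (vi): one must simultaneously keep all constructed points in the open arc $(u,v)$, fit enough subarcs of size $\gtrsim\delta^n$, and absorb possible coincidences of the branch points at shared endpoints, which is precisely what forces $k=5$.
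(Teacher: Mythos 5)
Your proof is essentially correct, and for items (i)--(iv) it follows the paper's route closely: the paper deduces finiteness of $\V^n$ from compactness rather than doubling, and rules out a third boundary point of $X$ via Lemma~\ref{lem:vt}~\ref{item:vt1} (a tile lies in the closure of a single component of $T\setminus\{c\}$) rather than via connectedness of $X\setminus\{c\}$ for the leaf $c$, but these are cosmetic differences. For (v) and (vi) your argument is genuinely different. The paper proves (vi) by letting $\V^{n+1}\cap[u,v]$ cut $[u,v]$ into $k$ non-overlapping subarcs, each contained in an $(n+1)$-tile and hence of diameter $\lesssim\delta^{n+1}$ by the diameter estimate already established in the proof of (ii); then $\delta^n\lesssim\abs{u-v}\lesssim k\,\delta^{n+1}$ forces $k\ge 4$ for small $\delta$. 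It proves (v) by a diameter comparison: once $\diam(Y)\le\frac14\diam(X)$ for all $(n+1)$-tiles $Y$, the case $\#\V_X=1$ is impossible because $X$ would then split into three branches of a single interior branch point, two of which would have to cover most of $X$. You instead manufacture the required branch points directly by chopping $[u,v]$ into pieces of size $\asymp\delta^n$ and applying uniform relative density to each. Your route is more self-contained (it does not lean on the level-$(n+1)$ diameter estimates), at the price of the ordering and coincidence bookkeeping; the paper's is shorter because it recycles the tile-diameter bound.

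Two small repairs are needed. First, in the selection of $x_0,\dots,x_k$ the prescription $\abs{u-x_i}=i\abs{u-v}/(k+2)$ gives $x_0=u\notin(u,v)$; you want $(i+1)\abs{u-v}/(k+2)$. More substantively, the function $t\mapsto\abs{u-\gamma(t)}$ need not be monotone, so points produced by a bare appeal to the intermediate value theorem need not occur in order along $[u,v]$, and your disjointness claims for the subarcs $[x_i,x_{i+1}]$ require that order. Taking $t_i$ to be the \emph{first} parameter at which $\abs{u-\gamma(t)}$ attains its target value fixes this, since the function must pass through each smaller target before reaching a larger one (the reverse-triangle lower bound on consecutive distances is unaffected). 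Second, your case analysis for (v) covers edge-tiles and leaf-tiles but not the tile with $\partial X=\emptyset$, namely $X=T$ at level $n=0$, where $\V_X=\V^1$; the same subdivision argument applied to an arc $[x,y]$ with $\abs{x-y}=\diam(T)=1$ closes this case.
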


We will see in the proof that $\{\X^n\}$ is in fact a
quasi-visual subdivision that satisfies the (stronger)
conditions of Lemma~\ref{lem:visual}. 
The first part of  \ref{item:decomp1} implies  that $\X^n$ is
an edge-like decomposition of the whole tree $T$ for each $n\in 
\N_0$.
It follows from  \ref{item:decomp_Vfinite} that if 
$\delta\in (0,1)$ is small enough, then we have $\V^n\neq
\emptyset$ for each $n\in \N$.




   
   

\begin{proof} \ref{item:decomp0} This is clear for $n=0$, since $\V^0=\emptyset$.  

Let $n\in \N$, and $u,v\in \V^n$ be distinct points.  Since
  $T$ has uniformly relatively separated branch points, by definition of $\V^n$ we have
  \begin{equation}
    \label{eq:vw_ge_dn}
    \abs{u-v}\gtrsim \min\{H_T(u), H_T(v)\} \geq \delta^n,
  \end{equation}
  where $C(\gtrsim)$ is the constant in 
  Definition~\ref{def:unif_sepa} for the tree $T$. Since $T$ is compact,  \eqref{eq:vw_ge_dn} implies that $\V^n$ is finite. 

\smallskip
We now first prove  \ref{item:decomp1} 
before we establish  \ref{item:decomp5}. To this end, consider $n\in
\N_0$, $X\in \X^n$, $\V_X=\V^{n+1}\cap \inte(X)$, and, as in the statement of  \ref{item:decomp1},  let $\X_X$
be the decomposition of $X$ induced by $\V_X$.

  \smallskip
  \ref{item:decomp1}
 To show 
  that $\#\partial X\le 2$, we argue by contradiction and assume
  that there are three distinct points $x,y,z\in\partial X\sub X$. Then 
  $x,y,z\in\partial X\sub \V^n$ by Lemma~\ref{lem:vt}~\ref{item:vt2}. We now consider the center  of $x,y,z$ (see Lemma~\ref{lem:center}~\ref{item:center1}), 
  namely the point $c\in T$ with
  \begin{equation*}
    \{c\}= [x,y]\cap [y,z]\cap[z,x].  
  \end{equation*}
  Since $X$ is a
  subtree of $T$, we have $c\in X$.  By Lemma~\ref{lem:center}~\ref{item:center2} this is a branch point of $T$
  with
  \begin{equation*}
    H_T(c)\ge \min \{ H_T(x), H_T(y), H_T(z)\} \ge \delta^n.
  \end{equation*}
   So $c$ belongs to $\V^n$. At least two of the three points $x$,
  $y$, $z$ are distinct from $c$, say $x,y\ne c$. Then
  $c\in (x,y)$ and so $x$ and $y$ cannot lie in the same tile
  $X$ as follows from Lemma~\ref{lem:top_T}~\ref{item:top_T1} and Lemma~\ref{lem:vt}~\ref{item:vt1}. This is a contradiction, showing that indeed
  $\#\partial X\le 2$.

  Now consider a tile $Y\in \X_X$.
  Then $Y\sub X$, and $Y\in \X^{n+1}$ as follows from
  Lemma~\ref{lem:vtt}~\ref{item:vtt3}.  If $\partial_X Y$ denotes
  the relative boundary of $Y$ as a subset of $X$, then
  $\partial_X Y\sub \partial Y$ (see
  Lemma~\ref{lem:vtt}~\ref{item:vtt4}); the latter inclusion is
  actually true in every topological space).  Since
  $Y\in \X^{n+1}$ we have $\#\partial Y\leq 2$ by the previous
  considerations (applied to $Y$), and so
  $\#\partial_{X} Y \leq 2$. This means that the decomposition $\X_X$ of
  $X$ induced by $\V_X$ is edge-like, and \ref{item:decomp1}
  follows.

  \smallskip
  \ref{item:decomp5}
  We now show that $\{\X^n\}$ is a quasi-visual subdivision of
  $T$. As we have already discussed in the beginning of this
  section, $\{\X^n\}$ is a subdivision of $T$ in the sense of
  Definition~\ref{def:subdiv}. It remains to show that it is a
  quasi-visual approximation (as in
  Definition~\ref{def:qv_approx}). For this we will verify
  conditions~\ref{item:visual1} and~\ref{item:visual2} in
  Lemma~\ref{lem:visual}. Fix some $n\in \N_0$. 

  \smallskip
  \emph{Claim 1.}
  $\diam(X)\asymp \delta^n$ for all $X\in \X^n$, where
  $C(\asymp)$ is independent of $n$, $X$, and $\delta$.
  \smallskip

  Let $X\in \X^n$ be arbitrary. 
  If $n=0$, then $X=T$ and $\diam(T)=1= \delta^0$. So the
  statement is obviously true in this case.

 Now assume $n\geq 1$, and   let $u,v\in X$ be arbitrary 
  distinct points. Then $(u,v)\cap\V^n=\emptyset$; 
otherwise,     $u$ and $v$
  could not be contained in the same tile $X$ as follows from Lemma~\ref{lem:vt}~\ref{item:vt1}.  Since $T$ has
  uniformly relatively dense branch points, there is a branch point $p$ of $T$ with
  $p\in (u,v)$ and 
  \begin{equation*}
    H_T(p) \gtrsim \abs{u-v},  
  \end{equation*}
  where $C(\gtrsim)$ only 
 depends on the  constant  in Definition~\ref{eq:unif_sepa} for $T$
 (to see this, choose a branch point $p\in [u',v']\sub (u,v)$ with 
 $u',v'\in (u,v)$ very close to $u,v$, respectively).  
  Then $p\notin \V^n$, and so  
  $$\delta^n > H_T(p)\gtrsim  \abs{u-v}. $$ It follows that
  $\diam(X) \lesssim \delta^n$ with $C(\lesssim)$ independent of $n$, $X$, and $\delta$. 

 In order to show an inequality in the opposite direction, recall 
   that $\#\partial X\leq 2$ by \ref{item:decomp1} and that   $\partial X\sub \V^n$ by Lemma~\ref{lem:vt}~\ref{item:vt2}.

  If $\partial X=\emptyset$, then $X=T$ and so  
  $\diam(X)= 1\geq \delta^n$.
  
Assume $\partial X\ne \emptyset$. 
If $\partial X$  consists of one point $p\in T$, then $p\in \V^n$ and so $p$ is 
  a branch point of $T$ with $H_T(p)\ge \delta^n$. In this case,
  $X$ is a branch of $p$ in $T$ (see Lemma~\ref{lem:vt}~\ref{item:vt7}), and so
  \begin{equation*}
    \diam(X)
    \ge
    H_T(p)\ge \delta^n.  
  \end{equation*}
 
 If $\partial X$ consists of two distinct
  points $u,v\in \V^n$, then by  \eqref{eq:vw_ge_dn} we have
  \begin{equation*}
    \diam (X)
    \ge
    \abs{u-v}
    \gtrsim
    \delta^n,  
  \end{equation*}
  where $C(\gtrsim)$ is the constant from
  Definition~\ref{def:unif_sepa}. 

   Claim~1 follows,  and so  condition
  \ref{item:visual1} in Lemma~\ref{lem:visual} is true.



  \smallskip
  \emph{Claim~2.} $\dist(X,Y) \gtrsim \delta^n$ for all $X,Y\in
  \X^n$   with $X\cap Y= \emptyset$, where $C(\gtrsim)$ is
  independent of $n$, $X$, $Y$, and $\delta$.
  \smallskip
  
  To see this second claim, fix arbitrary tiles $X,Y\in \X^n$
  with $X\cap Y=\emptyset$. We can find points $x\in X$ and
  $y\in Y$ with $\abs{x-y}=\dist(X, Y).$ Now consider the unique
  arc $[x,y]$ joining $x$ and $y$ in $T$. As we travel from
  $x\in X$ to $y\not\in X$ along $[x,y]$, there is a last point
  $u\in [x,y]$ with $u\in X$. Then $(u,y]$ is non-empty and
  disjoint from $X$ which implies that
  $u\in \partial X\sub \V^n$.  As we travel from $u\not\in Y$ to
  $y\in Y$ along $[u,y]\sub [x,y]$, there is a first point
  $v\in [u,y]$ with $v\in Y$. Then $v\in \partial Y\sub \V^n$ by
  a similar reasoning.
  
 The points $u,v\in \V^n$ are distinct, because  $u\in X$, $v\in Y$,  and  $X\cap Y=\emptyset$. It follows  that 
  $\abs{u-v}\gtrsim \delta^n$ by \eqref{eq:vw_ge_dn}.   Since $T$ is a quasiconformal tree and hence of bounded turning, we
  have $\diam [x,y]\asymp |x-y|$. This implies that 
  \begin{equation*}
    \dist(X,Y)
    =
    \abs{x-y}
    \asymp
    \diam [x,y]
   \ge
    \abs{u-v}
    \gtrsim
    \delta^n.
  \end{equation*}
  Here all implicit multiplicative constants are independent of
  $n$, $X$, $Y$, and $\delta$. We have shown Claim~2, which is
  condition \ref{item:visual2} in Lemma~\ref{lem:visual}. 
  
  \smallskip
Lemma~\ref{lem:visual} now implies that $\{\X^n\}$ is indeed a
  quasi-visual subdivision of $T$, and \ref{item:decomp5}
  follows.   

  \smallskip
  \ref{item:decomp3}
   By \eqref{eq:vw_ge_dn} two distinct points $u,v\in \V_X\subset \V^{n+1}$ have separation  
  $\abs{u-v}\gtrsim \delta^{n+1}$. On the
  other hand, $\V_X$ is contained in $X$ with
  $\diam(X) \asymp \delta^n$, as we saw in Claim~1. Since $T$ is
  doubling, it follows 
  that there is a constant $N\in \N$ with
  \begin{equation*}
     \#\V_X\le N.
  \end{equation*}  
  Here $N$ depends on $\delta$, the doubling constant of $T$, and
  the constants in \eqref{eq:vw_ge_dn} and Claim~1 (which depend
  on the constants in 
  Definition~\ref{def:unif_sepa} and
  Definition~\ref{def:unif_dense}), but not on $n$ and $X$.
  \smallskip

  Having verified \ref{item:decomp0}--\ref{item:decomp3} for any
  $\delta\in (0,1)$, we now prove the remaining statements that
  require us to choose $\delta$ sufficiently small.

  \smallskip
  \ref{item:decomp_Vfinite}
  By Claim~1 
  we have  $\diam(X)\asymp\delta^n$
   and $\diam(Y)\asymp \delta^{n+1}$ for
  each $Y\in \X^{n+1}$. Here the constant $C(\asymp)$ depends only on the
  constants in Definition~\ref{def:unif_sepa} and
  Definition~\ref{def:unif_dense}, as we have seen. This implies 
  that there exists a constant $\delta_1\in (0,1)$ independent of $n$ and $X$ with the following property:  if  $0<\delta\le \delta_1$ (as we now assume), 
  then 
  \begin{equation}
    \label{eq:XYbdd}
    \diam(Y)\le \tfrac 14 \diam(X)
  \end{equation} 
  for all $Y\in \X^{n+1}$. 

  It follows from Lemma~\ref{lem:vtt}~\ref{item:vtt3} that the
  decomposition $\X_X$ of $X$ induced by $\V_X$ is given by all
  tiles $Y\in \X^{n+1}$ with $Y\sub X$. Then clearly
  $\V_X\ne \emptyset$, because otherwise
  $\X_X=\{X\}\sub \X^{n+1}$ which is impossible by
  \eqref{eq:XYbdd}.
  
  To show that $\#\V_X \geq 2$, assume on the contrary that
  $\V_X=\V^{n+1}\cap \inte(X)$ contains only one point $p$. This
  is a branch point of $T$ and hence a branch point of $X$,
  because $p\in \inte(X)$ (see
  Lemma~\ref{lem:subt}~\ref{item:subt0}). Then the decomposition
  $\X_X$ of $X$ induced by $\V_X$ consists precisely of the three
  branches $Y_1, Y_2, Y_3$ of $p$ in $X$. Moreover, these
  branches are tiles in $\X^{n+1}$ and share the common point
  $p$.  Then
  $$ \diam(X) \le 2 \max\{\diam(Y_k): \, k\in \{1,2,3\}\},$$ 
but this  is impossible by    \eqref{eq:XYbdd}. 

 We conclude that if $0<\delta\le \delta_1$, then necessarily 
  $\#\V_X \geq 2$, and \ref{item:decomp_Vfinite} follows.
  
  \smallskip
  \ref{item:decomp4}
  Now assume that $\#\partial X=2$ and $\partial X= \{u,v\}$.
  Then $u$ and $v$  are distinct points in $\partial X \sub \V^n$, and so
  $\abs{u-v}\gtrsim \delta^n$ by \eqref{eq:vw_ge_dn}. On the
  other hand, the points in $\V^{n+1}\cap[u,v]$ divide $[u,v]\sub X$
  into non-overlapping arcs. Each of these arcs lies in a tile in
  $\X^{n+1}$, and so has diameter $\lesssim \delta^{n+1}$ by
  Claim~1. 
  If there are $k\in \N$ of these arcs it
  follows that
  \begin{equation*}
    \delta^n
    \lesssim
    \abs{u-v}
    \lesssim k\delta^{n+1}, 
  \end{equation*}
  where the constants $C(\lesssim)$ do not depend on $n$, $X$, 
 or  $\delta$.  If $\delta$ is small enough, say 
  $0<\delta\le \delta_2$, where $\delta_2\in (0,1)$ can be chosen independently of
   $n$ and $X$,
  then necessarily $k\ge 4$, and so $\V^{n+1}\cap (u,v)$ contains
  at least three elements. Note that $(u,v)\subset X\setminus \partial X=\inte(X)$.
  Thus $\V_X\cap (u,v) = \V^{n+1} \cap
  (u,v)$ contains at least three elements,
  and \ref{item:decomp4} follows.

  \smallskip
  If we choose $0<\delta\leq \min\{\delta_1,\delta_2\}$, where
  $\delta_1,\delta_2\in (0,1)$ are the constants from
  \ref{item:decomp_Vfinite} and  
  \ref{item:decomp4}, respectively, then   \ref{item:decomp_Vfinite} and  
  \ref{item:decomp4} are both true. This finishes the proof. 
\end{proof}

\section{Quasi-visual subdivisions of the CSST}
\label{sec:decomp}






Let $T$ be a uniformly
branching trivalent quasiconformal tree. 
Then it is possible to show that for every 
quasi-visual subdivision of $T$ there is an isomorphic quasi-visual subdivision of  the CSST $\cT$. From this one can deduce the quasisymmetric equivalence of $T$ and $\cT$  by  Proposition~\ref{prop:qv_f_qs}.

We will actually prove a slightly less general statement, namely we
restrict ourselves to a quasi-visual  subdivision $\{\X^n\}$ of $T$ as in
Proposition~\ref{prop:decomp}.  
This is formulated in  the following proposition, which  is the  main result of this section. 

\begin{proposition}
  \label{prop:exqs}
  Let $T$ be a uniformly branching trivalent quasiconformal tree with $\diam(T)=1$. 
 Suppose  $\V^n$ for $n\in \N_0$  is  as in \eqref{eq:defvn} 
with $\delta\in (0,1)$  so small that 
all the statements  
in Proposition~\ref{prop:decomp} are true for the decompositions
$\X^n$  of $T$ induced by the sets $\V^n$. Then there exists a 
quasi-visual subdivision $\{\Y^n\}_{n\in \N_0}$ of $\cT$ that is 
isomorphic to the quasi-visual subdivision of
  $\{\X^n\}_{n\in \N_0}$ of $T$.
\end{proposition}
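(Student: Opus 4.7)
The plan is to build the subdivision $\{\Y^n\}$ of $\cT$ together with the required isomorphism inductively, following the outline in the introduction. At each level $n$ we will specify a homeomorphism $F^n\colon T\to \cT$, define
\begin{equation*}
  \Y^n = \{F^n(X) : X\in\X^n\},
\end{equation*}
and arrange that $F^{n+1}(X)=F^n(X)$ for every $X\in\X^n$. The bijections $X\mapsto F^n(X)$ will then automatically respect intersections and inclusions, so that $\{\Y^n\}$ is isomorphic to $\{\X^n\}$ as subdivisions in the sense of Section~\ref{sec:appr-subd}. We set $\Y^0=\{\cT\}$ and fix any homeomorphism $F^0\colon T\to\cT$.

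For the inductive step, suppose $F^n$ has been constructed so that the tiles in $\Y^n=\{F^n(X): X\in\X^n\}$ are controlled subtrees of $\cT$ obtained by cutting at branch points of $\cT$. For each $X\in\X^n$ Proposition~\ref{prop:decomp} supplies the edge-like decomposition $\X_X$ of $X$ induced by $\V_X=\V^{n+1}\cap\inte(X)$, with the uniform bounds $2\le\#\V_X\le N=N(\delta)$, and with at least three elements of $\V_X$ on the open arc between the two boundary points whenever $\#\partial X=2$. We apply Lemma~\ref{lem:decomp} to the trivalent subtree $Y=F^n(X)\sub\cT$: it produces a finite set $\cV_Y\sub\inte(Y)$ of branch points of $\cT$ whose induced edge-like decomposition of $Y$ is combinatorially isomorphic to $\X_X$, together with a homeomorphism of $Y$ onto itself fixing $\partial Y$ pointwise that realizes this isomorphism. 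Gluing these local homeomorphisms over all $X\in\X^n$ yields $F^{n+1}\colon T\to\cT$ with $F^{n+1}(X)=F^n(X)$ for each $X\in\X^n$; this defines $\Y^{n+1}$.

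The main obstacle is to ensure that the resulting sequence $\{\Y^n\}$ is quasi-visual. The decisive point is to arrange in the inductive step that each tile of the new decomposition of $Y$ has diameter bounded above by a fixed fraction $\theta\in(0,1)$ of $\diam(Y)$, with $\theta$ independent of $Y$ and $n$. This is where the quantitative properties in Proposition~\ref{prop:decomp} are essential: the bound $\#\V_X\le N$ restricts the combinatorial patterns to finitely many types; the condition $\#\V_X\ge 2$ prevents a leaf-tile of $Y$ from essentially filling $Y$; and the requirement that $(u,v)\cap\V_X$ contains at least three points when $\#\partial X=2$ forces any edge-tile of $Y$ to span only a proper sub-arc of the arc in $Y$ connecting the two points of $\partial Y$. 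Since $\cT$ itself is uniformly branching (Proposition~\ref{prop:CSST_unif}), branch points of $\cT$ are available at every scale inside $Y$ to realize the required pattern; Lemma~\ref{lem:decomp} is stated precisely to produce such a choice with uniform shrinkage factor~$\theta$.

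With this uniform shrinkage, induction on $n$ yields $\diam(Y)\asymp\theta^n$ for every $Y\in\Y^n$. Using the uniform relative separation of the branch points of $\cT$ applied to the boundary points of adjacent tiles, one also obtains $\dist(Y,Y')\gtrsim\theta^n$ for any two disjoint $Y,Y'\in\Y^n$. Lemma~\ref{lem:visual} then implies that $\{\Y^n\}$ is a quasi-visual approximation, and therefore a quasi-visual subdivision, of $\cT$. The bijections $X\mapsto F^n(X)$ assemble into the desired isomorphism between $\{\X^n\}$ and $\{\Y^n\}$, completing the proof.
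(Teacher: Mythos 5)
Your overall architecture (inductive construction of compatible homeomorphisms $F^n$, setting $\Y^n=F^n(\X^n)$, using the quantitative properties \ref{item:decomp3}--\ref{item:decomp4} of Proposition~\ref{prop:decomp} and Lemma~\ref{lem:decomp} in the inductive step) matches the paper. But there is a genuine gap at the decisive point, namely where you argue that $\{\Y^n\}$ is quasi-visual. You claim the inductive step can be arranged with a \emph{uniform shrinkage factor} $\theta\in(0,1)$ so that $\diam(Y)\asymp\theta^n$ for every $Y\in\Y^n$, and you then invoke Lemma~\ref{lem:visual}. This is not achievable by the construction. The tiles $F^n(X)$ are tiles of $\cT$ in the sense of \eqref{eq:defcX}, so their diameters are dyadic, determined by their levels; and Lemma~\ref{lem:decomp}~\ref{item:T_fin_decomp4} only pins the level of a child tile to lie between $\ell(F^n(X))+1$ and $\ell(F^n(X))+\#\V_X$, where $\#\V_X$ varies between $2$ and $N$ from tile to tile. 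Already the siblings inside one parent can have diameters differing by a factor of about $2^{N-1}$, and over $n$ generations two level-$n$ tiles whose ancestors diverged early can accumulate a level discrepancy of order $n(N-1)$, i.e., an unbounded diameter ratio. So the covers $\Y^n$ do \emph{not} satisfy condition~\ref{item:visual1} of Lemma~\ref{lem:visual}, and that lemma cannot be applied; your estimate $\dist(Y,Y')\gtrsim\theta^n$ for disjoint tiles rests on the same false premise.

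What actually saves condition~\ref{item:qv_approx1} of Definition~\ref{def:qv_approx} — and what your proposal omits — is the normalization that every child tile meeting the boundary of its parent is mapped to a tile of level \emph{exactly} $\ell(F^n(X))+2$ (this is the role of Lemma~\ref{lem:decomp}~\ref{item:T_fin_decomp2} and \ref{item:T_fin_decomp3}, enabled by \ref{item:decomp_Vfinite} and \ref{item:decomp4} of Proposition~\ref{prop:decomp}). If two $n$-tiles $X,Y\in\X^n$ intersect, their descending chains of ancestors agree up to some level $k$ and thereafter both pass through the single common boundary point; the exact increment $+2$ along both chains from level $k+1$ onward forces $|\ell(F^n(X))-\ell(F^n(Y))|\le N-1$, whence comparable diameters for \emph{neighboring} tiles only. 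Condition~\ref{item:qv_approx2} is then verified not by a global scale $\theta^n$ but by locating an edge-tile $Z'$ on the arc between two disjoint tiles, noting that $\diam(Z')$ equals the distance between its two boundary points $g_w(\pm1)$, and using quasi-convexity of $\cT$ together with the already-established condition~\ref{item:qv_approx1}. Without this finer, purely neighbor-based argument, the quasi-visuality of $\{\Y^n\}$ does not follow. (A secondary inaccuracy: Lemma~\ref{lem:decomp} does not produce a self-homeomorphism of $Y=F^n(X)$ fixing $\partial Y$; it produces a tile-homeomorphism $h_X\colon X\to\cT$, which one must compose with $g_w$ and normalize on $\partial X$ via \ref{item:T_fin_decomp1} so that it agrees with $F^n$ there.)
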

Recall that the notion of isomorphic subdivisions was based on \eqref{eq:isosubdiv_cap} and \eqref{eq:isosubdiv_incl}.
By applying  Proposition~\ref{prop:qv_f_qs}, we see that there exists a quasisymmetric homeomorphism $F\: T\ra \cT$ that   induces the  isomorphism between the 
quasi-visual subdivision $\{\X^n\}$ of $T$ and  
the 
quasi-visual subdivision $\{\Y^n\}$ of $\cT$. In particular, 
 $T$ and $\cT$ are quasisymmetrically equivalent.  This gives the ``if'' implication in  Theorem~\ref{thm:CSST_qs}.

The proof of Proposition~\ref{prop:exqs} requires some preparation. 
We first study homeomorphisms that send tiles in a decomposition of $T$ to tiles in $\cT$.


\subsection*{Decompositions of trees and tile-homeomorphisms}
\label{sec:finite-decomp-ct}

By Proposition~\ref{prop:decomp}~\ref{item:decomp1}  the decompositions $\X^n$  of  $T$ as in 
Proposition~\ref{prop:exqs} are edge-like in the sense that for each tile $X\in \X^n$ its boundary is the empty set  
(only if $X=T$), or contains one point (then $X$ is a leaf-tile), or two points (then  $X$ is an
edge-tile). The points in $\partial X$ play a special role. They are leaves of $X$ and the only points   where $X$ intersects other tiles of the same level (see
 Lemma~\ref{lem:vt}~\ref{item:vt3}~and~\ref{item:vt5}). 
Accordingly, we
say that the points in $\partial X$ are \emph{marked leaves} of
$X$.


Recall that $X\in \X^n$ (viewed as a tree) has an edge-like
decomposition $\X_X$ induced by $\V_X=\V^{n+1}\cap \inte(X)$  
(see
Proposition~\ref{prop:decomp}~\ref{item:decomp1}). We want to
find a decomposition of $\cT$ into tiles in
$\cX^*$ that is isomorphic to $\X_X$ and respects the marked leaves. To this end, we
consider $-1$ or $1$ as a marked leaf of $\cT$ if
$\#\partial X=1$, or both $-1$ and $1$ as  marked leaves of
$\cT$ if $\#\partial X=2$.

To formulate a corresponding statement, we consider an arbitrary  trivalent tree $T$ (instead of $X$) with a dense set of branch points.  The following 
theorem allows us to construct homeomorphisms between $T$ and $\cT$ that map leaves in a
prescribed way.

\begin{theorem}
  \label{thm:homTpqr}
  Let $T$ be a trivalent metric tree whose branch points are
  dense in $T$. Then $T$ is homeomorphic to $\cT$.
 Moreover, 
  if  $p$, $q$, $r$ are three  distinct leaves of
  $T$, then there exists a homeomorphism $F\: T\ra \cT$ such that
  $F(p)=-1$, $F(q)=1$, $F(r)=i$.
\end{theorem}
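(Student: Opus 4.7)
The first assertion is \cite[Theorem~1.7]{BT}, so only the ``moreover'' statement needs new work. The plan is to cut $T$ and $\cT$ at the respective centers of their three distinguished leaves and build the homeomorphism branchwise.

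First I will identify the center of $p, q, r$: by Lemma~\ref{lem:center}~\ref{item:center1} the intersection $[p,q]\cap[q,r]\cap[r,p]$ is a singleton $\{c\}$. Since each of $p, q, r$ is a leaf, none can lie in the interior of an arc joining the other two (such a point would have at least two branches, contradicting Lemma~\ref{lem:top_T}~\ref{item:top_T1}), so $c$ is distinct from all three. The three half-open arcs $[p,c)$, $[q,c)$, $[r,c)$ are then non-empty, connected, and pairwise disjoint subsets of $T\setminus\{c\}$ and so lie in three distinct components; since $T$ is trivalent, $c$ is a triple point, with branches $B_p$, $B_q$, $B_r$ containing $p$, $q$, $r$ respectively. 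The analogous decomposition of $\cT$ at the triple point $0$ (Lemma~\ref{lem:1tilesCSST}~\ref{item:1tilesCSST1},~\ref{item:1tilesCSST2}) has branches $\cT_1$, $\cT_2$, $\cT_3$ containing $-1$, $1$, $i$ respectively.

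Each branch $B_j$ is itself a trivalent tree with dense branch points: by Lemma~\ref{lem:subt}~\ref{item:subt0} the triple points of $T$ that lie in $\inte(B_j)$ are exactly the interior triple points of $B_j$, and this interior is dense in $B_j$ since $B_j$ is the closure of the open set $\inte(B_j)$. The same is true for each $\cT_k$. The theorem now reduces to the following ``two-leaf'' strengthening of the characterization: for any trivalent trees $S, S'$ with dense branch points, equipped with ordered pairs of distinct leaves $(a,b)\subset S$ and $(a',b')\subset S'$, there exists a homeomorphism $S\to S'$ carrying $a\mapsto a'$ and $b\mapsto b'$. Applied to the three pairs $(B_p;c,p)\to(\cT_1;0,-1)$, $(B_q;c,q)\to(\cT_2;0,1)$, $(B_r;c,r)\to(\cT_3;0,i)$, it produces three homeomorphisms that all send $c$ to $0$; they agree on the pairwise overlaps (each equal to $\{c\}$) and by the pasting lemma combine to a continuous bijection $F\colon T\to\cT$, which is automatically a homeomorphism since $T$ is compact and $\cT$ is Hausdorff.

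The main obstacle is the two-leaf case. My plan for it is an inductive construction along the arcs $[a,b]\subset S$ and $[a',b']\subset S'$, which by hypothesis carry dense sets of triple points. At each such triple point on $[a,b]$ there is a unique ``side branch''---the branch not meeting $[a,b]$---which is itself a trivalent tree with dense branch points by the same subtree argument. Choosing compatible finite collections of triple points on the two arcs and matching their side branches pairwise via the plain characterization \cite[Theorem~1.7]{BT} gives isomorphic finite subdivisions of $S$ and $S'$; refining these and passing to the limit---by topological bookkeeping analogous to, but simpler than, the quasisymmetric scheme of Section~\ref{sec:appr-subd}---produces the required endpoint-preserving homeomorphism. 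Everything outside this inductive step is routine manipulation of metric trees.
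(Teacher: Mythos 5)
The paper does not actually prove this theorem: both assertions are quoted from \cite{BT} (Theorems~1.7 and~5.4 there), so there is no internal argument to compare yours against, and your proposal has to stand on its own. Its first half does: locating the center $c$ of $p,q,r$, checking that $c$ is a triple point distinct from the three leaves, splitting $T$ into the branches $B_p,B_q,B_r$ and $\cT$ into $\cT_1,\cT_2,\cT_3$ (with $i=g_3(1)\in\cT_3$), and gluing three branchwise homeomorphisms along $c\mapsto 0$ is correct, and it is the same cut-and-paste pattern the paper itself uses in the inductive step of Lemma~\ref{lem:decomp}. (Minor quibble: a leaf lying in the interior of an arc does not ``contradict'' Lemma~\ref{lem:top_T}~\ref{item:top_T1}; rather, that lemma shows such a point has at least two branches, which contradicts its being a leaf.)

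The gap is that the ``two-leaf'' lemma to which you reduce everything \emph{is} the theorem, and your treatment of it is a plan rather than a proof. Two concrete points. First, ``matching their side branches pairwise via the plain characterization \cite[Theorem~1.7]{BT}'' cannot produce the pieces of the desired homeomorphism: a homeomorphism $C_i\to C_i'$ obtained from that theorem need not send the cut point $v_i$ to $v_i'$, so the pieces will not glue; every tile (arc-tile or side branch) throws you back onto the one- or two-marked-leaf problem, which is why the construction cannot terminate at a finite stage and must be run as a limit of nested, combinatorially isomorphic, edge-like subdivisions, concluded via \cite[Proposition~2.1]{BT}. You acknowledge the limit, but the obligations deferred as ``routine'' are exactly where the work lies: (a) branch points are dense on every \emph{arc}, not merely in $T$ (a branch-point-free open subarc would be an open subset of $T$, so this is true but needs its own short argument); (b) each tile, with its one or two marked boundary leaves, must admit a further edge-like refinement by triple points chosen compatibly on both sides so that the isomorphism of subdivisions extends level by level; and (c) the meshes of \emph{both} subdivision sequences must tend to zero, which forces you to keep cutting inside every side branch, not just along one spine per tile. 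These are precisely the issues the paper's Lemma~\ref{lem:decomp} and Proposition~\ref{prop:exqs} resolve for the special target $\cT$ (where the explicit similarities $g_w$ make (b) and (c) essentially automatic) and that \cite[Theorem~5.4]{BT} resolves in general. So: right architecture and reduction, but the decisive step is asserted, not proved.
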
 
The first part follows from \cite[Theorem 1.7]{BT} and the second part from \cite[Theorem~5.4]{BT}. Note that $-1$, $1$, $i$ are leaves of $\cT$ (see the discussion after  \cite[Theorem~5.4]{BT}; that $-1$ and $1$ are leaves of $\cT$ was already pointed out in  Lemma~\ref{lem:1tilesCSST}~\ref{item:1tilesCSST2}). 
 If $T$ is as in Theorem~\ref{thm:homTpqr}, then  $T$ is homeomorphic to $\cT$, and so $T$ has at least three leaves (actually infinitely many). The second part of the theorem says  that any three   leaves of $T$ can be sent to the prescribed leaves 
$-1$, $1$, $i$ of the CSST by a homeomorphism between $T$ and $\cT$. We will apply
this in a weaker form where we send  at most two leaves 
of $T$ to $-1$ and $1$.

Now  let $\X$ be an edge-like decomposition of $T$ induced by a
finite set of branch points $\V$ of $T$.  We say that a
homeomorphism $F\: T\ra \cT$ is a {\em tile-homeomorphism} (for
$\X$) if $F(X)\in \cX^*$ (see
\eqref{eq:defcX}) for each $X\in \X$. Then  $F$ maps
tiles in $T$ to tiles in $\cT$, and so the level $\ell(F(X))$ (of $F(X)$ as a tile
of $\cT$)  is defined for each $X\in \X$.  The following statement is a basic existence
result for tile-homeomorphisms.

\begin{lemma}
  \label{lem:decomp}
  Let $T$ be a trivalent metric tree whose branch points are
  dense in $T$. 
  Suppose $\V\sub T$ is a finite set of branch points of $T$ that
  induces an edge-like decomposition of $T$ into the set of
  tiles $\X$. 

  We assume that  either $T$  has no marked leaves, one marked leaf $p\in T$, 
 or two marked leaves  $p,q\in T$, $p\ne q$. If  $\V\ne \emptyset$,
 we also assume that the marked leaf $p$ (when present)  lies in a leaf-tile $P\in \X$, and the other marked leaf $q$ (when present) lies  in a leaf-tile $ Q\in \X$ distinct from $P$. 
  
    Then there exists  a tile-homeomorphism $F\: T\ra \cT$ for $\X$ such that the following statements are true: 
   \begin{enumerate}
  \item 
    \label{item:T_fin_decomp1}
     $F(p) = -1$, or alternatively, $F(p)=1$, when $T$  has  one  marked leaf $p$; or
           $F(p)=-1$ and  $F(q)=1$,
when $T$ has two marked leaves $p$ and $q$.

     \item 
    \label{item:T_fin_decomp2}
    If $T$ has one marked leaf $p$ and $\#\V\geq  2$, 
    then we may also  assume that $F$ satisfies $\ell(F(P))=2$.
    
      \item 
    \label{item:T_fin_decomp3}
    If $T$ has two  marked leaves $p,q\in T$ and $[p,q]\cap \V$
    contains at least three  points, then we may also  assume that
    $F$ satisfies $\ell(F(P))=\ell(F(Q)=2$.

        \item
    \label{item:T_fin_decomp4}
    For each  $X\in \X$ we have $ \ell(F(X)) \leq \#\V$,       
   and,  if $\V\neq \emptyset$, also   
$\ell(F(X)) \geq 1$.
  \end{enumerate}
\end{lemma}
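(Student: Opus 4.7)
The plan is to proceed by induction on $\#\V$. For the base case $\V = \emptyset$, we have $\X = \{T\}$, and we simply apply Theorem~\ref{thm:homTpqr} to obtain a homeomorphism $F\colon T \to \cT$ sending the prescribed marked leaves (zero, one, or two of them) to the required targets among $\{-1,1\}$; a third leaf of $T$ is sent to $i$ to satisfy the three-leaf hypothesis of the theorem. Conclusions (i)--(iv) hold trivially or vacuously in this case.

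For the inductive step with $\#\V \geq 1$, the basic strategy is to choose a branch point $v \in \V$, split $T$ into the three branches $B_1, B_2, B_3$ of $v$, and glue recursively by $F|_{B_k} \coloneqq g_k \circ F_k$, where each $F_k\colon B_k \to \cT$ comes from the inductive hypothesis applied to $(B_k, \V_k)$ with $\V_k \coloneqq \V \cap \inte(B_k)$ and marked leaves consisting of $v$ together with any original marked leaves of $T$ in $B_k$. The set $\V_k$ consists of branch points of $B_k$ by Lemma~\ref{lem:subt}~\ref{item:subt0}, the tree $B_k$ is itself trivalent with dense branch points by Lemma~\ref{lem:vt}~\ref{item:vt4}, the induced decomposition $\X_k$ coincides with $\{X \in \X : X \subset B_k\}$ and is edge-like by Lemma~\ref{lem:vtt}~\ref{item:vtt3}, and $v$ lies in a leaf-tile of $\X_k$ whenever $\V_k \ne \emptyset$. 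Consistency of the gluing at $v$ is automatic from \eqref{eq:gk12}, which forces $F_1(v) = 1$ and $F_2(v) = F_3(v) = -1$ in the recursion. Condition (iv) follows from $\ell(F(X)) = 1 + \ell(F_k(X)) \leq 1 + \#\V_k \leq \#\V$, and (i) is arranged by labeling the branch containing the marked leaf $p$ (target $-1$) as $B_1$ and the one containing $q$ (target $1$) as $B_2$, using $g_1(-1) = -1$ and $g_2(1) = 1$.

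The main difficulty lies in enforcing the level-$2$ conditions (ii) and (iii). For (ii) we depart from the standard split and instead split $T$ at $u \coloneqq \partial P$ itself; by Lemma~\ref{lem:vt}~\ref{item:vt7} the branch $B_1$ of $u$ containing $P$ satisfies $B_1 = P$ and thus $\V_1 = \emptyset$. Since $\#\V \geq 2$, at least one of $\V_2, \V_3$ is nonempty; say $\V_3 \ne \emptyset$. Let $E \in \X$ be the unique tile in $B_3$ adjacent to $u$. Its boundary in $T$ cannot be only $\{u\}$---otherwise Lemma~\ref{lem:vt}~\ref{item:vt7} would give $E = B_3$, forcing $\V_3 = \emptyset$---so $\partial E = \{u, v'\}$ for some $v' \in \V_3$, and $E$ is a branch of $v'$ in $B_3$ by Lemma~\ref{lem:vt}~\ref{item:vt7}; by Lemma~\ref{lem:subt}~\ref{item:subt2} the remaining two branches $D_1, D_2$ of $v'$ in $B_3$ are also branches of $v'$ in $T$. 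We then build $F$ piecewise by
\begin{equation*}
  F|_P = g_{11} \circ \tilde F_P, \quad
  F|_{B_2} = g_{13} \circ \tilde F_{B_2}, \quad
  F|_E = g_{12} \circ \tilde F_E, \quad
  F|_{D_k} = g_{k+1} \circ \tilde F_{D_k} \;\, (k=1,2),
\end{equation*}
where $\tilde F_P$ and $\tilde F_E$ come from Theorem~\ref{thm:homTpqr} (sending $p \to -1$, $u \to 1$, and $u \to -1$, $v' \to 1$, respectively), and $\tilde F_{B_2}, \tilde F_{D_1}, \tilde F_{D_2}$ come from the inductive hypothesis applied to the smaller trees. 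The five pieces agree at $u$ (all sending it to $g_1(0) = -1/2$) and at $v'$ (all sending it to $0$) by~\eqref{eq:gk12} together with the identities $g_{11}(1) = g_{12}(-1) = g_{13}(-1) = -1/2$; the resulting $F$ is a tile-homeomorphism with $F(P) = g_{11}(\cT) = \cT_{11}$, giving $\ell(F(P)) = 2$, and the level bound (iv) is then verified using $\#\V = 1 + \#\V_2 + \#\V_3$ together with $\#\V_3 \geq 1$. Condition (iii) is handled analogously: the hypothesis $\#([p,q] \cap \V) \geq 3$ produces a $v \in (u_P, u_Q) \cap \V$, and after splitting at~$v$ the (ii)-style piecewise construction is applied symmetrically on both sides, targeting $\cT_{11}$ for $P$ on the $B_1$ side and $\cT_{22}$ for $Q$ on the $B_2$ side.

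The hardest part will be the bookkeeping in this piecewise construction: verifying that the five maps $g_{11}, g_{12}, g_{13}, g_2, g_3$ glue consistently at the shared boundary points $u$ and $v'$, that each piece is a tile-homeomorphism onto its target sub-tile of $\cT$, that the images cover all of $\cT$ (using $\cT_{11} \cup \cT_{12} \cup \cT_{13} = \cT_1$ and $\cT = \cT_1 \cup \cT_2 \cup \cT_3$), and that the level bound (iv) is preserved throughout. The key combinatorial inputs are Lemma~\ref{lem:2tileCSST} (which pins down $\cT_{11}$ and $\cT_{22}$ as the unique 2-tiles with the relevant leaf and boundary), the explicit identification of the three branches of the branch point $-1/2 \in \cT$ as $\cT_{11}$, $\cT_{13}$, and $\cT_{12} \cup \cT_2 \cup \cT_3$, and the relations~\eqref{eq:gk12} for the branch point $0 \in \cT$.
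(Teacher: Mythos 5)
Your proposal is correct in substance, and for the delicate parts \ref{item:T_fin_decomp2} and \ref{item:T_fin_decomp3} it takes a genuinely different route from the paper. For \ref{item:T_fin_decomp1} you follow essentially the paper's recursion (split at $x\in\V$, glue $g_k\circ F_k$ on the branches), and your inductive level count for \ref{item:T_fin_decomp4} replaces the paper's appeal to Lemma~\ref{lem:admh} applied to $F(\V)$; both work. For \ref{item:T_fin_decomp2} and \ref{item:T_fin_decomp3} the paper stays entirely inside that recursion: it splits at a vertex $x$ with $p'\in(p,x)$, where $\partial P=\{p'\}$, exploits the freedom to prescribe which vertex of $B_1$ is sent to $0$ so that $F(p')=g_1(0)=-1/2$, and then identifies $F(P)=\cT_{11}$ via the uniqueness statement of Lemma~\ref{lem:2tileCSST}. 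You instead split at $u\in\partial P$ itself and hard-wire the five pieces $P,B_2,E,D_1,D_2$ onto $\cT_{11},\cT_{13},\cT_{12},\cT_2,\cT_3$; your gluing identities at $u$ and $v'$ are correct, the images have the same intersection pattern as the domain pieces (the three branches of $-1/2$ and, inside the third one, the three branches of $0$), and $F(P)=\cT_{11}$ holds by construction, so Lemma~\ref{lem:2tileCSST} is not needed at all. The price is heavier bookkeeping; the gain is that you avoid the paper's ``remark'' about normalizing the second-level recursive call. Two points need care in the write-up. First, in \ref{item:T_fin_decomp3} the phrase ``applied symmetrically on both sides'' must mean one global decomposition of $T$ at the three vertices $u_P$, $v$, $u_Q$ (sent to $-1/2$, $0$, $1/2$), with the seven resulting pieces mapped to $\cT_{11},\cT_{13},\cT_{12},\cT_3,\cT_{21},\cT_{23},\cT_{22}$; if you instead ran the \ref{item:T_fin_decomp2} construction inside $B_1$ and post-composed with $g_1$ as in the recursion, you would get $\ell(F(P))=3$. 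Moreover, the two middle pieces between $u_P$ and $v$ and between $v$ and $u_Q$ are in general unions of many tiles, unlike the single tile $E$ in your \ref{item:T_fin_decomp2} argument, so they require the inductive hypothesis with two marked leaves (including the check that those leaves lie in distinct leaf-tiles of the induced decomposition), not merely Theorem~\ref{thm:homTpqr}. Second, record the symmetric variant of \ref{item:T_fin_decomp2} with target $\cT_{22}$ for the alternative normalization $F(p)=1$.
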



If we are in the situation of \ref{item:T_fin_decomp2} or
\ref{item:T_fin_decomp3}, the homeomorphism $F$ sends the
leaf-tiles containing the marked leaves of $T$ to $2$-tiles in
$\cT$.  Actually, in \ref{item:T_fin_decomp2} we have
$F(P)=\cT_{11}$ or $F(P)=\cT_{22}$ depending on whether $F(p)=-1$
or $F(p)=1$, and in \ref{item:T_fin_decomp3} we have
$F(P)=\cT_{11}$ and $F(Q)=\cT_{22}$.  It is important that these
images of $P$ and $Q$ have fixed diameter $1/2$.  This (seemingly technical)
condition will be crucial to ensure that the subdivision of $\cT$
given in Proposition~\ref{prop:exqs} is quasi-visual. In
particular, we use it to show that neighboring tiles have
comparable diameter, as required by
Definition~\ref{def:qv_approx}~\ref{item:qv_approx1}. This is the
reason why we constructed decompositions of $T$ satisfying the
corresponding conditions \ref{item:decomp_Vfinite} and
\ref{item:decomp4} in Proposition~\ref{prop:decomp}.

The tile-homeomorphism  $F\colon T\to \cT$ in Lemma~\ref{lem:decomp} is merely a convenient device that allows us to transfer  information on leaves and tiles  from $T$ to $\cT$.
The main   point is
that we can find a decomposition of 
 $\cT$ into tiles (in $\cX^*$) 
isomorphic to the  given edge-like decomposition of $T$.

\begin{proof}
  [Proof of Lemma~\ref{lem:decomp}]
  \ref{item:T_fin_decomp1} Depending on the number of marked leaves of $T$ and the desired normalization for $F$, we distinguish several cases in the ensuing discussion:
  
\smallskip
{\em Case 1:} $T$ has no marked leaf.
 
\smallskip
{\em Case 2a:} $T$ has one  marked leaf $p$, and the desired normalization is $F(p)=-1$.

\smallskip
{\em Case 2b:} $T$ has one  marked leaf $p$, and the desired  normalization is $F(p)=1$.

 \smallskip
{\em Case 3:} $T$ has two  marked leaves $p$ and $q$,  and the desired  normalization is $F(p)=-1$ and $F(q)=1$.

\smallskip

We now prove the statement  by induction on 
$\#\V\ge 0$. In the proof, Case 1 is the easiest  to handle, since we do not have to worry about normalizations, but  will have to make some careful choices 
in Cases 2a, 2b, 3 due to the presence of marked leaves.

Suppose first that $\#\V=0$, and so $\V=\emptyset$. We invoke
Theorem~\ref{thm:homTpqr} to obtain a homeomorphism
$F\: T\ra \cT$, where we can impose the normalization 
 $F(p)=-1$ or  $F(p)=1$ when $T$ has one marked leaf $p$ (Cases 2a and 2b),
 or the normalization $F(p)=-1$ and $F(q)=1$ when $T$ has two  marked leaves $p$
 and $q$ (Case 3). 
 
  Then
$F$ is a tile-homeomorphism for $\X$, since there is only one
tile $X=T$; it is 
is mapped to $F(T)=\cT=g_\emptyset(\cT)$, which is a tile of
$\cT$.

For the inductive step suppose $\#\V\ge 1$ and that the statement
is true for all marked trees homeomorphic to $\cT$ with  decompositions induced by sets of branch points with
fewer than $\#\V$ elements.


Since $\#\V\ge 1$, we have $\V\ne \emptyset$. We choose  a (branch)
point $x\in \V$. If $T$ has marked leaves, then they are distinct from the branch point $x$. Moreover,  if  $T$ is marked by two leaves  $p$ and $q$ (Case 3), we may 
assume in addition that they lie in distinct components of
$T\setminus \{x\}$. Indeed, we may then choose any point
$x\in (p,q)\cap \V$. The latter set  is non-empty as follows from the fact that 
  $p$ and $q$ are contained in  distinct leaf-tiles  $P,Q\in \X$
   by our assumptions.

Let $B_1$, $B_2$, $B_3$ be the branches of $x$ in $T$. Here we may
assume that $p\in B_1$ in Case 2a, $p\in B_2$ in Case 2b, and $p\in B_1$, $q\in B_2$ in Case 3. Note that these branches are 
precisely the tiles in the decomposition of $T$ induced by 
$\{x\}$. 

The simple idea for the proof is now to apply the induction
hypothesis to each of the trees $B_k$ with suitable markings
and copy the  image of the resulting homeomorphism into the
subtree $\cT_k$ by the map $g_k$ (as in \eqref{eq:def_fj}). If we assemble these maps into
one, we obtain the desired homeomorphism $F$.

First, note that Lemma~\ref{lem:vt}~\ref{item:vt4} implies that each branch $B_k$ is also a trivalent metric
tree with a dense set of branch points. The (possibly empty) set
\begin{equation*}
  \V_k
  \coloneqq \V\cap \inte(B_k)=
  \V\cap (B_k\setminus \{x\})
\end{equation*}
consists of branch points of $B_k$, and $\#\V_k < \#\V$ (since
$\V_k \subset \V\setminus\{x\}$). 

 The set $\V_k$ induces a
 decomposition of $B_k$. Its set of tiles $\X_k$
consists precisely of the tiles $X\in \X$ with $X\sub B_k$
(see Lemma~\ref{lem:vtt}~\ref{item:vtt3}).
We denote 
the relative boundary of $X\in
\X_k$ in $B_k$ by $\partial_ {k} X$. Then, using Lemma~\ref{lem:vtt}~\ref{item:vtt4}, we have 
\begin{equation}\label{eq:relbdX}
 \partial_ {k} X=\partial X\setminus \{x\}, 
\end{equation}
 and in particular, 
$\# \partial_ {k} X\le \#\partial X\le 2$ for each $X\in \X_k$. This implies that 
$\X_k$ is an edge-like decomposition of $B_k$.

Since $x\in \partial B_k$, this point is  a leaf of $B_k$ by  Lemma~\ref{lem:vt}~\ref{item:vt3}. Moreover, each marked leaf of $T$ (when present) is a leaf of the unique branch $B_k$ that contains it, as follows from Lemma~\ref{lem:subt}~\ref{item:subt1}. We now 
mark: 
%
%
\begin{align*}
  &\text{$B_1$ by $x$ and, in addition, by $p$ in Cases 2a and 3; }
\\
 &\text{$B_2$ by $x$ and, in addition, by $p$ in Case 2b, and by $q$ in 
Case 3;} 
\\
  &\text{$B_3$ by $x$.}
\end{align*}

In other words, each branch $B_k$ is marked by its leaf $x$ and in addition
by any  marked  leaf of $T$ that $B_k$ may contain.

\smallskip
\emph{Claim.} Let $k\in \{1,2,3\}$ and suppose $B_k$ is marked by 
 its leaf $x$ and possibly one other of its leaves as indicated. 
 If $\V_k\ne \emptyset$, then  $x$ lies in a leaf-tile $X\in \X_k$. If $B_k$ has another marked leaf, then this leaf lies in a leaf-tile $Y\in \X_k$ distinct from $X$.
\smallskip

In other words, each marked branch $B_k$ with the decomposition $\X_k$ induced by $\V_k$ satisfies the hypotheses of the lemma 
and we can  apply the induction hypothesis. 

To prove the claim, first observe that there exist precisely three distinct tiles $X_1,X_2,X_3\in\X$ that contain the triple point $x$.
 With suitable labeling we have  $X_k\sub B_k$ for $k=1,2,3$ 
(see Lemma~\ref{lem:vt}~\ref{item:vt6}).

Fix $k\in \{1,2,3\}$ and suppose $\V_k\ne \emptyset$ as in the Claim. We  now set $X\coloneqq X_k\in \X_k$. 
Then  $\partial X\setminus\{x\}=\partial_{k}X\ne \emptyset$ by
  \eqref{eq:relbdX} and Lemma~\ref{lem:vt}~\ref{item:vt3}. On the other hand, $x\in \partial X$ by  Lemma~\ref{lem:vt}~\ref{item:vt5}, and 
 $\#\partial  X\le 2$, because $\X$ is an edge-like decomposition of $T$. We conclude that $\partial X$ contains precisely two distinct points,
 one of which is $x$. So $X$ is an edge-tile in $\X$. Then 
  $\partial_{k}X=\partial X\setminus\{x\}$ is a singleton set, and so  $X$ is a leaf-tile in $\X_k$ that contains $x$. This shows the first part of the Claim.
  
  Suppose $B_k$ has another marked leaf besides $x$. This is only possible if $k\in \{1,2\}$. We assume $k=1$; the case $k=2$ is very similar and we skip the  details. 
   
Then $p$ is a marked leaf of $B_k=B_1$. By our assumptions, 
$p$ is contained in a leaf tile $P\in \X$. By
 Lemma~\ref{lem:vt}~\ref{item:vt1} the tile $P$ is contained in precisely one of the branches of $x$ in $T$. This branch must be 
 $B_1$, because $p\in P$ is belongs to $B_1$, but not to $B_2$ or $B_3$. We conclude that  $P\sub B_1$ and so $P\in \X_1$. 
 
 Since $\partial_{k}P\ne \emptyset$ and
 $\#\partial_{k}P\le \#\partial P=1$, the set $\partial_{k}P$
 consists of a single point.  This means $P$ is a leaf-tile in
 $\X_1$ that contains $p$. We have $P\ne X$, because $X$ is an
 edge-tile and $P$ is a leaf-tile in $\X$.  So if we set $Y=P$, then we see that the second part of the  Claim is also true.
 \smallskip

By the Claim we can  apply the induction hypothesis to $B_k$, marked by
leaves as above, and its edge-like  decomposition $\X_k$ induced by $\V_k$ (recall that $\#\V_k<\#\V$). Then we can find 
tile-homeomorphisms $F_k\: B_k\ra \cT$ for $\X_k$ that are
normalized by
\begin{align*}
  F_1(x)&=\phantom{-}1,
       \text{ and } F_1(p)=-1  \text{ in Cases 2a and 3}, 
          \\
   F_2(x)&=-1,
           \text{ and } F_2(p)=\phantom{-}
           1 \text{ in Case 2b, }   F_2(q)=1 
         \text{ in Case 3, }
           \\
  F_3(x)&=-1.
\end{align*}
Recall that these cases were defined at the beginning of the
proof. They refer to the markings of $T$ (and not of
$B_k$). Case~1 is included here, because then the statements about $p$ and $q$ are void.

We now define 
\begin{equation}
  \label{eq:defF}
  F\: T=B_1\cup B_2 \cup B_3\ra \cT
  \text{ by setting } 
  F(z)=g_k(F_k(z))
\end{equation}
if $z\in B_k$ for $k\in \{1,2,3\}$.
This is well-defined, because $z=x$ is the only point contained
in more than one branch and for which multiple definitions apply;
but we have 
\begin{align*}
  g_1(F_1(x))&=g_1(1)\phantom{-}=0, \\
  g_2(F_2(x))& =g_2(-1)=0, \\
  g_3(F_3(x))&=g_3(-1)=0.  
\end{align*} 
So the definitions are consistent and $F(x)=0$. 

The map $F$ sends $T$ onto
$\cT$, because
\begin{equation*}
  F(T)
  =
  \bigcup_{k=1}^3 g_k(F_k(B_k))
  =
  \bigcup_{k=1}^3 g_k(\cT)=\cT. 
\end{equation*}

Since $F|B_k=g_k\circ F_k$ is continuous for $k=1,2,3$, the map
$F$ is continuous. On each set $B_k$ the map $F|B_k=g_k\circ F_k$
is injective. Moreover, $F$ sends the pairwise disjoint sets
\begin{equation*}
  \{x\}, \,
  B_1\setminus\{x\},\,
  B_2\setminus\{x\},\,
  B_3\setminus\{x\}
\end{equation*}
onto the pairwise disjoint sets
\begin{equation*}
  \{0\},\,
  \cT_1\setminus\{0\},\,
  \cT_2\setminus\{0\},\,
  \cT_3\setminus\{0\}.
\end{equation*}
This implies that $F$ is injective, and hence a continuous
bijection $F\: T\ra \cT$. Since $T$ is compact, $F$ is actually a
homeomorphism. It has the desired normalization, because
\begin{align*}
  F(p)&=g_1(F_1(p))=g_1(-1)=-1
          \text{ in Cases 2a and 3},
  \\
   F(p)&=g_2(F_2(p))=g_2(1)=1
                 \text{ in Case 2b},
  \\
F(q)&=g_2(F_2(q))=g_2(1)=1
                  \text{ in Case 3}.
\end{align*}
   
It remains to show that $F$ is a tile-homeomorphism for $\X$.
Let $X\in \X$ be arbitrary. Then $X$ is contained in one of the
branches $B_k$, and so is a tile in $\X_k$.
Since $F_k$ is a tile-homeomorphism for $\X_k$, we know that
$F_k(X)$ is a tile of $\cT$. This implies that
$F(X)=g_k(F_k(X))$ is a tile of $\cT$, because $g_k$ sends tiles
of $\cT$ to tiles of $\cT$.

This completes the inductive step and the statement follows. The
argument actually shows that the tile-homeomorphism
$F\:T \ra \cT$ with the desired normalization can be constructed
so that $F(x)=0$ with  $x\in \V$ arbitrary,  when $T$ has  one  marked leaf, or 
 with $x\in (p,q)\cap \V$ arbitrary, when $T$ has two marked leaves $p$ and $q$.
    
\smallskip
We now first establish     \ref{item:T_fin_decomp3}, before we show  \ref{item:T_fin_decomp2}.
     
\smallskip
\ref{item:T_fin_decomp3}
Let $p'$ and $q'$ be the first, respectively the last, point on
$(p,q) \cap \V$ as we travel from $p$ to $q$ along $[p,q]$. Then 
 $[p,p']$ is contained in a tile in $\X$; this tile must be $P$, because it is the only tile in $\X$ that contains $p$ (this follows from Lemma~\ref{lem:vt}~\ref{item:vt5}). We see that $p'\in \V  \cap P= \partial P$. Since $P$ is a leaf-tile, we have $\partial P=\{p'\}$. Similarly, $\partial Q=\{q'\}$.
 
There exists
a point $x\in (p',q')\cap \V$ by our assumptions. As in the inductive
step in \ref{item:T_fin_decomp1}, we decompose $T$ into the three
branches $B_1$, $B_1$, $B_3$ with this choice of $x$. Again we
may assume $p\in B_1$ and $q\in B_2$. Note that then
$p'\in (p,x)\sub B_1$, and so
$p'\in \V_1=\V\cap (B_1\setminus\{x\})$. Similarly, $q'\in (x,q)$
and $q'\in \V_2=\V\cap (B_2\setminus\{x\})$. We now choose
tile-homeomorphisms $F_k\:B_k\ra \cT$ for $\X_k$ as before, but
by the remark at the end of the proof of
\ref{item:T_fin_decomp1}, we can do this so that  $F_1(p')=0$
and $F_2(q')=0$. Then the map $F$ as defined in \eqref{eq:defF}
satisfies
\begin{align*}
  F(p')
  &=
    g_1(F_1(p'))
    =
    g_1(0)=-1/2, 
  \\
  F(q')
  &=
    g_2(F_2(q'))
    =
    g_2(0)=1/2. 
 \end{align*}
 Since $p\in P$, $p'\in \partial P$, and  $F$ is a
  tile-homeomorphism,  we have 
   $Z\coloneqq F(P)\in \cX^*$, $-1=F(p)\in Z$,  and  
   $-1/2= F(p')\in \partial Z$. Lemma~\ref{lem:2tileCSST} 
   implies that $Z=F(P)=\cT_{11}$, which is a tile of 
 level $2$. By a similar reasoning, $F(Q)=\cT_{22}$, which is again of level $2$. The statement follows.

 \smallskip
 \ref{item:T_fin_decomp2}
 This is a slight variant of the argument  for
 \ref{item:T_fin_decomp3}. The leaf-tile  $P\in X$ with  $p\in P$ is the 
 unique tile in $\X$ that contains $p$. It boundary $\partial P\sub \V$ 
 consists of a single point $p'\in \V$. Now we consider the three
 tiles containing $p'\in \V$. One of them is $P$, but not all
 three can be leaf-tiles, because then $\#\V=1$. So there exists
 an edge-tile $X$ with $p'\in \partial X$. Then $\partial X\sub \V$
 contains another point $x\in \V$ distinct from $p'$. As one
 travels along the arc $[p,x]$ starting from $p$, one exits $P$,
 but this is only possible through $p'$ which is the only
 boundary point of $P$. Hence $p'\in (p,x)$. We now use a
 construction as in \ref{item:T_fin_decomp1} for the branch point
 $x\in \V$ and its three branches $B_1$, $B_2$, $B_3$.
 
  To obtain the first normalization, we may assume that
  $p\in B_1$.  Since $p'\in (p,x)$, we can choose the
  homeomorphism $F_1\: B_1\ra \cT$ so that $F_1(p)=-1$ and
  $F_1(p')=0$. Then, as before, we obtain a tile-homeomorphism
  $F\: T\ra \cT$ for $\X$. It satisfies 
  \begin{align*}
    F(p)
    &=
      g_1(F_1(p))=g_1(-1)=-1,
    \\
    F(p')
    & =
      g_1(F_1(p'))
      =
      g_1(0)
      =
      -1/2.
  \end{align*}
  Using again Lemma~\ref{lem:2tileCSST}, we see  that
  $F(P) = \cT_{11}$, and so $F(P)$  is a tile in $\cT$ of level $2$.
 
  To obtain the second normalization, we may assume that
  $p\in B_2$ and now choose $F_2\: B_2\ra \cT$ so that $F_2(p)=1$
  and $F_2(p')=0$. Then
  \begin{align*}
    F(p)
    &=
      g_2(F_2(p))
      =
      g_2(1)
      =
      1,
    \\
    F(p')
    & =
      g_2(F_2(p'))
      =
      g_2(0)
      =
      1/2.
  \end{align*}
  From Lemma~\ref{lem:2tileCSST} we see that $F(P)= \cT_{22}$, which is a
  tile of level $2$. We have proved \ref{item:T_fin_decomp2}.

  \smallskip
  \ref{item:qv_approx4}
  Let $\cV\coloneqq F(\V)\subset \cT$. Since $F\: T\ra \cT$  is a
 homeomorphism, $\cV$ is a set of branch points of $\cT$ 
 that  induces a  decomposition of $\cT$ into the set of tiles 
  $\cX=F(\X)$. Since $F$ is a tile-homeomorphism, we 
  have $\cX\sub \cX^*$. The statement now immediately
   follows  from
  Lemma~\ref{lem:admh}.

\smallskip
The proof is complete.
\end{proof}

\subsection*{Subdividing  \texorpdfstring{$\cT$}{T}}
\label{sec:subdividing-ct}
After these preparations, we are now ready to prove Proposition~\ref{prop:exqs}, the main result
in this section.

The desired quasi-visual subdivision $\{\Y^n\}$ will be
constructed inductively from auxiliary tile-homeomorphisms $F^n\:
T\ra \cT$ for $\X^n$, $n\in \N_0$. We then set 
\begin{equation*}
  \Y^n\coloneqq F^n(\X^n) =
  \{F^n(X) : X\in \X^n\}. 
\end{equation*}
Recall that if $F^n$ is a tile-homeomorphism, then  for each tile  $X\in \X^n$ we have
$F^n(X)\in \cX^*$, and so $\Y^n\subset \cX^*$ (meaning that $\Y^n$ is a set
of tiles in $\cT$).

 The next map $F^{n+1}\: T\ra \cT$ ``refines'' $F^n$ in the sense that for $X\in \X^n$ we have 
$F^{n+1}(X)=F^n(X)$. Thus, for  $X\in \X^n$  and $X'\in
\X^{n+1}$ with $X'\sub X$, we have $F^{n+1}(X')\sub
F^{n+1}(X)=F^n(X)$ in $\cT$. This will ensure that $\{\X^n\}$ and
$\{\Y^n\}$ are isomorphic subdivisions according to
\eqref{eq:isosubdiv_cap} and \eqref{eq:isosubdiv_incl}. 

We will  show that 
$\{\Y^n\}$ is actually  a quasi-visual subdivision
of $\cT$. Then by Proposition~\ref{prop:qv_f_qs}
there exists  a quasisymmetry $F\colon T\to \cT$  
that induces the isomorphism between $\{\X^n\}$ and 
$\{\Y^n\}$. In fact, one can  show that the sequence $\{F^n\}$
converges uniformly to this quasisymmetry $F$;
 we will leave the easy argument for this to the reader.

Note that for a given tile $X\in \X^n$, the
level of $F^n(X)\in \cX^*$, i.e., $\ell(F^n(X))$, will in general be
different from $n$ (the level of $X$ in $T$). Moreover, if $X,Y\in \X^n$, the
levels of the tiles $F(X)$ and $F(Y)$ (in $\cT$) may be
different.

\begin{proof}
  [Proof of Proposition~\ref{prop:exqs}]
 By our assumptions $T$ is a uniformly branching trivalent
quasiconformal tree, and  the set  $\V^n$ for $n\in \N_0$  is  as in \eqref{eq:defvn} 
with $\delta\in (0,1)$  so small that 
all the statements  
in Proposition~\ref{prop:decomp} are true for the edge-like decompositions
$\X^n$  of $T$ induced by $\V^n$.  In particular, 
$\V^0=\emptyset$, and the sets $\V^n$ form an increasing sequence of branch points as in \eqref{eq:Vn_incrs}. The sequence $\{\X^n\}$ is a subdivision of $T$. Condition~\ref{item:decomp_Vfinite} in 
Proposition~\ref{prop:decomp} implies that $\V^n\ne \emptyset$ 
for $n\in \N$. 

For $n\in \N_0$ we will now 
 inductively  construct homeomorphisms
$F^n\: T\ra \cT$ with the following properties:

\begin{enumerate}[label=(\Alph*)]
\item 
  \label{item:Fn1}
  For each $n\in \N_0$ the map $F^n\:T\ra\cT$ is a tile-homeomorphism for $\X^n$, i.e., $F^n$ is a homeomorphism from  $T$ onto $\cT$ such that $F^n(X)\in \cX^*$ for 
  $X\in \X^n$.
  %
  \setcounter{mylistnum}{\value{enumi}}
\end{enumerate}
In addition, for all  $n\in \N$ and $X\in \X^{n-1}$, the following statements are true:
\begin{enumerate}[label=(\Alph*)]
  \setcounter{enumi}{\value{mylistnum}}
  
\item 
  \label{item:Fn2}    
  The maps $F^{n-1}$ and $F^{n}$ are compatible in
  the sense that
  \begin{equation*}
    F^{n-1}(X)=F^{n}(X).
  \end{equation*}
  %
\item 
  \label{item:Fn3}
  There exists a constant $N\in \N$ independent of $n$ and $X$ with the following property:
  if $Y \in \X^{n}$ with $Y\sub X$, then
  \begin{equation*}
    \ell(F^{n-1}(X))+1
    \le
    \ell(F^{n}(Y))
    \le
    \ell(F^{n-1}(X))+N.
  \end{equation*}
\item
  \label{item:Fn4}
  If $Y\in \X^n$ with $Y\subset X$
  and $Y\cap \partial X\ne \emptyset$, then
  \begin{equation*}
    \ell(F^{n}(Y))
    =
    \ell(F^{n-1}(X))+2. 
  \end{equation*}
\end{enumerate}

The constant $N\in\N$ in \ref{item:Fn3} will be the same as in 
statement \ref{item:decomp3}
in Proposition~\ref{prop:decomp}, which is true by our
assumptions. Statement~\ref{item:Fn3}  means that the level of the tile 
$Y'=F^n(Y)\in \cX^*$ is strictly, but only by a bounded amount
larger than the level of $X'=F^{n-1}(X)\in \cX^*$. In
particular, $2^{-N}\diam(X')\leq \diam(Y') \leq \frac12
\diam(X')$. In \ref{item:Fn4} we actually have
$\diam(Y') = \frac14 \diam(X')$.

%

\smallskip 
{\em Construction of $F^0$ and $F^1$.}
Since $\V^0=\emptyset$, we have $\X^0=\{T\}$. We know that $T$ is
homeomorphic to $\cT$ (by Theorem~\ref{thm:homTpqr}) and so we
can choose a homeomorphism $F^0\: T\ra \cT$. Obviously, $F^0$ is
a tile-homeomorphism for $\X^0$, since
$F^0(T)=\cT=\cT_{\emptyset}$ is a tile of $\cT$.

Note that $\V_T=\V^1 \cap\inte(T)= \V^1$ and the decomposition $\X^1$ of
$T$ induced by $\V^1$ is edge-like according to
Proposition~\ref{prop:decomp}~\ref{item:decomp1}.
 
So we may apply Lemma~\ref{lem:decomp} to $T$
and $\V^1$, where no leaves of $T$ are marked. This gives us a
tile-homeomorphism $F^1\: T\ra \cT$ for $\X^1$. Then
\ref{item:Fn1} is true for
$n=0$ and $n=1$. To verify  \ref{item:Fn2}--\ref{item:Fn4} for $n=1$ note that $X=T\in \X^0$ is the only $0$-tile. 
Since $F^0(T)=\cT=F^1(T)$, we obviously have   \ref{item:Fn2}.

Recall that $\V^1\neq \emptyset$ and let $N\in \N$ be the number
from Proposition~\ref{prop:decomp}~\ref{item:decomp3}. By
definition of $N$ we then have $\#\V^1\le N$ for the number of
$1$-vertices in the $0$-tile $T$.  Together with
Lemma~\ref{lem:decomp}~\ref{item:T_fin_decomp4} this implies
\begin{equation*}
  1
  \leq
  \ell(F^1(Y)) \leq \#\V^1\leq N
\end{equation*}
for all $Y\in \X^1$. Since $\ell(F^0(X))=\ell(\cT)=0$,  \ref{item:Fn3}
holds for $n=1$. 
Moreover, \ref{item:Fn4} is vacuously true for $n=1$, since
$\partial T=\emptyset$. Thus, with the maps $F^0$ and $F^1$ as above,
conditions  \ref{item:Fn1}--\ref{item:Fn4} are true for
$n=1$.

\smallskip
For the inductive step, suppose  that for some $n\in \N$ maps
$F^{n-1}$ and $F^n$  with the properties
\ref{item:Fn1}--\ref{item:Fn4} have been defined. The map
$F^{n+1}$ will be constructed separately on each tile
$X\in \X^n$. The idea is to modify the map $F^n|X$ to a
homeomorphism $F_X$ of $X$ onto the same image $F^n(X)$, and
``glue'' the maps $\{F_X\}$, ${X\in \X^n}$, together to obtain
$F^{n+1}$.

\smallskip
\emph{Constructing $F^{n+1}$ on an $n$-tile $X$.}
Fix a tile $X\in \X^n$. Define
$\V_X\coloneqq \V^{n+1}\cap \inte(X)$. Then $2\le \#\V_X\le N$
and $\V_X$ induces an edge-like decomposition of $X$ into tiles,
denoted by $\X_X$ (see \ref{item:decomp1}, \ref{item:decomp3},
and \ref{item:decomp_Vfinite} in 
Proposition~\ref{prop:decomp}). The set $\X_X$
consists precisely of the tiles $Y\in \X^{n+1}$ with $Y\sub
X$ (see Lemma~\ref{lem:vtt}~\ref{item:vtt3}). We want to construct a homeomorphism $F_X\: X\ra F^n(X)$ with
the following properties:
\begin{enumerate}[label=(\alph*)]
\item
  \label{item:FX1}
  The set $F_X(Y)$ is a tile of $\cT$ for each $Y\in \X_X$.
\item
  \label{item:FX2}
  $F_X(X)=F^n(X)$ and 
  $F_X|\partial X=F^n|\partial X$.
\item
  \label{item:FX3}
  For each $Y\in \X_X$, we have
  \begin{equation*}
    \ell(F^n(X)) +1
    \le
    \ell (F_X(Y) )
    \le
    \ell (F^n(X)) +N. 
  \end{equation*}
\item
  \label{item:FX4}
  If $Y\in \X_X$ with $Y\cap \partial X\ne \emptyset$, then
  \begin{equation*}
    \ell(F_X(Y))=\ell (F^n(X))+2.  
  \end{equation*}
\end{enumerate}


Note that the properties \ref{item:FX1}--\ref{item:FX4}
correspond to the properties \ref{item:Fn1}--\ref{item:Fn4}. The
construction of $F_X$ is slightly different depending on whether
$X$ is an edge-tile or a leaf-tile.
We will discuss the details
when $X$ is an edge-tile and will comment on the modifications
when it is a leaf-tile.

\smallskip
\emph{Case~1:} $X$ is an edge-tile.
Then $\#\partial X =2$, say $\partial X=\{p,q\}\sub \V^n$. Since
$F^n$ is a tile-homeomorphism for $\X^n$, the set $F^n(X)$ is a
tile of $\cT$, say $F^n(X)=\cT_w=g_w(\cT)$ with $w\in
\A^*$ (see \eqref{eq:def_Tw}). Since $F^n$ is a homeomorphism, we have
\begin{equation*}
  \partial \cT_w
  =
  \partial F^n(X)
  =
  F^n(\partial X)
  =
  F^n(\{p,q\}). 
\end{equation*}
In particular, $\cT_w=g_w(\cT)$ has two boundary points and so
$F^n(\{p,q\})=\partial \cT_w=\{g_w(-1), g_w(1)\}$ by
Lemma~\ref{lem:bdry}~\ref{item:bdry1}.  We may assume that
\begin{equation}
  \label{eq:Fngw}
  F^n(p)=g_w(-1)
  \text{ and }
  F^n(q)=g_w(1).
\end{equation}

By our assumptions the open arc $(p,q)$ contains at least three points in $\V_X$ (see
Proposition~\ref{prop:decomp}~\ref{item:decomp4}). In particular,
$p$ and $q$ lie in distinct tiles $P,Q \in \X^{n+1}$,
respectively, with $P,Q\sub X$. These are leaf-tiles in
$\X_X$. Indeed, $P$ has exactly two boundary points in $T$,
namely $p$ and one boundary point in $\V_X$ (if not, then $X=P$,
which is impossible since $\V_X\ne \emptyset$). It follows that the relative boundary of
$P$ in $X$, which is a subset of $\V_X$, contains precisely one
point, and so $P$ is a leaf-tile in $\X_X$. Similarly, $Q$ is a
leaf-tile in $\X_X$.

Therefore, we may apply Lemma~\ref{lem:decomp} to the tree $X$,
marked by $p$ and $q$ (which are leaves of $X$), and the set
$\V_X$. Thus there exists a tile-homeomorphism $h_X\: X\ra \cT$
for $\X_X$ with $h_X(p) =-1$ and $h_X(q)=1$ that satisfies
\ref{item:T_fin_decomp3} and \ref{item:T_fin_decomp4} in 
Lemma~\ref{lem:decomp} as well.

We now define $F_X\: X\ra F^n(X)=g_w(\cT)=\cT_w$ as
$F_X=g_w\circ h_X$. This is a homeomorphism of $X$ onto
$F^n(X)$. Since $g_w$ sends tiles of $\cT$ to tiles of $\cT$, we
have \ref{item:FX1}.
 
Clearly, we have  $F_X(X)=F^n(X)$ as well. Moreover, 
\begin{equation*}
  F_X(p)
  =
  g_w(h_X(p))
  =
  g_w(-1)
  =
  F^n(p),
\end{equation*}
by \eqref{eq:Fngw} and similarly $F_X(q)=F^n(q)$. Since
$\partial X=\{p,q\}$, we have shown \ref{item:FX2}.

Recall that $2\leq \#\V_X\leq N$.
Thus, using Lemma~\ref{lem:decomp}~\ref{item:T_fin_decomp4}, we
obtain for any $Y\in \X_X$ that
\begin{equation*}
  1
  \le
  \ell(h_X(Y))
  \le
  \#\V_X
  \le
  N.
\end{equation*}
Together with
\begin{align*}
\ell(F_X(Y))&= \ell(g_w(h_X(Y)))=\ell(w) +\ell(h_X(Y))\\
& =
\ell (F^n(X))+\ell(h_X(Y)), 
\end{align*}
this implies \ref{item:FX3}.

Suppose  $Y\in \X_X$ and $Y\cap\partial X\ne \emptyset$.
Then $Y$
contains one of the marked leaves  $p$ or $q$ of $X$; so $Y$ is identical with one of the tiles $P$ or $Q$, because each leaf of $X$ is  contained in unique tile in $\X_X$ as follows from Lemma~\ref{lem:vt}~\ref{item:vt5}.  Since 
  $h_X$
satisfies condition~\ref{item:T_fin_decomp3} in  Lemma~\ref{lem:decomp}, we have 
$\ell(h_X(Y))=2$. Thus
\begin{equation*}
  \ell(F_X(Y))
  =
  \ell (F^n(X))+\ell(h_X(Y))
  =\ell (F^n(X))+2,
\end{equation*}
and  \ref{item:FX4} follows. We have constructed
$F_X$, satisfying \ref{item:FX1}--\ref{item:FX4}, when $X\in
\X^n$ is an edge-tile. 
\smallskip

\emph{Case~2:} $X$  is a leaf-tile.
Then $\#\partial X=1$, say $\partial X=\{p\} \sub \V^n$. Again
there exists $w\in \A^*$ with $F^n(X)=\cT_w=g_w(\cT)$. Then
$\partial \cT_w=F^n(\partial X)= \{F^n(p)\}$ is a singleton set
and so $\partial \cT_w=\{g_w(-1)\}$ or $\partial \cT_w=\{g_w(1)\}$
by Lemma~\ref{lem:bdry} \ref{item:bdry1}.

The point $p$ is a leaf of $X$. Moreover, it is contained in a
tile $P\in \X_X$ which is a leaf-tile in $\X_X$ (by the exact
same argument used when $X$ is an edge-tile). We
know  $\#\V_X \geq 2$ and that the decomposition $\X_X$ of $X$ induced by
$\V_X$ is edge-like.

Thus we may apply Lemma~\ref{lem:decomp} to the tree $X$ marked
by its leaf $p$ and the  decomposition $\X_X$ of $X$ induced by the set $\V_X$. So there
exists a tile-homeomorphism $h_X\: X\ra \cT$ for $\X_X$ satisfying 
the (applicable) conditions  in
Lemma~\ref{lem:decomp}. We may normalize $h_X$ so that 
\begin{equation*}
  h_X(p)=-1 \text{ if } \partial \cT_w=\{g_w(-1)\},
  \,\,\text{or}\,\,
  h_X(p)=1 \text{ if } \partial \cT_w=\{g_w(1)\}.
\end{equation*}

The homeomorphism  $F_X\colon X\to F^n(X)=g_w(\cT)=\cT_w$ is now defined by
$F_X=g_w\circ h_X$ as before. Then again \ref{item:FX1} and the
first part of \ref{item:FX2} are true. The normalization of
$h_X$ ensures that $F_X$ maps $p$ to  the unique
 point in $\partial \cT_w$. Since $F^n$ does the same, we have
$F^n(p)=F_X(p)$, which is the second part of \ref{item:FX2},
because $\partial X=\{p\}$.

Finally, statements~\ref{item:FX3} and \ref{item:FX4} follow from  similar arguments as in the
case of an edge-tile $X$.  For  \ref{item:FX4} one uses the fact 
that  $\ell(h_X(P))=2$ by
Lemma~\ref{lem:decomp}~\ref{item:T_fin_decomp2}.

This finishes the construction of the homeomorphism
$F_X\: X\ra F^n(X)$ with the properties
\ref{item:FX1}--\ref{item:FX4} for a leaf-tile $X\in \X^n$. Hence
we have constructed such $F_X$ for every tile $X\in \X^n$. 

\smallskip
We now define $F^{n+1}\: T\ra \cT$ by setting
$F^{n+1}(z)= F_X(z)$ if $z\in X\in \X^n$. Note that the tiles
$X\in \X^n$ cover $T$, and so each point $z\in T$ lies in one of
the tiles $X\in \X^n$.  Moreover, $F^{n+1}$ is well-defined;
indeed, suppose that $z\in X\cap Y$ with $X,Y\in \X^n$, $X\ne
Y$. Then $z\in \partial X\cap \partial Y$ (see Lemma~\ref{lem:vt}~\ref{item:vt5}), and so \ref{item:FX2}
shows that
\begin{equation*}
  F_X(z)=F^n(z)=F_Y(z).
\end{equation*}

The map $F^{n+1}$ is continuous, because $F^{n+1}|X=F_X$ is
continuous for each tile $X\in \X^n$ and these finitely many tiles cover $T$.
The map $F^{n+1}$ is also surjective, because
\begin{equation*}
  F^{n+1}(T)
  =
  \bigcup_{X\in \X^n}F_X(X)
  =
  \bigcup_{X\in \X^n}F^n(X)
  =
  F^n(T)=\cT. 
\end{equation*}
To show that it is injective, suppose that $x,y\in T$ and
$F^{n+1}(x)=F^{n+1}(y)$. Then there exist $X,Y\in \X^n$ such that
$x\in X$ and $y\in Y$. Now if $X=Y$ and so $x,y\in X$, then $x=y$, because 
$F^{n+1}|X=F_X$ 
is injective. 

 If
$X\ne Y$, then
\begin{equation*}
  u\coloneqq F^{n+1}(x)\sub F^{n+1}(X)=F_X(X)=F^n(X)
\end{equation*}
and similarly, $u=F^{n+1}(y)\in F^n(Y)\ne F^n(X)$.  So $u$ lies
in different tiles of the decomposition of $\cT$ by the set of
tiles $F^n(\X^n)$. Then
\begin{equation*}
  u\in \partial F^n(X)\cap  \partial F^n(Y)
  =
  F^n(\partial X\cap \partial Y). 
\end{equation*}
It follows that there exists a point
$v\in \partial X \cap \partial Y\sub \V^n$ with $F^n(v)=u$.  Then
by \ref{item:FX2} we have
\begin{equation*}
  F^{n+1}(v)=F_X(v)=F^n(v)=u=F^{n+1}(x).
\end{equation*}
Since $v,x\in X$, and $F^{n+1}|X=F_X$ is injective, we see that
$x=v$, Similarly, $y=v$, and so $x=v=y$, as desired.
 
We have shown that $F^{n+1}\: T\ra \cT$ is a continuous
bijection. Since $T$ is compact, it follows that
$F^{n+1}\: T\ra \cT$ is a homeomorphism. It remains to verify
conditions \ref{item:Fn1}--\ref{item:Fn4} for $n+1$.
 
First, if $Y\in \X^{n+1}$ is arbitrary, then there exists a
unique tile $X\in \X^n$ such that $Y\sub X$. It now follows from
\ref{item:FX1} that $F^{n+1}(Y)$ is a tile of $\cT$, because
$ F^{n+1}(Y)=F_X(Y)$. So we have \ref{item:Fn1}.
 
Condition~\ref{item:Fn2} is also clear, because if $X\in \X^n$ is
arbitrary, then by \ref{item:FX2} we have
\begin{equation*}
  F^n (X)
  =
  F_X(X)
  =
  F^{n+1}(X).
\end{equation*}

Finally, \ref{item:Fn3} and \ref{item:Fn4} (for $n+1$)
immediately follow from \ref{item:FX3} and
\ref{item:FX4}, respectively,  and the definition of $F^{n+1}$.
 
So we obtain a homeomorphism $F^{n+1}$ as desired, and the
inductive step is complete. We conclude that there exists a
sequence $\{F^n\}$ of maps with the  properties \ref{item:Fn1}--\ref{item:Fn4}.

\smallskip
Having completed the construction of the sequence $\{F^n\}$, we now show that
$\{F^n(\X^n)\}$ is a quasi-visual subdivision of $\cT$
isomorphic to $\{\X^n\}$. 

\smallskip
{\em $\{F^n(\X^n)\}$ is a subdivision of $\cT$.}
We know that the sequence $\{\X^n\}$ is a subdivision of the space $T$ as in
Definition~\ref{def:subdiv}. To see that $\{F^n(\X^n)\}$ is a
subdivision of $\cT$, first note that for each $n\in \N_0$ the
map $F^n$ is a homeomorphism by \ref{item:Fn1}. Thus, the
set  $F^n(\X^n)=\{F^n(X): X\in \X^n\}$ is a finite
collection of compact subsets of $\cT$ that cover $\cT$. We have 
that $\{F^0(\X^0)\}=\{\cT\}$. It follows  that the condition in 
Definition~\ref{def:subdiv}~\ref{item:subdiv1} is true.  
 
Let $n\in \N_0$. If $Y'\in F^{n+1}(\X^{n+1})$ is arbitrary, then
$Y'=F^{n+1}(Y)$ for some $Y\in \X^{n+1}$.  Then there exists
$X\in \X^n$ with $Y\sub X$ and so
\begin{equation*}
  Y'=F^{n+1}(Y)\sub F^{n+1}(X)=F^n(X)\in F^n(\X^n),
\end{equation*}
where \ref{item:Fn2} was used. Thus the condition in Definition~\ref{def:subdiv}~\ref{item:subdiv2} holds.

Finally, if $X'\in F^n(\X^n)$ is arbitrary, then $X'=F^n(X)$ with $X\in \X^n$. Since
$$ X=\bigcup \{Y\in \X^{n+1}: Y\sub X\}, $$ 
it follows that $X'$ can be written as a union of sets in  
$F^{n+1}(\X^{n+1})$, because (using \ref{item:Fn2} once more)
\begin{equation*}
  X'
  =
  F^n(X)
  =
  F^{n+1}(X)
  =
  \bigcup \{F^{n+1}(Y): Y \in \X^{n+1},\,  Y\sub X\}.
\end{equation*}
We see that the condition in  Definition~\ref{def:subdiv}~\ref{item:subdiv3} is also true, and  conclude that  $\{F^n(\X^n)\}$ is indeed a subdivision of $\cT$. 

\smallskip
{\em The subdivisions  $\{\X^n\}$ and $\{F^n(\X^n)\}$ are
  isomorphic.} We claim that the bijective correspondence given
by $X\leftrightarrow F^n(X)$ for  $X\in \X^n$ on each level
$n\in \N_0$ is an isomorphism  of $\{\X^n\}$ and $\{F^n(\X^n)\}$.

To see this,  fix $n\in \N_0$, and  let $X,Y\in \X^n$ be arbitrary. Then 
\begin{equation*}
  X\cap Y\neq \emptyset
  \quad
  \text{if and only if}
  \quad
  F^n(X)\cap F^n(Y)\neq \emptyset,
\end{equation*}
since $F^n$ is a homeomorphism by \ref{item:Fn1}. Thus
\eqref{eq:isosubdiv_cap} is satisfied. 
Moreover, let $X\in \X^n$ and $X'\in \X^{n+1}$ be arbitrary. Then
$F^{n+1}(X) = F^n(X)$ by \ref{item:Fn2}. Thus
\begin{equation*}
  X'\subset X
  \quad
  \text{if and only if}
  \quad
  F^{n+1}(X') \subset F^{n+1}(X) = F^n(X),
\end{equation*}
since $F^{n+1}$ is a homeomorphism. We have shown
\eqref{eq:isosubdiv_incl}; so $\{\X^n\}$ and $\{F^n(\X^n)\}$
are indeed isomorphic.

\smallskip  
{\em $\{ F^n(\X^n)\}$ is a quasi-visual subdivision of $\cT$}.
Since we already know that $\{F^n(\X^n)\}$ is a
subdivision of $\cT$, we have to show that $\{ F^n(\X^n)\}$ is a
quasi-visual approximation of $\cT$. For this we will verify 
conditions \ref{item:qv_approx1}--\ref{item:qv_approx4} in
Definition~\ref{def:qv_approx}.

To see \ref{item:qv_approx1}, let $n\in \N_0$ be arbitrary and
consider two tiles in $F^n(\X^n)$ with non-empty
intersection. These tiles can be represented in the form
$F^n(X)$ and $F^n(Y)$ with $X,Y\in \X^n$, where
$X\cap Y\ne \emptyset$. We want to show that $\diam(F^n(X))\asymp
\diam(F^n(Y))$, with a constant $C(\asymp)$ independent of $n$, 
$X$, and
$Y$. 

We can find unique tiles $X^i, Y^i\in \X^i$ for
$i=0, \dots, n$ with
\begin{align*}
  &X=X^n\sub X^{n-1} \sub  \dots \sub X^0=T
    \text{ and }\\
  &Y =Y^n\sub Y^{n-1} \sub  \dots \sub Y^0=T.
\end{align*}
Let $k\in \N_0$, $0\le k\le n$,  be the largest number with $X^k=Y^k$. If $k=n$,
there is nothing to prove, because then $X=X^n=Y^n=Y$, and so  
$\diam(F^n(X))= \diam(F^n(Y))$.

Otherwise, $0\le k<n$, and $X^{k+1}, Y^{k+1}\subset X^k=Y^k$ with
$X^{k+1}\neq Y^{k+1}$. Let $L\coloneqq\ell(F^k(X^k))$. By \ref{item:Fn3} we know that
\begin{align}
  \label{eq:ell_est1}
  L+1
  \le &  
  \ell(F^{k+1}(X^{k+1}))
  \le 
  L+N,
        \text{ and similarly, }
  \\
  \notag
  L +1
  \le&
       \ell(F^{k+1}(Y^{k+1}))
       \le
       L+N. 
\end{align}

Note that $X^i\cap Y^i\supset X^n\cap Y^n \neq \emptyset$ and
$X^i\neq Y^i$ for $i=k+1,\dots, n$. By Lemma~\ref{lem:vt}~\ref{item:vt5} the tiles  $X^i$ and $Y^i$ intersect  in a
single point, which is the unique point $v\in X^n\cap Y^n$. Moreover,   $v\in \partial X^i\cap  \partial Y^i$ for $i=
k+1,\dots, n$. 

Then  \ref{item:Fn4} implies that
\begin{align*}
  \ell(F^n(X))
  &=
  \ell(F^{n-1} (X^{n-1})) + 2
  =
    \dots
    \\
  &=
  \ell(F^{k+1} (X^{k+1})) + 2(n-k-1),
  \intertext{and similarly}
  \ell(F^n(Y))
  &=
  \ell(F^{k+1}(Y^{k+1})) + 2(n-k-1).
\end{align*}
Combining this  with \eqref{eq:ell_est1}, we see  that
\begin{align*}
  \abs{\ell (F^n(X))-\ell(F^n(Y))}
  &=
  \abs{\ell(F^{k+1}(X^{k+1}))- \ell(F^{k+1}(Y^{k+1}))}\\
  &\le
  N-1.  
\end{align*}
Hence
\begin{equation*}
  \diam (F^n(X))=2^{-\ell(F^n(Y))+1}
  \asymp
  2^{-\ell(F^n(X))+1} =\diam (F^n(Y))
\end{equation*}
with $C(\asymp)=2^{N-1}$. Since $N$ is a fixed number, condition
\ref{item:qv_approx1} in Definition~\ref{def:qv_approx} follows.

To verify condition~\ref{item:qv_approx2}  in Definition~\ref{def:qv_approx}, let $n\in \N_0$ and 
$X',Y'\in F^n(\X^n)$  with
$X'\cap Y'=\emptyset$ be arbitrary. We choose points  $x\in X'$ and $y\in Y'$ 
with
\begin{equation*}
  \abs{x-y}
  =
  \dist(X', Y'),
\end{equation*}
and  consider the unique arc $[x,y]$
joining $x$ and $y$ in $\cT$.  Then the points in
$F^n(\V^n)$ that lie on $[x,y]$ decompose this arc into
non-overlapping subarcs that lie in distinct tiles of the edge-like 
decomposition $F^n(\X^n)$ of $\cT$. At least one of these
subarcs must have interior disjoint from $X'$ and $Y'$, because
$X'\cap Y'=\emptyset$. Let $\alpha $ be the first of these arcs
as we travel along $[x,y]$ from $x$ to $y$, say
$\alpha =[p,q]\sub [x,y]$.  Then there exists a tile
$Z'\in F^n(\X^n)$ such that $\alpha\sub Z'$. Obviously,
$Z'\ne X',Y'$. The two endpoints $p$ and $q$ of $\alpha$ belong
to $\partial Z'$, and so $Z'$ is an edge-tile. Moreover, one of
these endpoints also belongs to $\partial X'$, and so
$X'\cap Z'\ne \emptyset$.

The set $Z'$ is a tile of $\cT$, and so it can be written in the form $Z'=g_w(\cT)$
with $w\in \A^*$. By Lemma~\ref{lem:bdry}~\ref{item:bdry1} we have 
$\partial Z'=\partial g_w(\cT)=\{g_w(-1), g_w(1)\}=\{p,q\}$. 
It follows that
\begin{equation*}
  \abs{p-q}
  =
  \abs{g_w(-1)-g_w(1)}
  =
  2^{-\ell(w)+1}
  =
  \diam (g_w(\cT))
  =
  \diam(Z'). 
\end{equation*}
By condition~\ref{item:qv_approx1} in
Definition~\ref{def:qv_approx}, which we proved already, we also
have $ \diam(X') \asymp \diam(Z')$. Combining this with the
quasi-convexity of $\cT$ (see~\eqref{eq:cT_geod}), we arrive at
\begin{align*}
  \dist(X',Y')
  &=
    \abs{x-y}
    \asymp
    \length [x,y]
    \ge
    \length [p,q]\\
  &\ge \abs{p-q}
    =
    \diam (Z')
    \asymp
    \diam(X').
\end{align*}
Since the implicit multiplicative constants here are independent
of $n$, $X'$, and $Y'$, condition \ref{item:qv_approx2} in
Definition~\ref{def:qv_approx} follows.

To verify the remaining conditions, suppose $k,n\in \N_0$,
$X'\in F^n(\X^n)$, $Y'\in F^{n+k} (\X^{n+k})$, and $X'\cap Y'\ne \emptyset$. 
There exists a 
tile $Z'\in F^n(\X^n)$ with $Y'\sub Z'$. Then 
$X'\cap Z'\ne \emptyset$, and so $\diam(X')\asymp
\diam(Z')$ by what we have seen.

If $k=1$, then it follows from  \ref{item:Fn3} 
that
\begin{equation*}
  \ell(Z')+1\le \ell(Y')\le \ell(Z')+N.
\end{equation*}
Since $\diam(Y')=2^{-\ell(Y')+1}$ 
and $\diam(Z')=2^{-\ell(Z')+1}$, we conclude  that
\begin{equation*}
  \diam(Y')\asymp \diam(Z')\asymp \diam(X')  
\end{equation*}
with implicit constants independent of $n$ and the tiles.
Condition~\ref{item:qv_approx3} in Definition~\ref{def:qv_approx} follows.

For arbitrary $k\ge 0$ note that 
repeated application of   \ref{item:Fn3} leads to 
$$  \ell(Y')\ge \ell(Z')+k. $$
This gives 
\begin{align*} \diam(Y')&=2^{-\ell(Y')+1}\le 2^{-k}  2^{-\ell(Z')+1}
=2^{-k} \diam (Z')\\ &\asymp 2^{-k} \diam(X').
\end{align*} 
Again the implicit constants here are independent of $k$, $n$, and the tiles. Condition~\eqref{eq:qv_approx4p} in 
 Lemma~\ref{lem:sub_shrink} follows from this, which, in our setting,  is equivalent 
 to condition~\ref{item:qv_approx4} in
 Definition~\ref{def:qv_approx}. We have shown that
 $\{F^n(\X^n)\}$ is a quasi-visual subdivision of $\cT$ as in
 Definition~\ref{def:qvsub}. 
 
%

\smallskip
Now define $\Y^n=F^n(\X^n)$ for $n\in \N_0$. 
Then our previous considerations show that 
$\{\Y^n\} =\{F^n(\X^n)\}$ a quasi-visual subdivision  of $\cT$ isomorphic to $\{\X^n\}$.  The statement follows.
\end{proof}

\section{Proof of Theorem~\ref{thm:CSST_qs}.}
\label{sec:proof}

 We are now ready to prove Theorem~\ref{thm:CSST_qs}.
  Assume first that the metric space $T$ is
  quasisymmetrically equivalent to the continuum self-similar tree  $\cT$. In
  particular, $T$ is homeomorphic to $\cT$. Then   $T$ is a trivalent metric tree,
  because $\cT$ is of this type. 

 The CSST $\cT$ is doubling as a subset of $\C$, and of bounded turning, because 
 it is even quasi-convex. Since a  quasisymmetry preserves the doubling property (see
  \cite[Theorem~10.18]{He}) and the bounded turning
  property (see \cite[Theorem~2.11]{TV}), the tree   $T$ is doubling and of bounded turning as well. So $T$ is a quasiconformal tree. 
  
  We know from   Proposition~\ref{prop:CSST_unif} that $\cT$ is uniformly
  branching. It follows from Lemma~\ref{lem:unif_sepa_qs} and
  Lemma~\ref{lem:unif_dense_qs} that $T$ is also uniformly
  branching. The first implication of the statement follows.

  Conversely, assume that  $T$ is a trivalent quasiconformal tree that is  
   uniformly branching. By rescaling $T$ is necessary, we may assume that $\diam(T)=1$. Then by Proposition~\ref{eq:defvn} we can choose  $\delta>0$ small enough so that the subdivision $\{\X^n\}$ induced by the sets $\V^n$ as defined in 
   \eqref{eq:defvn} has the stated  
    properties
  in this proposition.  In particular, $\{\X^n\}$ is a quasi-visual 
   subdivision of $T$.    By    Proposition~\ref{prop:exqs}  there exists a quasi-visual subdivision $\{\Y^n\}$ of $\cT$ isomorphic
   to $\{\X^n\}$.  
 By Proposition~\ref{prop:qv_f_qs} there exists a quasisymmetry 
 $F\: T\ra \cT$ that induces the isomorphism 
 between $\{\X^n\}$ and $\{\Y^n\}$; so $T$ and $\cT$ are quasisymmetrically equivalent. 
 The proof is complete.

\section{Concluding  remarks and open problems}
\label{sec:comments}

An important numerical invariant for the quasiconformal geometry
of a metric space $X$  is its  {\em conformal dimension}  $\operatorname{confdim}(X)$ 
defined as the infimum of all Hausdorff dimensions $\dim_H (Y)$ of metric
spaces $Y$ that are quasisymmetrically equivalent to $X$
 (see  \cite{MT10} for more background on this concept). 
It is known that for every quasiconformal tree $T$ we have  
$\operatorname{confdim}(T)=1$ (see  \cite[Proposition 2.4]{Kin}
and \cite[Corollary 1.4] {BM19}). In particular, for the CSST $\cT$ we have 
$\operatorname{confdim}(\cT)=1$. One can show though that the infimum underlying the definition of  the conformal dimension is not achieved here as a minimum: if  $T$ is any tree that is quasisymmetrically equivalent to $\cT$, then actually 
$\dim_H(T)>1$. This follows from \cite[Theorem 1.6]{Az15}.

The CSST is related to various other 
metric trees appearing in probabilistic models or in complex dynamics. One of these objects is the 
 {\em (Brownian) continuum random tree} (CRT) as introduced by Aldous \cite{Al91}. One can define it as a random quotient space
 of the unit interval $[0,1]$ as follows (see 
 \cite[Corollary 22]{Al93}). We consider a sample of Brownian excursion $e=(\be(t))_{0\leq t\leq 1} $ on the interval $[0,1]$. For $s,t\in [0,1]$, we set 
$$d_{\be}(s,t) = \be(s) + \be(t) - 2 \inf\{ \be(r): \min(s,t)\le r \le 
\max(s,t)\}.$$ Then $d_{\be}$ is a pseudo-metric on $[0,1]$. 
We define an equivalence relation $\sim$  on $[0,1]$ by setting $s\sim t$ if $d_{\be}(s,t) = 0$. Then $d_{\be}$ descends to a metric on  the quotient space $T_{\be}=[0,1]/\sim$. The metric space 
$T=(T_{\be}, d_{\be})$
  is almost surely a metric tree (see \cite[Sections~2 and 3]{LeGall}). Moreover, it  is known that a sample $T$ of the  CRT is almost surely homeomorphic to the CSST (see
   \cite[Corollary 1.9] {BT}).  

Roughly speaking, this means that  the topology of the CRT is completely understood. 
One can ask whether one can say more about the geometry of the CRT, in particular its quasiconformal geometry.  A natural question is whether a sample $T$ of the CRT is  almost surely quasisymmetrically equivalent to the CSST. Actually, this is not true, because almost surely  a sample $T$ of the CRT is neither doubling  nor has uniformly relatively separated branch points, which are necessary conditions for a tree to be quasisymmetrically equivalent to the CSST. So even though the CSST  serves as a natural deterministic model for the topology of the CRT, it is not so clear whether there is deterministic model for its quasiconformal geometry. One can ask though whether  the quasiconformal geometry of the CRT is uniquely determined almost surely. This leads to the following question. 

\begin{problem} Suppose $S$ and $T$ are two independent samples of the CRT. Are $S$ and $T$ almost surely quasisymmetrically equivalent?
\end{problem}

This seems to be a difficult problem, because the techniques developed in Section~\ref{sec:appr-subd} do not apply here. 

Since the CRT is almost surely homeomorphic, but not
quasisymmetrically equivalent to the CSST, one can ask if they are
equivalent with respect to another notion 
(stronger than topological equivalence, but weaker than
quasisymmetric equivalence). A closely related result was recently obtained by Lin
and Rohde. They show that almost surely the CRT can be represented by a quasiconformal tree in the plane obtained from a (random)  conformal welding (see \cite{LR19}).

An important source of trees is given by  Julia sets of 
postcritically-finite polynomials without  periodic critical points in $\C$.  For example, $P(z)=z^2+i$ is such a polynomial and one can show based on the topological characterization of the CSST given in \cite{BT} that  its Julia set $\mathcal{J}(P)$  is homeomorphic to 
the CSST. One can use Theorem~\ref{thm:CSST_qs} to show the stronger statement that $\mathcal{J}(P)$ is actually quasisymmetrically equivalent to the CSST. For some related results about quasisymmetric equivalence of Julia sets and attractors of iterated functions systems see \cite{ERS10}. 

The CSST has some interesting universality properties. For example, it is not hard to see that every trivalent tree with only finitely  many branch points admits a topological embedding 
into the CSST. One can ask similar questions for quasisymmetric embeddings (i.e., quasisymmetric homeomorphisms onto the image of the map).  

\begin{problem}
  Suppose $T$ is a trivalent quasiconformal tree with uniformly
  relatively separated branch points. Does there exist a
  quasisymmetric embedding $F\: T\ra \cT$?  
\end{problem}

There are several other  ``model trees'' whose characterization up to quasissymmetric equivalence may be interesting. For example, 
the abstract construction of the CSST as outlined in the introduction can easily be modified to obtain canonical trees $\cT_m$ with branch points of a fixed  higher order $m\in \N$, $m\ge 3$: 
instead of gluing  one line segment  of length $2^{-n}$
to the midpoint $c_s$ of a line segment $s$ of length $2^{1-n}$ obtained in the $n$-th step, we glue an  endpoint of  
$m-2$ such segments  to $c_s$. Of course, $\cT=\cT_3$.

 By a modification of this procedure, one can actually define  a
 tree $\cT_\infty$ all of whose branch points have
 infinitely many branches.
 For the construction of  $\cT_\infty$ we again  start with a line segment $J_0$ of length $2$.  The midpoint $c$ of $J_0$ divides $J_0$ into two line segments of length $1$. We glue to $c$  one of the  endpoints of  line segments of  lengths $2^{-0}, 2^{-1}, 2^{-2}, \ldots\,.$ We equip the resulting space $J_1$ with the obvious path metric. Then $J_0\sub J_1$.  We repeat this construction for each of the line segments obtained in this way,  that is, a line segment $s$ of length $L$ is divided into two at its midpoint $c_s$ and segments of lengths $2^{-1}L, 2^{-2}L, 2^{-3}L, \ldots$ are glued in at $c_s$. Proceeding in this way, we obtain an ascending  sequence of compact geodesic metric spaces $J_0\sub J_1\sub \dots  \,. $ Then $J=\bigcup_{n\in \N_0}J_n$ carries a geodesic   metric that agrees with the metric on $J_n$ for each $n\in \N_0$. Now  $\cT_\infty$ is defined of the completion of $J$ with respect to this metric.  
 
 One can show that $\cT_\infty$ 
 is  a quasiconformal tree; in particular, it is  doubling. To guarantee this property,  it is important that length of the segments  segments glued to a midpoint $c_s$ of a segment $s$ of length $L$ as above decrease at a geometric rate.  Gluing segments of lengths $L/2, L/3, L/4, \dots$, for example,  would not result in a doubling space. 
 
The tree $\cT_\infty$ admits a topological characterization similar 
to the CSST: a metric tree $T$ is homeomorphic to $\cT_\infty$ if and only if branch points are dense in $T$ and each branch point of $T$ has infinitely many branches (this follows from results in a   previous version of \cite{BM} that is available on arXiv).

Moreover, $\cT_\infty$ has an interesting topological universality property: every metric tree $T$ admits a topological embedding 
 $F\: T\ra\cT_\infty$  (see \cite[Section 10.4]{Na}).

 It would be interesting to establish a similar universality property 
 for the quasiconformal geometry of $\cT_\infty$. The following question seems very natural in this context. 
 
 \begin{problem}
   Suppose $T$ is a quasiconformal tree with uniformly relatively
   separated branch points. Is there a quasisymmetric
   embedding $F\: T\ra \cT_\infty$?
\end{problem}

\end{document}